\def\ge{\geqslant}
\def\a{\alpha}
\def\d{\delta}
\def\e{\epsilon}
\def\s{\sigma}
\def\t{\tau}
\def\l{\lambda}
\def\i{^{-1}}
\def\<{\langle}
\def\>{\rangle}
\newcommand{\fkK}{\ensuremath{\mathfrak{K}}\xspace}
\newcommand{\fkR}{\ensuremath{\mathfrak{R}}\xspace}
\newcommand{\BC}{\ensuremath{\mathbb {C}}\xspace}
\newcommand{{\BG}}{\ensuremath{\mathbb {G}}\xspace}
\newcommand{{\BK}}{\ensuremath{\mathbb {K}}\xspace}
\newcommand{\BN}{\ensuremath{\mathbb {N}}\xspace}
\newcommand{\BQ}{\ensuremath{\mathbb {Q}}\xspace}
\newcommand{\BR}{\ensuremath{\mathbb {R}}\xspace}
\newcommand{\BZ}{\ensuremath{\mathbb {Z}}\xspace}
\newcommand{\CB}{\ensuremath{\mathcal {B}}\xspace}
\newcommand{\CI}{\ensuremath{\mathcal {I}}\xspace}
\newcommand{\CK}{\ensuremath{\mathcal {K}}\xspace}
\DeclareMathOperator{\End}{End}
\DeclareMathOperator{\Gal}{Gal}
\let\Im\relax
\DeclareMathOperator{\Im}{Im}
\DeclareMathOperator{\Tr}{Tr}
\newcommand{\red}{\ensuremath{\mathrm{red}}\xspace}
\newcommand{\rs}{\ensuremath{\mathrm{rs}}\xspace}
\DeclareMathOperator{\tr}{Tr}
\def\tW{\tilde W}
\def\cat{\mathsf{cat}}
\def\spe{\mathsf{sp}}
\def\el{\mathsf{ell}}
\def\rig{{\rm rig}}
\def\red{\mathsf{red}}
\def\good{\mathsf{good}}
\def\Rk{\mathsf{Rk}}
\newtheorem{theorem}{Theorem}
\newtheorem{proposition}[theorem]{Proposition}
\newtheorem{lemma}[theorem]{Lemma}
\newtheorem{theoremA}{Theorem}[subsection]
\newtheorem{corollary}[theorem]{Corollary}
\theoremstyle{definition}
\newtheorem{remark}[theorem]{Remark}
\numberwithin{equation}{section}
\numberwithin{theorem}{section}
\renewcommand{\to}{%
   \ifbool{@display}{\longrightarrow}{\rightarrow}%
   }
\let\shortmapsto\mapsto
\renewcommand{\mapsto}{%
   \ifbool{@display}{\longmapsto}{\shortmapsto}%
   }
\newlength{\olen}
\newlength{\ulen}
\newlength{\xlen}
\newcommand{\xra}[2][]{%
   \ifbool{@display}%
      {\settowidth{\olen}{$\overset{#2}{\longrightarrow}$}%
       \settowidth{\ulen}{$\underset{#1}{\longrightarrow}$}%
       \settowidth{\xlen}{$\xrightarrow[#1]{#2}$}%
       \ifdimgreater{\olen}{\xlen}%
          {\underset{#1}{\overset{#2}{\longrightarrow}}}%
          {\ifdimgreater{\ulen}{\xlen}%
             {\underset{#1}{\overset{#2}{\longrightarrow}}}
             {\xrightarrow[#1]{#2}}}}%
      {\xrightarrow[#1]{#2}}
   }
\newcommand{\xyra}[2][]{%
   \settowidth{\xlen}{$\xrightarrow[#1]{#2}$}%
   \ifbool{@display}%
      {\settowidth{\olen}{$\overset{#2}{\longrightarrow}$}%
       \settowidth{\ulen}{$\underset{#1}{\longrightarrow}$}%
       \ifdimgreater{\olen}{\xlen}%
          {\mathrel{\xymatrix@M=.12ex@C=3.2ex{\ar[r]^-{#2}_-{#1} &}}}%
          {\ifdimgreater{\ulen}{\xlen}%
             {\mathrel{\xymatrix@M=.12ex@C=3.2ex{\ar[r]^-{#2}_-{#1} &}}}
             {\mathrel{\xymatrix@M=.12ex@C=\the\xlen{\ar[r]^-{#2}_-{#1} &}}}}}%
      {\mathrel{\xymatrix@M=.12ex@C=\the\xlen{\ar[r]^-{#2}_-{#1} &}}}%
   }
\newcommand{\xla}[2][]{%
   \ifbool{@display}%
      {\settowidth{\olen}{$\overset{#2}{\longleftarrow}$}%
       \settowidth{\ulen}{$\underset{#1}{\longleftarrow}$}%
       \settowidth{\xlen}{$\xleftarrow[#1]{#2}$}%
       \ifdimgreater{\olen}{\xlen}%
          {\underset{#1}{\overset{#2}{\longleftarrow}}}%
          {\ifdimgreater{\ulen}{\xlen}%
             {\underset{#1}{\overset{#2}{\longleftarrow}}}
             {\xleftarrow[#1]{#2}}}}%
      {\xleftarrow[#1]{#2}}
   }
\newcommand{\isoarrow}{%
   \ifbool{@display}{\overset{\sim}{\longrightarrow}}{\xrightarrow\sim}%
   }
\begin{document}

\title{Cocenters of $p$-adic groups, III: Elliptic and rigid cocenters}

\date{\today}

\author{Dan Ciubotaru}
        \address[D. Ciubotaru]{Mathematical Institute, University of Oxford, Oxford, OX2 6GG, United Kingdom}
        \email{dan.ciubotaru@maths.ox.ac.uk}

\author{Xuhua He}
\address[X. He]{Department of Mathematics, University of Maryland, College Park, MD 20742 and Institute for Advanced Study, Princeton, NJ 08540, USA}
\email{xuhuahe@math.umd.edu}
\thanks{D.C. was partially supported by EPSRC EP/N033922/1. X. H. was partially supported by NSF DMS-1463852 and DMS-1128155 (from IAS)}

\begin{abstract}In this paper, we show that the elliptic cocenter of the Hecke algebra of a connected reductive $p$-adic group is contained in the rigid cocenter.  As applications, we prove the trace Paley-Wiener theorem and the abstract Selberg principle for mod-$l$ representations.
\end{abstract}

\maketitle


\section*{Introduction}

\subsection{} Let $F$ be a nonarchimedean local field of residual characteristic $p$. Let $\BG$ be a connected reductive group over $F$ and $G=\BG(F)$ be the group of $F$-points. The study of smooth admissible representations of $G$ is a major topic in representation theory. In particular, the representation theory over complex numbers is a part of the local Langlands program, and it has been a central area of research in modern representation theory. The $G$-representations over an algebraically closed field $R$ of characteristic $l \neq p$ (in short, the mod-$l$ representations) is also a natural object to study, and it has attracted considerable interest recently due to the applications to number theory, e.g., congruences of the modular forms and the mod-$l$ Langlands program. Several important progresses have been achieved in this direction, e.g. Vign\'eras \cite{Vig,Vig3} and Vign\' eras-Waldspurger \cite{VW}. However, less is known compared to the complex representations. One of the major difficulty is that the proofs of many key results for complex representations rely heavily on  harmonic analysis methods, which are not always available for mod-$l$ representations. 

The main purpose of this paper (as well as of the previous papers \cite{hecke-1}, \cite{hecke-2}) is to develop a new approach towards the (complex and mod-$l$) representation theory of $G$. The approach is based on the relation between the cocenter $\bar H_R=H_R/[H_R, H_R]$ of the Hecke algebra $H_R=H(G)_R$ over the field $R$ and the Grothendieck group $\fkR_R(G)$ of representations over $R$ via the trace map $$\Tr_R: \bar H_R \to \fkR_R(G)^*.$$ A detailed analysis on the cocenter side should lead to a deep understanding on the representation side. It is also worth mentioning that the study of the structure of the cocenter is influenced by, and relies on, certain recent developments in arithmetic geometry,  in particular, the work the theory of $\s$-isocrystals and  affine Deligne-Lusztig varieties. 

\subsection{} We explain the main results of this paper. 

Let $M$ be a standard Levi subgroup. On the representations side, there are two important functors: the parabolic induction functor $i_M: \fkR_R(M) \to \fkR_R(G)$ and the Jacquet functor $r_M: \fkR_R(G) \to \fkR_R(M)$. On the cocenter side, there are also two important maps: the induction map $\bar i_M: \bar H(M) \to \bar H(G)$ adjoint to the Jacquet functor and the restriction map $\bar r_M: \bar H(G) \to \bar H(M)$ adjoint to the parabolic induction functor. 

An important family of irreducible representations is formed by the elliptic representations, which can be viewed as the set of irreducible smooth representations that are not linear combinations of induced representations $i_M(\s)$ for proper Levi subgroups (in the Grothendieck group). The analogous notion on the cocenter side is the elliptic cocenter $\bar H_R^{\el}$, the subspace of $\bar H$ consisting of elements $f$ such that $\bar r_M(f)=0 \in \bar H(M)$. 

Another important subspace of the cocenter $\bar H_R$ is the rigid cocenter $\bar H^{\rig}_R$.   By definition, $\bar H^{\rig}_R$ consists of elements in the cocenter $\bar H_R$ that are represented by functions with support in the subset of compact-modulo-center elements of $G$. As explained in \cite[Theorem C]{hecke-2}, the rigid cocenters form the ``building blocks'' of the whole cocenter $\bar H_R$. We will also see later in Theorem \ref{thmC} that there is a close relation between the rigid cocenter and the finitely generated projective representations of $G$. 

The main result of this paper compares these two important subspaces of the cocenter. The study of the relation between the elliptic cocenter and the rigid cocenter (as well as the duality with relevant spaces of representations) is also motivated by the results for affine Hecke algebras obtained in \cite{CH}.

\begin{theoremA}[Theorem \ref{main}]\label{thmA}
The elliptic cocenter is contained in the rigid cocenter. 
\end{theoremA}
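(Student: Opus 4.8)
The plan is to run everything through the Newton decomposition of the cocenter established in \cite{hecke-1,hecke-2}. Recall that $\bar H_R=\bigoplus_v \bar H_{R,v}$, where $v$ ranges over the conjugacy classes of Newton points, where the summand attached to the central Newton point $v_0$ is exactly the rigid cocenter $\bar H_R^{\rig}$ (a class has central Newton point precisely when it is represented on the compact-mod-center locus), and where to each non-central $v$ is attached a \emph{proper} standard Levi $M_v=Z_\BG(v)$ together with a surjection $\bar i_{M_v}\colon \bar H(M_v)_{[v]}\twoheadrightarrow \bar H_{R,v}$, where $\bar H(M_v)_{[v]}$ denotes the part of $\bar H(M_v)$ lying over the $\BG$-conjugacy class of $v$. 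Thus, writing $f=\sum_v f_v$ for the Newton decomposition of a given $f\in\bar H_R^{\el}$, it suffices to prove that $f_v=0$ for every $v\neq v_0$.

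First I would record the weak compatibility of the restriction maps with the Newton stratifications: $\bar r_M(\bar H_{R,v})$ is contained in the sum of those $\bar H(M)_{v'}$ with $v'$ lying over $v$; this should follow from the description of $\bar r_M$ as a constant term together with the behaviour of the Newton point under passage to a Levi, cf. \cite{hecke-2}. In particular, if $v\neq v_0$ and $v'\neq v$ (as $\BG$-classes), the component of $\bar r_{M_v}(f_{v'})$ in $\bar H(M_v)_{[v]}$ vanishes, so that, writing $\mathrm{pr}$ for the projection of $\bar H(M_v)$ onto $\bar H(M_v)_{[v]}$, one has $\mathrm{pr}\,\bar r_{M_v}(f)=\mathrm{pr}\,\bar r_{M_v}(f_v)$. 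Since $f$ is elliptic the left-hand side is $0$, so the whole matter reduces to the following claim: for non-central $v$ the composite $\bar H_{R,v}\xrightarrow{\bar r_{M_v}}\bar H(M_v)\xrightarrow{\mathrm{pr}}\bar H(M_v)_{[v]}$ is injective.

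To prove the claim I would combine three ingredients. (i) By \cite{hecke-2}, $\bar i_{M_v}$ identifies $\bar H_{R,v}$ with the coinvariants $\bigl(\bar H(M_v)_{[v]}\bigr)_{W_v}$, where $W_v=N_G(M_v)/M_v$ is the relative Weyl group. (ii) A Mackey/Bernstein--Zelevinsky computation of $\bar r_{M_v}\bar i_{M_v}$: among the double-coset terms, those involving an intermediate Levi move into strata $v'\neq v$ and are annihilated by $\mathrm{pr}$, leaving $\mathrm{pr}\,\bar r_{M_v}\,\bar i_{M_v}=\sum_{w\in W_v}w$ on $\bar H(M_v)_{[v]}$. (iii) The action of $W_v$ on $\bar H(M_v)_{[v]}$ is a \emph{free} permutation action on the conjugacy classes in the relevant stratum: if $w\in W_v$ fixes such a class it fixes the corresponding fractional cocharacter (a $W_v$-translate of $v$), so any lift of $w$ lies in $Z_\BG(v)=M_v$, whence $w=1$. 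Consequently $\bar H(M_v)_{[v]}$ is a free $R[W_v]$-module, its coinvariants are computed by an exact functor, and the averaging map $\bigl(\bar H(M_v)_{[v]}\bigr)_{W_v}\to\bar H(M_v)_{[v]}$ is injective --- crucially, with no division by $|W_v|$, so the argument is insensitive to $\charac R=l$. Composing (i)--(iii) yields the injectivity, hence $f_v=0$, and the theorem follows.

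I expect the main obstacle to be ingredient (ii) together with the characteristic-$l$ aspect of (iii): one must pin down exactly which terms of $\bar r_{M_v}\bar i_{M_v}$ survive projection to the $v$-stratum and verify that the surviving part is the bare sum $\sum_{w\in W_v}w$, and one must then ensure that the passage from these coinvariants back into $\bar H(M_v)_{[v]}$ never invokes division by $|W_v|$ --- the freeness of the $W_v$-action on conjugacy classes being exactly what rescues the argument when $l\mid |W_v|$. Some additional bookkeeping is needed to run the Newton decomposition and the maps $\bar i_M,\bar r_M$ over a general coefficient ring $R$, but the structural results of \cite{hecke-1,hecke-2} should furnish this; the analogous strategy for affine Hecke algebras in \cite{CH} is a useful guide.
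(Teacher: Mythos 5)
Your overall skeleton is the same as the paper's: decompose $f=\sum_v f_v$ via the Newton decomposition, use the compatibility of $\bar r_M$ with Newton strata (the paper's Proposition \ref{r-N}) plus disjointness of $W_G$-orbits to reduce ellipticity to the vanishing of the relevant block of $\bar r_{M_v}(f_v)$, and then conclude by an injectivity statement for $\bar r_{M_v}$ on $\bar H(v)$ proved through a Mackey-type analysis of $\bar r_{M_v}\circ\bar i$. The gap is in your ingredient (ii), which is exactly the point the paper does not take for granted. First, as stated the identity $\mathrm{pr}\circ\bar r_{M_v}\circ\bar i_{M_v}=\sum_{w\in W_v}w$ is false for your choice of block: every Mackey term $\bar i^{M}_{w(v)}\circ w\circ \bar r^M_{M\cap {}^{w\i}M}$, $w\in{}^MW^M$, lands in a stratum $\bar H(M_v;w(v))$ whose Newton point is still in the $W_G$-orbit of $v$, hence \emph{inside} the part of $\bar H(M_v)$ ``lying over the $G$-class of $v$''; the intermediate-Levi terms (those with $w\notin N_{M_v}$) are therefore not annihilated by your projection. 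Second, even after correcting the target of the projection, the formula you invoke is the full Mackey formula at the level of the cocenter, which is precisely what is not available here: it follows from the representation-theoretic Mackey formula only in the reduced cocenter $\bar H^{\red}_R$, and Theorem \ref{main} concerns $\bar H$ itself (and in small characteristic $\Tr_R$ is not injective, so one cannot transfer the identity); see Remark \ref{remark-mackey}. Relatedly, the induction map you use on the whole block is not constructed integrally in \cite{hecke-2}; only $\bar i_v$ on the single stratum $\bar H(M_v;v)$ is.

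What the paper actually proves, by a direct computation with Newton strata and compact open subgroups (Proposition \ref{mackey}), is the weak statement that for $f'\in\bar H(M_v;v)$ one has $\bar r_{M_v}\bar i_v(f')\in f'+\sum_{w\in{}^{M_v}W^{M_v},\,w\neq 1}\bar H(M_v;w(v))$, i.e.\ the component in the single stratum indexed by $v$ itself is \emph{exactly} $f'$ (here one uses that $W_{M_v}$ is the full stabilizer of $v$, so $w(v)\neq v$ for $w\neq 1$). Combined with the surjectivity of $\bar i_v$ from \cite[Theorem A]{hecke-2} and the directness of the Newton decomposition of $\bar H(M_v)$, this gives the injectivity of Theorem \ref{main'} immediately, with no need for your ingredients (i) and (iii): the identification of $\bar H(v)$ with $W_v$-coinvariants of a block and the freeness of the $W_v$-action are never required (and (i) is itself an unproved strengthening of what \cite{hecke-2} provides). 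So the missing content of your proposal is a proof of a cocenter-level Mackey statement; once you aim it at the single stratum $\bar H(M_v;v)$ rather than the whole block, the coinvariant machinery drops out and you are left with precisely the computation that constitutes the technical heart of the paper's argument.
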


As applications of Theorem \ref{thmA}, we will also establish two important results for mod-$l$ representations: the trace Paley-Wiener theorem and the abstract Selberg principle. 

\begin{theoremA}[Theorem \ref{trace-p-w}]\label{thmB}
Assume furthermore that the order of the relative Weyl group of $G$ is invertible in $R$. Then the image of the trace map $$\Tr_R: \bar H_R \to \fkR_R(G)^*$$ consists of all the good linear forms on $\fkR_R(G)$.
\end{theoremA}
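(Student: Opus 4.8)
The plan is to establish the two inclusions separately. The more routine one is that every linear form of the shape $\Tr_R(\bar f)$ is good: picking a representative $f\in H_R$, it is bi-invariant under some compact open subgroup $K$ and compactly supported, so it acts through finitely many Bernstein components, and along any family of parabolically induced representations $i_M(\s_\ps)$ obtained by unramified twisting the value $\langle\Tr_R(\bar f),i_M(\s_\ps)\rangle$ is a regular function of $\ps$ supported on a fixed finite set of components; these are exactly the conditions defining a good form, so one checks them directly.

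For the converse — that every good form $\ph$ equals $\Tr_R(\bar f)$ for some $\bar f$ — I would induct on the semisimple rank of $\BG$. When $\BG$ is anisotropic modulo its centre, $G$ itself is compact modulo centre, so $\bar H_R=\bar H_R^{\rig}$ and the claim follows from the structure of the rigid cocenter (Theorem \ref{thmC} and \cite{hecke-2}), using that $|W|$ and the orders of the relevant finite groups are invertible in $R$. For the inductive step, fix a good form $\ph$ on $\fkR_R(G)$. For each proper standard Levi $M$ the pullback $\ph\circ i_M$ is a good form on $\fkR_R(M)$, hence by the inductive hypothesis of the shape $\Tr_R(\bar g_M)$ for some $\bar g_M\in\bar H(M)$. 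Pushing the $\bar g_M$ forward by the induction maps $\bar i_M$ and performing a $W$-averaged M\"obius inversion over the poset of standard Levi subgroups up to conjugacy — this is where the hypothesis $|W|\in R^\times$ is used — one produces $\bar f_0\in\bar H_R$ such that $\ph_0:=\ph-\Tr_R(\bar f_0)$ annihilates $i_M(\fkR_R(M))$ for every proper $M$. Thus $\ph_0$ is a good \emph{elliptic} form, and it remains to realise every good elliptic form as $\Tr_R(\bar f)$ with $\bar f\in\bar H_R^{\el}$.

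This is where Theorem \ref{thmA} does the work. The inclusion $\Tr_R(\bar H_R^{\el})\subseteq\{\text{good elliptic forms}\}$ is immediate, since $\bar r_M(\bar f)=0$ forces $\langle\Tr_R(\bar f),i_M\s\rangle=0$ by adjunction; for the reverse inclusion, Theorem \ref{thmA} places $\bar H_R^{\el}$ inside the rigid cocenter $\bar H_R^{\rig}$, where the structure theory is available. Using the description of $\bar H_R^{\rig}$ from \cite{hecke-2} together with Theorem \ref{thmC}, I would decompose it according to the (finitely many, for a fixed level) compact-modulo-centre semisimple classes; on each piece the trace pairing between $\bar H_R^{\rig}$ and the span of the finitely generated projective $R$-representations is perfect, again because $|W|$ and the orders of the pertinent relative Weyl and $R$-groups are invertible in $R$. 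Combined with the mod-$l$ orthogonality relations for elliptic representations — accessible precisely because Theorem \ref{thmA} confines the elliptic cocenter to the rigid part — this identifies $\Tr_R(\bar H_R^{\el})$ with the full space of good elliptic forms and closes the induction.

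The main obstacle is this last step. Over $\BC$ the analogous assertion is part of Bernstein--Deligne--Kazhdan and is proved by harmonic analysis (orbital integrals, Harish-Chandra's Plancherel theory), tools with no mod-$l$ counterpart; the role of Theorem \ref{thmA} is exactly to substitute for them by showing that the otherwise intractable elliptic cocenter lives inside the rigid cocenter, where the finite-group representation theory (hence the hypothesis on $|W|$) and the projectivity results of \cite{hecke-1,hecke-2} give effective control. A subsidiary technical point is to arrange the $W$-averaged inversion over Levis so that it is insensitive to the non-canonical choices of the $\bar g_M$ while still producing a genuinely elliptic, and still good, residual form $\ph_0$.
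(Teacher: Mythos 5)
Your first half is sound and is essentially the paper's route: the forward inclusion (trace forms are good) is routine, and the reduction of a general good form to a good \emph{elliptic} form by combining the pullbacks $\ph\circ i_M$ for proper Levis with a $W$-weighted inversion is exactly what the $A$-operator of Bernstein--Deligne--Kazhdan does; the paper makes this work over $R$ via the Mackey formula, Lemma \ref{con-w} (which it deduces from the cocenter results rather than the Langlands classification), and Proposition \ref{A^2} ($A_R^2=aA_R$ with the primes of $a$ dividing $\sharp W_G$ — this is where $\sharp W_G\in R^\times$ enters).

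The genuine gap is your final, elliptic step. You invoke a perfect trace pairing between $\bar H_R^{\rig}$ and the span of finitely generated projective representations, together with ``mod-$l$ orthogonality relations for elliptic representations.'' Neither is available: mod $l$ the trace map is not injective (this is why the paper must work with the reduced cocenter $\bar H_R^{\red}$), so no such perfectness can hold as stated, and the rigid-cocenter/rigid-quotient duality is formulated in the paper only as an \emph{expectation} modeled on the affine Hecke algebra case \cite{CH}, not as a theorem one may use; orthogonality relations for elliptic representations are precisely the harmonic-analytic input that is missing mod $l$. Theorem \ref{thmC} (the rank map lands in $\bar H_R^c$) also gives no information about surjectivity of the trace map. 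What actually closes the argument in the paper is a \emph{finiteness} statement replacing BDK's finiteness of discrete central characters: fix a level $\CI_n$; by Moy--Prasad depth preservation (Theorem \ref{t:MP-ind} and its corollary) the operator $\bar A_R^{\red}$ preserves $\bar H_R^{\red}(G,\CI_n)$; by Theorem \ref{thmA} its image lies in the rigid part, which by the level-$\CI_n$ Newton decomposition (Theorem \ref{In-dec}) and \cite[Theorem 5.3]{hecke-1} is finite dimensional; a short duality argument using $A_R^2=aA_R$ then shows every good form on $A_R(\fkR(H_R(G,\CI_n)))$ is a trace (Theorem \ref{disc}), and one passes to the limit over $n$ before running the BDK induction over Levis. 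Without this finiteness mechanism (or a proved substitute for your claimed perfect pairing), your elliptic case does not go through.
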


Here ``good forms" is used in the sense of Bernstein-Deligne-Kazhdan \cite{BDK}. Roughly speaking, the set of irreducible representations of $G$ has an algebraic variety structure and the good forms correspond to the algebraic functions with respect to this structure. We refer to \S\ref{good} for the precise definition. 

\begin{theoremA}[Theorem \ref{Sel}]\label{thmC}
The rank map sends finitely generated projective representations of $G$ over $R$ into elements of the cocenter $\bar H_R$, which can be represented by functions with support in the subset of compact elements of $G$.
\end{theoremA}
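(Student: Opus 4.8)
The plan is to use the Hattori--Stallings rank map $\Rk$, which represents a finitely generated projective representation $P$ (a direct summand of a free module over some level Hecke algebra $\CH(G,K)$) by the class of the trace of its defining idempotent, and which satisfies $\Tr_R(\Rk(P))(\pi)=\dim_R\Hom_G(P,\pi)$ by projectivity. Two functorialities will drive the argument. First, parabolic induction $i_M$ is exact and is the right adjoint of the normalized Jacquet functor $r_M$; hence $r_M$ preserves finitely generated projectives, and, $\bar r_M$ being dual to $i_M$, one has $\bar r_M(\Rk_G(P))=\Rk_M(r_M(P))$ --- this holds on traces by Frobenius reciprocity, and I would upgrade it to an identity in $\bar H_R(M)$ by working at the level of Hochschild chains (in the spirit of Blanc--Brylinski). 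Second, $\bar i_M$ sends a compactly supported class to a compactly supported class, since whether $\overline{\langle\g\rangle}$ is compact is an intrinsic property of $\g$. I would then argue by induction on the semisimple $F$-rank of $G$.

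\emph{Step 1: $\Rk_G(P)$ lies in the rigid cocenter $\bar H^{\rig}_R$.} For a proper Levi $M$, the class $\bar r_M(\Rk_G(P))=\Rk_M(r_M(P))$ is, by the inductive hypothesis applied to the finitely generated projective $M$-representation $r_M(P)$, represented by a function supported on the compact elements of $M$. Feeding this into the structural description of the cocenter from \cite{hecke-2} --- which builds $\bar H_R(G)$ from the rigid cocenters of Levi subgroups via the induction maps $\bar i_M$, with the inverse reading off the $M$-components from the $\bar r_M$ --- I expect to write $\Rk_G(P)=f^{\el}+\sum_{M\subsetneq G}\bar i_M(g_M)$ with each $g_M$ compactly supported in $M$ and $f^{\el}\in\bar H^{\el}_R(G)$; this requires checking that the decomposition is compatible with the support filtration, so that the compactness of the classes $\bar r_M(\Rk_G(P))$ really does force the $g_M$ to be compactly supported. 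Each $\bar i_M(g_M)$ is then compactly supported in $G$, and $f^{\el}\in\bar H^{\el}_R(G)\subseteq\bar H^{\rig}_R(G)$ by Theorem~\ref{thmA}; hence $\Rk_G(P)\in\bar H^{\rig}_R(G)$. (When $G$ has semisimple rank $0$ there are no proper Levi subgroups, $\bar H^{\el}_R(G)=\bar H_R(G)=\bar H^{\rig}_R(G)$, and this step is empty.)

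\emph{Step 2: passing from compact-modulo-center support to compact support.} For an unramified character $\chi$ of $G$, pointwise multiplication by $\chi$ is an algebra automorphism $\tau_\chi$ of $H_R$, and $\tau_\chi^*P\cong P\otimes\chi$. Since unramified twisting preserves every Bernstein component together with the rank of a projective along it, $[P\otimes\chi]=[P]$ in $K_0$, and therefore $\chi\cdot\Rk_G(P)\equiv\Rk_G(P)\pmod{[H_R,H_R]}$ for every unramified $\chi$. Grading $\bar H^{\rig}_R$ by the lattice $G/G^1$, where $G^1$ is the subgroup on which all unramified characters vanish, the $\lambda$-component of $\chi\cdot\Rk_G(P)$ is $\chi(\lambda)$ times that of $\Rk_G(P)$; since $R$ is a field and $\Hom(G/G^1,R^\times)$ separates the points of $G/G^1$, only the $\lambda=0$ component can survive. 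Thus $\Rk_G(P)$ is supported on $G^1$, and together with Step 1 (an element of $G^1$ that is compact modulo center is compact) it is supported on the compact elements of $G$, completing the induction.

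The main obstacle is Step 1: marrying Theorem~\ref{thmA} to the compact-support filtration, i.e. verifying that the cocenter decomposition of \cite{hecke-2} restricts to a decomposition of the compactly supported cocenters, so that inductively-known compactness on the proper Levis can be transported back to $G$. Establishing the identity $\bar r_M\circ\Rk_G=\Rk_M\circ r_M$ already in $\bar H_R(M)$ (not merely on traces), and the mod-$l$ form of the $K_0$-invariance under unramified twist used in Step 2, are the remaining points needing care.
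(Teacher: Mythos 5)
Your skeleton (induction on the semisimple rank, plus the compatibility $\bar r_{M,R}\circ\Rk_G=\Rk_M\circ r_{M,R}$ in $\bar H_R(M)$, not just on traces) is the same as the paper's, but the load-bearing step of your Step 1 is a genuine gap: the decomposition $\Rk_G(P)=f^{\el}+\sum_{M\subsetneq G}\bar i_M(g_M)$ with each $g_M$ compactly supported and readable off from $\bar r_M(\Rk_G(P))$ is not available, and you yourself only ``expect'' it. No elliptic-plus-induced splitting of $\bar H_R$ is established here or in \cite{hecke-2}; the natural device for producing one (the $A$-operator) requires $\sharp W_G$ invertible in $R$ and, worse, is only defined on the reduced cocenter $\bar H_R^{\red}=\bar H_R/\ker\Tr_R$, which is too weak for the Selberg principle, since the statement concerns an actual representing function in $\bar H_R$ --- this is exactly the point of Remark \ref{remark-mackey}. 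Recovering the $g_M$ from the classes $\bar r_M(\Rk_G(P))$ would moreover require the full Mackey formula for $\bar r_M\circ\bar i_{M'}$ at the cocenter level, which the paper only possesses in the weak form of Proposition \ref{mackey}. The paper's proof avoids any such splitting: it writes $\Rk_G(\varPi)=\sum_\nu f_\nu$ via the Newton decomposition, uses Proposition \ref{r-N} to identify the $\nu$-graded piece of $\bar r_M(\Rk_G(\varPi))$ with $\bar r_M(f_\nu)$, kills these pieces for $\nu\notin\aleph_c$ by the inductive hypothesis, and then applies the component-wise injectivity of $\bar r_{M_v}$ on $\bar H(G;v)$ (Theorem \ref{main'}, the ``stronger form'' of Theorem \ref{thmA}); Theorem \ref{thmA} alone is not enough, because one must kill individual Newton components rather than elements annihilated by all $\bar r_M$. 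Your assertion that $\bar i_M$ preserves compact support ``since compactness is intrinsic'' is also not an argument as it stands ($\bar i_M$ is defined by adjunction, not by a support-preserving formula); it can be repaired using the compatibility of the induction map with the Newton decomposition from \cite{hecke-2}, but that is the statement you would need to invoke.

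Your Step 2 is a genuinely different, Dat-style route (\cite{Da2}) for upgrading compact-modulo-center support to compact support, and the $G/G^1$-grading and separation-of-points argument is fine over any algebraically closed $R$ of characteristic $\neq p$. However, its key input, $[P\otimes\chi]=[P]$ in $K_0$ for all unramified $\chi$, is justified by appealing to Bernstein components and ranks along them, which is precisely the theory that is not available for mod-$l$ representations (the paper deliberately avoids it); proving this invariance in the present generality is a nontrivial task you have not addressed. The paper instead handles this issue by keeping the Kottwitz factor inside the definition of $\aleph_c$ and running the same Newton-component argument, so no twisting statement is needed. In sum, both your main step and your refinement step currently rest on unproven inputs, and making Step 1 rigorous would in effect reproduce the paper's Newton-theoretic argument rather than bypass it.
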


The trace Paley-Wiener theorem was first established by Bernstein, Deligne and Kazhdan in \cite{BDK} for complex representations. It was later generalized to a twisted version by Flicker \cite{Fl} and more recently, by Henniart and Lemaire \cite{HL} for complex $\omega$-representations arising from the theory of twisted endoscopy. A key ingredient in these proofs is to establish a ``finiteness result'' for the discrete central characters via the study of tempered representations. 

The abstract Selberg principle was first established by Blanc and Brylinski in \cite{BB} for complex representations using low-dimensional cyclic and Hochschild homology and the Fourier transform. A different proof for $\text{char}(F)=0$ was given by Dat in \cite{Da2} using Clozel's integration formula and the  triviality of the action of unramified characters on the Grothendieck ring of projective modules of finite type. 

Both results were not known for mod-$l$ representations before. In the rest of the introduction, we will sketch our strategy to prove these results. 

\subsection{} Firstly, as we would like to develop a general machinery for arbitrary algebraically closed field of characteristic not equal to $p$, we work with the integral form $H$ of $H_R$, the algebra of $\BZ[\frac{1}{p}]$-valued functions, and its cocenter $\bar H=H/[H, H]$. 

The starting point is the Newton decomposition introduced in \cite{hecke-1}. Roughly speaking, we have decompositions $$G=\bigsqcup_{\nu \in \aleph} G(\nu) \quad \text{ and } \quad \bar H=\bigoplus_{\aleph} \bar H(\nu),$$ where $\aleph$ is the product of  $\pi_1(G)$ (the Kottwitz factor) and the set of dominant rational coweights of $G$ (the Newton factor), and $\bar H(\nu)$ is the $\BZ[\frac{1}{p}]$-submodule of $\bar H$ that can be represented by functions supported in $G(\nu)$. The subset of compact-modulo-center elements of $G$ is the union of $G(\nu)$ where the Newton factor of $\nu$ is central. The subset of compact elements of $G$ is the union of $G(\nu)$ where the Newton factor of $\nu$ is central and the Kottwitz factor of $\nu$ is of finite order. 

It is shown in \cite{hecke-2} that the Newton decomposition on $\bar H$ is compatible with the induction map. In section 2, we establish an explicit formula for the restriction map $\bar r_M: \bar H(G) \to \bar H(M)$, adjoint to the parabolic induction functor $i_M: \fkR_R(M) \to \fkR_R(G)$. In section 3, based on the explicit formula, we show that the restriction map $\bar r_M$ is compatible with the Newton decomposition of $\bar H(G)$ and $\bar H(M)$. This allows one to compute the image of $\bar r_M$ component-wise. In section 4, we prove a Mackey-type formula at the level of the cocenter. Note that in small characteristics, the trace map $\Tr_R: \bar H_R \to \fkR_R(G)^*$ is not injective and the desired formula for the cocenter does not follow from the Mackey formula for representations. Our proof of the Mackey-type formula for the cocenter is therefore rather involved. 

After we establish all these ingredients, we are able to compute $\bar r_M(f)$ for any $f \in \bar H(G)$ in a fairly explicit way and to show that if $\bar r_M(f)=0$ for all proper Levi subgroups $M$, then $f \in \oplus \bar H(\nu)$, where $\nu$ runs over elements in $\aleph$ with central Newton factor. Thus Theorem \ref{thmA} is proved. 

It is also worth mentioning that in the proof of Howe's conjecture in \cite{hecke-1}, one mainly uses only the fact that $\bar H$ is spanned by $\bar H(\nu)$; while in the proof of Theorem \ref{thmA} here, we use the full strength of the Newton decomposition $\bar H=\oplus \bar H(\nu)$. The fact that this is a direct sum decomposition allows us to do the component-wise computations and it plays a crucial way in the argument here. 

\subsection{} Now we discuss the strategy to prove the trace Paley-Wiener theorem. 

We first follow the combinatorial argument of \cite{BDK} and use the $A$-operator to reduce to the study of elliptic representations. This is the only part where we use the assumption that the order of the relative Weyl group is invertible in $R$. Then we establish the elliptic trace Paley-Wiener Theorem, based on a ``finiteness result''. 

The finiteness result in \cite{BDK} (as well as in \cite{Fl}, \cite{HL}) is that the set of discrete central characters in each given Bernstein component is a finite union of orbits under the action of the unramified characters. The finiteness result we establish here is different. We do not use the Bernstein components nor the irreducible representations, since for mod-$l$ representations, these objects are not well understood yet. We show instead that the image under the $A$-operators of the Grothendieck group of smooth admissible representations for a given depth is of finite rank.  This result will be deduced from the analogous result for the cocenter: the image in the cocenter $\bar H$ of the map $\bar A$ on the space of functions on $G$ of a given depth is finite dimensional. 

Note that it suffices to consider the depth $n-\e$, where $n$ is a positive integer and $\e>0$ is sufficiently small. In this case, one may just consider the functions in $H(G, \CI_n)$, the compactly supported $\CI_n$-biinvariant functions on $G$. Here $\CI_n$ is the $n$-th congruence subgroup of a fixed Iwahori subgroup $\CI$ of $G$. In this case, we have the Newton decomposition for depth $n-\e$ established in \cite{hecke-1}: $$\bar H(G, \CI_n)=\bigoplus_{\nu \in \aleph} \bar H(G, \CI_n; \nu).$$

The image under $\bar A$ of the cocenter $\bar H$ is exactly the elliptic cocenter that we discussed earlier. Now based on our main result Theorem \ref{thmA} and the Newton decomposition for depth $n-\e$, the image under $\bar A$ of $\bar H(G, \CI_n)$ is contained in the rigid cocenter of $\bar H(G, \CI_n)$, i.e. in $\bigoplus \bar H(G, \CI_n; \nu)$, where $\nu$ runs over elements in $\aleph$ with central Newton factor. Now the desired ``finiteness result'' follows from the fact that each Newton component $\bar H(G, \CI_n; \nu)$ is finite dimensional (\cite[Theorem 4.1]{hecke-1}). 

\subsection{} We now arrive at the abstract Selberg principle. 
For the purpose of the introduction, we assume that $G$ is semisimple. Then every compact-modulo-center element is compact and we need to show that the image of the rank map is contained in the rigid cocenter $\bar H^{\rig}_R$.

Let $f \in \bar H_R$ be the image of the rank map of a finitely generated projective module of $G$ over $R$. By the compatibility of the rank map with the Jacquet functor, and the inductive hypothesis, we have that $\bar r_M(f) \in \bar H(M)^\rig_R$. Then we deduce that $f \in \bar H_R^\rig$ by 

\begin{itemize}
\item the compatibility of the Newton decomposition on $\bar H(G)$ and on $\bar H(M)$;

\item a stronger form of Theorem \ref{thmA}, which says that for $\nu \in \aleph$ whose Newton factor is non-central, if the $\nu$-component of $\bar r_M(f)$ is trivial for all proper Levi $M$, then the $\nu$-component of $f$ is also trivial. 
\end{itemize}

\subsection{} Finally, in Appendix \ref{appendix}, we present a different proof (based on certain classical results of Clozel) of Theorem \ref{thmA} for Hecke algebras over $R$ under the assumptions that $F$ has characteristic $0$.

\section{Preliminaries}

\subsection{} We fix a minimal parabolic subgroup $P_0$ of $G$ and its Levi subgroup $M_0$. A parabolic subgroup is called {\it standard} if it contains $P_0$. A Levi subgroup $M$ is called {\it standard} if $M \supset M_0$ and it is a Levi subgroup of a standard parabolic subgroup. For any standard Levi subgroup $M$, we denote by $W_M$ its relative Weyl group. We choose the Haar measure $\mu_G$ on $G$ in such a way that the volume of the pro-$p$ Iwahori subgroup $\CI'$ is $1$. Then for any open compact subgroup $\CK$ of $G$, $\mu_G(\CK) \in \BZ[\frac{1}{p}]$. The Haar measures on the standard Levi subgroups are chosen in the same way. 

Let $H$ be the Hecke algebra of $G$ over $\BZ[\frac{1}{p}]$, i.e., the space of locally constant, compactly supported $\BZ[\frac{1}{p}]$-valued functions on $G$, endowed with convolution with respect to Haar measure $\mu$. 

Let $\bar H=H/[H, H]$ be the cocenter of $H$. If $f\in H$, denote by $\bar f$ the image of $f$ in $\bar H$.

We fix an algebraically closed field $R$ of characteristic not equal to $p$. Set \[H_R=H \otimes_{\BZ[\frac{1}{p}]} R \text{ and }\bar H_{R}=\bar H \otimes_{\BZ[\frac{1}{p}]} R.\]
Let $\fkR_R(G)$ be the $R$-vector space with basis $\text{Irr}H_R$, the isomorphism classes of irreducible smooth admissible $R$-representations of $G$. For every standard parabolic subgroup $P=M N$, denote by $\pi_P:P\to M$ the natural projection map. We also denote by $i_{M, R}: \fkR_R(M) \to \fkR_R(G)$ the functor of normalized induction and by $r_{M, R}: \fkR_R(G) \to \fkR_R(M)$ the normalized Jacquet functor, the functor of $N$-coinvariants. Whenever we speak of normalized induction/restriction, we assume tacitly that $p$ has a square root in $R$ and we fix such a square root (see also \cite[page 97]{Vig}).

\subsection{} 
Now we recall the categorical description of $\bar H_R$, see \cite[\S F]{Fl} and \cite[\S 5.1]{Da}. The cocenter of $H$ can be viewed as the quotient of the free abelian group on the symbols $(\pi, u)$, where $\pi$ is a finitely generated projective $H$-module over $R$, and $u \in \End_{H_R}(\pi)$ modulo the relations 

\begin{itemize}
\item[(C1)] $(\pi, u)=(\pi_1, u_1)+(\pi_2, u_2)$ for any short exact sequence $0 \to \pi_1 \to \pi \to \pi_2 \to 0$ that commutes with the $u$-action.

\item[(C2)] $(\pi, u)+(\pi, u')=(\pi, u+u')$.

\item[(C3)] $(\pi, f g)=(\pi', g f)$ if $f: \pi' \to \pi$ and $g: \pi \to \pi'$. 
\end{itemize}

As explained in \cite[\S 1.7]{Da}, the functors $i_{M, R}$ and $r_{M, R}$ send finitely generated projective modules to finitely generated projective modules. Therefore they induce maps on the symbols $(\pi, u)$, which preserve the above relations. We denote the induced map on the cocenter $\bar H_R$ by $$\bar i^{\cat}_{M, R}: \bar H(M)_R \to \bar H_R, \qquad \bar r^{\cat}_{M, R}: \bar H_R \to \bar H(M)_R.$$

By Frobenius reciprocity and Bernstein's second adjointness theorem, we have 
\begin{equation}\label{e:dual}
\begin{aligned}
\Tr^G_R(h_G, i_{M, R}(\s))&=\Tr^M_R(\bar r^{\cat}_{M, R}(h_G), \s), \\
\Tr^M_R(h_M, r_{M, R}(\pi))&=\Tr^G_R(\bar i^{\cat}_{M, R}(h_M), \pi),
\end{aligned}
\end{equation}
 for $h_G \in \bar H_R, h_M \in \bar H(M)_R, \pi \in \fkR_R(G), \s \in \fkR_R(M)$. 

\subsection{} 
For every smooth admissible $H_R$-representation $(\pi,V_\pi)$ and $f\in \bar H_R$, the operator $\pi(f)\in \End(V_\pi)$ is of trace class, and we denote by $\Tr_R(f,\pi)$ its trace. We have the induced trace map $$\Tr_R: \bar H_R \to \fkR_R(G)^*.$$

In the case where $R=\BC$, we have the trace Paley-Wiener theorem \cite{BDK} and spectral density theorem \cite{Kaz} which identifies the cocenter $\bar H_\BC$ with the the space of ``good'' linear forms on $\fkR_\BC(G)$ (we refer to \S \ref{good} for the precise definition of good forms). In this case we may regard $\bar i^{\cat}_{M, \BC}$ (resp. $\bar r^{\cat}_{M, \BC}$) as the dual functor of $r_{M, \BC}$ (resp. $i_{M, \BC}$). However, if $\text{char}(R)$ is small, then the trace map is not injective, and thus the relations about the functors on $\fkR_R(*)$ do not automatically imply the similar relations on $\bar H(*)_R$.  However, it is easy to see, using (\ref{e:dual}), that $\ker \Tr_R$ is stable under the induction and restriction maps. We define the {\it reduced cocenter} by $$\bar H_R^{\red}=\bar H_R/\ker \Tr_R.$$ For any open compact subgroup $\CK$, we define $\bar H_R(G, \CK)^{\red}$ to be the image of $\bar H_R(G, \CK)$ in $\bar H_R^{\red}$. 
\subsection{} We recall the Newton decomposition introduced in \cite{hecke-1}. 

Let $A$ be a maximal $F$-split torus in $M_0$ and $Z$ be the centralizer of $A$. We may identify the relative finite Weyl group $W_G$ with $N_G A(F)/Z(F)$. We fix a special vertex in the fundamental alcove of the apartment corresponding to $A$. The Iwahori-Weyl group can be realized as  $$\tW \cong X_*(Z)_{\Gal(\bar F/F)} \rtimes W_G=\{t^\l w; \l \in X_*(Z)_{\Gal(\bar F/F)}, w \in W_G\}.$$ 

We set $V=X_*(Z)_{\text{Gal}(\bar F/F)} \otimes \BR$ and $\Omega=\tW/W_a$, where $W_a$ is the affine Weyl group associated to $\tW$. Set $\aleph=\Omega \times V_+$, where $V_+$ is the set of dominant elements in $V$ with respect to the positive roots given by $P_0$. For any $\nu \in \aleph$, we have the corresponding Newton stratum $G(\nu)$. The precise definition is technical, and we refer to \cite[\S 2.2]{hecke-1} for more details. For any $\nu \in \aleph$, let $H(G; \nu)$ be the subspace of $H$ consisting of functions with support in $G(\nu)$ and $\bar H(G; \nu)$ be the image of $H(\nu)$ in $\bar H$. We may simply write $H(\nu)$ for $H(G; \nu)$ and $\bar H(\nu)$ for $\bar H(G; \nu)$. The following result is proved in \cite[Theorem 2.1  \& Theorem 3.1]{hecke-1}.

\begin{theorem}
We have the Newton decompositions $$G=\sqcup_{\nu \in \aleph} G(\nu), \quad H=\oplus_{\nu \in \aleph} H(\nu), \quad \bar H=\oplus_{\nu \in \aleph} \bar H(\nu).$$
\end{theorem}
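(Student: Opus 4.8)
The plan is to establish the three decompositions in turn: the set-theoretic stratification $G=\bigsqcup_{\nu}G(\nu)$ is the geometric core, the decomposition $H=\bigoplus_\nu H(\nu)$ of the Hecke algebra follows from it formally, and the decomposition of the cocenter follows via the description of $\bar H$ as a space of conjugation coinvariants. For the stratification I would start from the Iwahori--Bruhat decomposition $G=\bigsqcup_{\tw\in\tW}\CI\tw\CI$ into compact open double cosets. To each $\tw\in\tW$ one attaches a Newton point $\nu_{\tw}\in\aleph=\Omega\times V_+$: its $\Omega$-component is the image of $\tw$ under $\tW\to\Omega=\tW/W_a$, and its $V_+$-component is the dominant representative of the average $\lim_n\tfrac1n\lambda_n$, where $\lambda_n$ is the translation part of $\tw^n$ (equivalently, one passes through the associated $\s$-isocrystal and applies Kottwitz's Newton map). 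One then sets $G(\nu)$ to be the union of the $\CI\tw\CI$ with $\nu_{\tw}=\nu$, which is essentially the definition of \cite[\S2.2]{hecke-1}; this yields $G=\bigsqcup_{\nu\in\aleph}G(\nu)$ with each $G(\nu)$ open as a union of open sets, hence also closed since its complement is again a union of open sets, and the family $\{G(\nu)\}$ is locally finite in the strong sense that any compact subset of $G$, being covered by finitely many of the pairwise disjoint compact open sets $\CI\tw\CI$, meets only finitely many of the strata.

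Granting the stratification, the decomposition of $H$ is immediate: if $f\in H$, its support is compact and hence meets only finitely many of the clopen strata $G(\nu_1),\dots,G(\nu_k)$, so each $f\cdot\mathbf 1_{G(\nu_i)}$ is again locally constant, compactly supported, and supported in $G(\nu_i)$; thus $f=\sum_{i=1}^k f\cdot\mathbf 1_{G(\nu_i)}$, and disjointness of the strata makes the sum $\bigoplus_\nu H(\nu)$ direct. For the cocenter one cannot argue directly with $[H,H]$, since a convolution $f_\mu*h_\lambda$ of functions supported in single strata need not be supported in a single stratum; instead I would use the standard identification of $\bar H=H/[H,H]$ with the space of coinvariants $H_{\mathrm{conj}}$ of $H$ for the $G$-action by conjugation $({}^gf)(x)=f(g^{-1}xg)$ (see \cite[\S5.1]{Da}, \cite[\S F]{Fl}). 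Since each $G(\nu)$ is conjugation-stable, this action preserves the decomposition $H=\bigoplus_\nu H(\nu)$; and because forming coinvariants commutes with direct sums, $\bar H=H_{\mathrm{conj}}=\bigoplus_\nu(H(\nu))_{\mathrm{conj}}=\bigoplus_\nu\bar H(\nu)$, the last equality holding because $(H(\nu))_{\mathrm{conj}}$ is by construction the image $\bar H(\nu)$ of $H(\nu)$ in $\bar H$.

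The main obstacle is the single input glossed over above: that each stratum $G(\nu)$ is invariant under $G$-conjugation. This does \emph{not} follow from conjugation-invariance inside $\tW$, because a $G$-conjugate of an Iwahori double coset $\CI\tw\CI$ need not be an Iwahori double coset. Proving it requires the interaction between conjugacy in the Iwahori--Weyl group (via minimal length elements in conjugacy classes) and conjugacy in $G$, together with Kottwitz's classification of $\s$-isocrystals; this is precisely the content of \cite[\S\S2--3]{hecke-1}, and everything else in the argument is formal.
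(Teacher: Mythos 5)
Your proposal is correct and in substance takes the same route as the paper, which offers no argument of its own but cites \cite{hecke-1}, Theorems 2.1 and 3.1: your formal deduction of $H=\oplus_{\nu}H(\nu)$ and $\bar H=\oplus_{\nu}\bar H(\nu)$ from the clopen, conjugation-stable, locally finite stratification of $G$ (via the identification of $\bar H$ with conjugation coinvariants, which is standard and valid over $\BZ[\frac{1}{p}]$ by unimodularity of $G$) is exactly the routine part of that reference, and the one substantive input you defer --- invariance of the Newton point under $G$-conjugation, equivalently disjointness of the conjugation-closed strata as they are actually defined in \cite[\S 2.2]{hecke-1} --- is precisely the content of \cite[Theorem 2.1]{hecke-1}, proved there via minimal length elements in $\tW$ and $\sigma$-isocrystals, so your attribution of the hard step is accurate. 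The only minor inaccuracies are that \cite[\S 5.1]{Da} and \cite[\S F]{Fl} are cited in this paper for the categorical description of the cocenter rather than for the coinvariant description you invoke (the latter needs its own short, standard argument), and that with the definition of $G(\nu)$ in \cite{hecke-1} the nontrivial assertion is disjointness rather than conjugation-invariance, these being equivalent packagings of the same theorem.
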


In this paper, we are mainly interested in the $V$-factor of $\aleph$. For $v \in V_+$, we set $\bar H(v)=\oplus_{\nu=(\t, v) \text{ for some } \t \in \Omega} \bar H(\nu)$. 

Let $\bar H_R^{\red}(v)$ be the image of $\bar H_R(v)$ in $\bar H^{\red}$. The following result follows from \cite[Theorem 6.3]{hecke-2}.

\begin{theorem}\label{t:newton}
We have the Newton decomposition $$\bar H_R^{\red}=\oplus_{v \in V_+} \bar H_R^{\red}(v).$$
\end{theorem}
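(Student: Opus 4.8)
The plan is to bootstrap the statement from the already-available Newton decomposition $\bar H = \oplus_{v \in V_+} \bar H(v)$ (refining the $\aleph$-decomposition of the theorem recalled just above) together with the cited \cite[Theorem 6.3]{hecke-2}. First I would observe that the $R$-module structure is the trivial obstacle: tensoring the integral decomposition $\bar H = \oplus_{v\in V_+}\bar H(v)$ with $R$ over $\BZ[\tfrac1p]$ is exact and preserves direct sums, so $\bar H_R = \oplus_{v\in V_+}\bar H_R(v)$ with $\bar H_R(v) = \bar H(v)\otimes_{\BZ[1/p]} R$. The content is therefore entirely about passing to the quotient $\bar H_R^{\red} = \bar H_R/\ker\Tr_R$: I must show that $\ker\Tr_R$ is itself graded, i.e. $\ker\Tr_R = \oplus_{v\in V_+}\bigl(\ker\Tr_R \cap \bar H_R(v)\bigr)$, for then $\bar H_R^{\red} = \oplus_v \bar H_R(v)/(\ker\Tr_R\cap\bar H_R(v))$ and the right-hand summand is by definition $\bar H_R^{\red}(v)$.

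The key input for the gradedness of the kernel is the compatibility, established in \cite{hecke-2}, between the Newton grading and the trace pairing — concretely, that the various truncations/projections $\bar H_R \to \bar H_R(v)$ are realized spectrally by families of representations, or equivalently that $\ker\Tr_R$ is stable under the projectors $e_v$ onto the $v$-components. I would spell this out as: the decomposition of \cite[Theorem 6.3]{hecke-2} is precisely the assertion that $\bar H_R^{\red}$ splits as $\oplus_v \bar H_R^{\red}(v)$, where $\bar H_R^{\red}(v)$ is the image of $\bar H_R(v)$; so the statement here is essentially a restatement, and the proof amounts to citing that theorem and checking that the $V_+$-indexing used here (grouping the $\aleph$-strata by their Newton factor $v$, discarding the $\Omega$-factor) matches the indexing there. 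If instead one wants a self-contained argument: let $f = \sum_v f_v \in \ker\Tr_R$ with $f_v \in \bar H_R(v)$; one uses that for each representation $\pi$, $\Tr_R(f,\pi) = \sum_v \Tr_R(f_v,\pi)$, and that the "Newton support" of a representation's character is a finite down-set, so by running over representations of growing depth and invoking the finite-dimensionality of each $\bar H_R(G,\CI_n;\nu)$ (\cite[Theorem 4.1]{hecke-1}) one separates the components and concludes $f_v \in \ker\Tr_R$ for each $v$.

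I expect the only real obstacle to be purely expository: correctly matching the bookkeeping between the $\aleph = \Omega\times V_+$ grading used in \cite{hecke-1,hecke-2} and the coarser $V_+$-grading $\bar H(v) = \oplus_{\nu = (\tau,v)}\bar H(\nu)$ introduced in the paragraph above, and checking that the reduction mod $\ker\Tr_R$ in \cite[Theorem 6.3]{hecke-2} is stated for exactly this coarser grading (or passes to it). There is no genuine mathematical difficulty beyond what is already in the cited theorems; the proof should be a short paragraph invoking \cite[Theorem 6.3]{hecke-2} after tensoring the integral $V_+$-graded Newton decomposition with $R$ and passing to the reduced cocenter.
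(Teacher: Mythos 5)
Your proposal is correct and follows essentially the same route as the paper, which gives no argument beyond observing that the statement follows from \cite[Theorem 6.3]{hecke-2} applied to the coarser $V_+$-grading (the image of $\bar H_R(v)$ in $\bar H_R^{\red}$ being by definition $\bar H_R^{\red}(v)$). Your main citation-plus-bookkeeping argument is exactly what is intended; the sketched ``self-contained'' separation argument is unnecessary and not substantiated by anything in the paper (and one should be careful there, since by \cite[Remark 6.4]{hecke-2} the analogous separation fails for the finer $\aleph$-grading), but this does not affect the correctness of your primary route.
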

\begin{remark}
As explained in \cite[Remark 6.4]{hecke-2}, $\sum_{\nu \in \aleph} \bar H_R^{\red}(\nu)$ may not be a direct sum in general. 
\end{remark}

Following \cite[\S 6]{hecke-1}, we define 
\begin{equation}\label{e-rigid}
\bar H^{\rig}=\bigoplus_{v \in V_+; M_v=G} \bar H(v), \quad \bar H^{\rig, \red}_R=\bigoplus_{v \in V_+; M_v=G} \bar H^{\red}_R(v),
\end{equation}
where $M_v$ is the centralizer of $v$ in $G$. We call $\bar H^{\rig}$ the {\it rigid cocenter} and $\bar H^{\rig, \red}_R$ the reduced rigid cocenter. 

\section{The restriction map}

\subsection{} We give the formula for $\bar r_M$ which is the cocenter dual of van Dijk's formula \cite{Di} for parabolic induction of characters.

Let $\CK^{\spe}$ be the maximal special parahoric subgroup of $G$ corresponds to the special vertex of the fundamental alcove we fixed in the beginning. For any $w \in W_G$, we choose a representative ${\dot w}$ in $\CK^{\spe}$. 

Let $P=MN$ be a standard parabolic subgroup. In particular, the Iwasawa decomposition $G=\CK^{\spe}P$ holds. Fix a Haar measure $\mu_N$ on $N$. Let $\mu_P=\mu_M \mu_N$ be the Haar measure on $P=M N$. Let $\delta_P$ be the modulus function of $P$. 

Let $f \in H$ be given. The function $f$ gives rise to two functions: $f^{(P)}$, a function on $M$,  and $\widetilde f$, a function on $G$, defined as follows:
\begin{align*}
f^{(P)}(m)&=\d_P^{\frac{1}{2}}(m) \int_N  f(mn) dn, \text{ for } m\in M, \text{ and }\\ \widetilde f(g)&=\int_{\CK^{\spe}}f(k \i g k) dk, \qquad \text{ for } g \in G.
\end{align*}
It is immediate that 
\begin{equation}\label{P-hom}
(L_{p_0}f)^{(P)}=\delta_P^{\frac 12}(\pi_P(p_0)) L_{\pi_P(p_0)} f^{(P)},
\end{equation}
where $L_x$ denotes the left regular action. In other words, the map $f\mapsto f^{(P)}$ is a homomorphism of $P$-modules $H\to H(M)$, where the action on the $H$ is the left regular action, and the action on $H(M)$ is via $\delta_P^{\frac 12} \pi_P$. 

\begin{lemma}\label{l:well} The assignment $f\mapsto \widetilde f^{(P)}$ defines a linear map $H\to H(M)$.
\end{lemma}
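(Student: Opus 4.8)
The plan is to show that the two operations $f \mapsto \widetilde f$ (conjugation-averaging over $\CK^{\spe}$) and $g \mapsto g^{(P)}$ (Jacquet-type pushforward with the $\delta_P^{1/2}$ twist) each preserve the relevant function spaces, so that their composite is well-defined. First I would observe that $\widetilde f$ lies in $H = H(G)$: since $f$ is locally constant and compactly supported, and $\CK^{\spe}$ is compact, the averaged function $\widetilde f(g) = \int_{\CK^{\spe}} f(k^{-1}gk)\,dk$ is again locally constant (the integrand is locally constant in $g$ uniformly over the compact $\CK^{\spe}$, by smoothness of $f$ under translation) and its support is contained in the union $\bigcup_{k\in\CK^{\spe}} k(\supp f)k^{-1}$, which is a compact set because it is the continuous image of the compact set $\CK^{\spe} \times \supp f$. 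Moreover $\widetilde f$ is $\BZ[\tfrac1p]$-valued because $\mu_G(\CK)\in\BZ[\tfrac1p]$ for every open compact $\CK$, and the integral is a finite $\BZ[\tfrac1p]$-linear combination of values of $f$ over the finitely many cosets of a small enough open subgroup of $\CK^{\spe}$. Hence $f\mapsto\widetilde f$ is a linear map $H\to H$.

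Second I would check that $g\mapsto g^{(P)}$ maps $H=H(G)$ into $H(M)$. For $g\in H$ the integrand $g(mn)$, as a function of $n\in N$, is locally constant and compactly supported (its support is the slice $\{n\in N: mn\in\supp g\}$, which is compact), so the integral $\int_N g(mn)\,dn$ converges and again lands in $\BZ[\tfrac1p]$ by the choice of Haar measures. Local constancy in $m$ follows from smoothness of $g$. For the support: the Iwasawa decomposition $G=\CK^{\spe}P$ forces $\supp g$, intersected with $MN$-cosets, to project to a compact subset of $M$; more precisely $\pi_P(\supp g \cap P)$ together with the fact that $\supp g$ is compact gives that $g^{(P)}$ has compact support in $M$. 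The modulus factor $\delta_P^{1/2}(m)$ is locally constant and $\BZ[\tfrac1p]$-valued on the support since $\delta_P$ takes values in powers of $q$ (a power of $p$). Composing the two maps gives $f\mapsto \widetilde f^{(P)}\in H(M)$, and linearity is immediate from linearity of integration.

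The one genuinely delicate point — the step I expect to be the main obstacle — is the compact support of $g^{(P)}$ in $M$, i.e.\ controlling $\supp\bigl(\widetilde f^{(P)}\bigr)$. One must argue that although $N$ is noncompact, the set of $m\in M$ for which the fiber $\{n: mn\in\supp g\}$ is nonempty is bounded; this is exactly where the Iwasawa decomposition and the structure of the parabolic enter, and it is the reason the $\delta_P^{1/2}$-normalized pushforward behaves well. One clean way to organize this is: $\supp g$ compact $\Rightarrow$ it meets only finitely many cosets $\CK^{\spe}\CK_M\CK_N$ for suitable open compact $\CK_M\subset M$, $\CK_N\subset N$; projecting along $\pi_P$ on the $P$-part and using $G=\CK^{\spe}P$ then exhibits a compact $C\subset M$ with $\supp g^{(P)}\subseteq C$. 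Once this finiteness is in place, local constancy and $\BZ[\tfrac1p]$-integrality are routine, and the lemma follows. (A harmless alternative is to prove the two statements ``$f\mapsto\widetilde f$ preserves $H$'' and ``$g\mapsto g^{(P)}$ sends $H$ to $H(M)$'' as two separate sub-claims and then just compose.)
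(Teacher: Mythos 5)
Your proposal is correct and follows essentially the same route as the paper: the decisive point in both is that $\supp\bigl(g^{(P)}\bigr)\subseteq \pi_P(\supp g\cap P)$, a compact subset of $M$, combined with local constancy and the $\BZ[\frac 1p]$-integrality of the measures involved. The only difference is organizational: the paper reduces to $f=\delta_X$ and records the explicit coset-sum formula (\ref{e:rP-explicit}), which it reuses later, whereas you treat a general $f\in H$ by composing the two maps $f\mapsto\widetilde f$ and $g\mapsto g^{(P)}$.
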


\begin{proof}
It suffices to consider the case where $f=\d_X$, where $X$ is an open compact subset of $G$. Let $\CK' \subset \CK^{\spe}$ be an open compact subgroup such that $X$ is stable under the left and right multiplication of $\CK$. For any $m$, 
\begin{equation}\label{e:rP-explicit}
\widetilde f^{(P)}(m)=\mu(\CK')  \sum_{g \in \CK^{\spe}/\CK'} \sum_{m \in \pi_P({}^{g \i} X \cap P)} \d_P^{\frac{1}{2}}(m) \mu_N (({}^{g \i} X) m \i \cap N);
\end{equation}
recall that $\pi_P: P \to M$ is the natural projection map. Note that $\pi_P({}^{g \i} X \cap P)$ is an open compact subset of $M$, and the functions $\d_P^{\frac{1}{2}}(m)$ and $\mu_N (({}^{g \i} X) m \i \cap N)$ are locally constant $\BZ[\frac{1}{p}]$-valued functions on $M$. Thus $\widetilde f^{(P)}\in H(M)$.  
\end{proof}

\begin{lemma}\label{l:twist}
For every $f\in H$, $\widetilde f\equiv \mu(\CK^{\spe})f \mod [H, H]$. 
\end{lemma}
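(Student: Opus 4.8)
The plan is to deduce the statement from conjugation‑invariance of the cocenter, after rewriting $\widetilde f$ as a \emph{finite} average of conjugates of $f$. Fix $f\in H$ and choose a compact open subgroup $\CK'\subseteq\CK^{\spe}$ such that $f$ is bi‑$\CK'$‑invariant; I would moreover take $\CK'$ to be a congruence subgroup of the pro‑$p$ Iwahori $\CI'\subseteq\CK^{\spe}$. The reason to insist on a congruence subgroup of $\CI'$ is integrality: $\CI'$ is pro‑$p$ and $\mu(\CI')=1$, so $\mu(\CK')=[\CI':\CK']^{-1}$ is a power of $p$, whence $\mu(\CK')^{-1}\in\BZ[\tfrac1p]$; this is needed below so that certain rescaled characteristic functions belong to $H$.

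First I would make the average finite. Write ${}^{k}f(g):=f(k^{-1}gk)$ for $k\in G$. Bi‑$\CK'$‑invariance of $f$ gives ${}^{k\kappa}f={}^{k}f$ for all $\kappa\in\CK'$, because inside $f$ one absorbs $\kappa^{-1}$ on the left and $\kappa$ on the right; thus ${}^{k}f$ depends only on the coset $k\CK'$. Decomposing $\CK^{\spe}=\bigsqcup_{i=1}^{r}k_i\CK'$ into its finitely many $\CK'$‑cosets and using $\mu(k_i\CK')=\mu(\CK')$ together with $r\,\mu(\CK')=\mu(\CK^{\spe})$, one obtains
\[
\widetilde f \;=\; \int_{\CK^{\spe}}{}^{k}f\;dk \;=\; \mu(\CK')\sum_{i=1}^{r}{}^{k_i}f .
\]
Hence the lemma is reduced to the claim that ${}^{g}f\equiv f\pmod{[H,H]}$ for every $g\in G$ and every $f\in H$ that is bi‑$\CK'$‑invariant.

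For this claim, set $\phi:=\mu(\CK')^{-1}\mathbf 1_{g\CK'}$ and $\phi':=\mu(\CK')^{-1}\mathbf 1_{\CK'g^{-1}}$; by the choice of $\CK'$ these lie in $H$. Unwinding the convolutions, using unimodularity of $G$ and bi‑$\CK'$‑invariance of $f$, one checks that $\phi*f*\phi'={}^{g}f$ and $\phi'*\phi=e_{\CK'}$, where $e_{\CK'}:=\mu(\CK')^{-1}\mathbf 1_{\CK'}$ is the idempotent satisfying $e_{\CK'}*f=f$. Since $ab\equiv ba\pmod{[H,H]}$,
\[
{}^{g}f \;=\; \phi*f*\phi' \;\equiv\; \phi'*\phi*f \;=\; e_{\CK'}*f \;=\; f \pmod{[H,H]}.
\]
Combining this with the displayed formula for $\widetilde f$ gives $\widetilde f\equiv\mu(\CK')\,r\,f=\mu(\CK^{\spe})\,f$ modulo $[H,H]$, as desired.

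There is no serious obstacle here; the step demanding the most care is the explicit verification of the two convolution identities $\phi*f*\phi'={}^{g}f$ and $\phi'*\phi=e_{\CK'}$ with the correct Haar‑measure normalizations, together with keeping track of the $\BZ[\tfrac1p]$‑integrality that dictates the use of pro‑$p$ congruence subgroups in place of arbitrary compact open subgroups.
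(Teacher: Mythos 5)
Your proof is correct and follows essentially the same route as the paper: both rewrite $\widetilde f$ as $\mu(\CK')\sum_{\bar k\in \CK^{\spe}/\CK'}{}^{\bar k}f$ using bi-$\CK'$-invariance and then invoke conjugation-invariance of the cocenter. The only difference is that you verify the fact that $\overline{{}^{g}f}=\bar f$ explicitly (via $\phi * f*\phi'={}^{g}f$, $\phi'*\phi=e_{\CK'}$ and $ab\equiv ba$), whereas the paper cites it as well known; your integrality bookkeeping with a pro-$p$ congruence subgroup is also consistent with the paper's normalization $\mu(\CI')=1$.
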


\begin{proof}
Let $\CK\subset \CK^{\spe}$ be a compact open subgroup such that $f$ is $\CK$-biinvariant. Then $\widetilde f(g)=\sum_{\bar k\in \CK^{\spe}/\CK} f(\bar k \i g \bar k) \mu(K)$. For $x\in G$, denote by ${}^xf$ the function ${}^xf(g)=f(x  \i gx)$. Then $$\widetilde f=\mu(\CK)\sum_{\bar k\in \CK^{\spe}/\CK} {}^{\bar k}f\text{ in } H.$$
It is well known that in the cocenter $\bar H$, $\bar f=\overline {{}^{x}f}$. The claim follows.
\end{proof}

\subsection{} Now we define $r_P(f) \in \bar H(M)$. We set 
$r_P(f)$ to be the unique element in $\bar H(M) \otimes \BQ$ such that \begin{equation}\label{C-r} \mu_P(\CK^{\spe}\cap P) \, r_P(f) \text{ equals  the image of } {\widetilde f}\,^{(P)} \text{ in } \bar H(M).\end{equation} 

Note that $\BZ[\frac{1}{p}]$ is an integral domain. Thus the element $r_P(f)$, as an element in $\bar H(M) \otimes \BQ$, is uniquely determined by (\ref{C-r}). In the case where $\mu_P(\CK^\spe\cap P)$ is invertible in $R$, one may also regard $r_P(f)$ as an function in $H(M)_R$. However, in general, $r_P(f)$ is not represented by a function in $H(M)$ in a natural way. But as we will see now, $r_P$ is well defined at the level of the cocenter, i.e., $r_P(\bar H)\subseteq \bar H(M)$.

\begin{lemma}
The restriction map $r_P$ preserves the $\mathbb Z[\frac 1p]$-structure, i.e., $r_P(\bar H)\subseteq \bar H(M).$ \end{lemma}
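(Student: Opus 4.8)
The statement to prove is that $r_P(\bar H) \subseteq \bar H(M)$, i.e., although $r_P(f)$ was defined as an element of $\bar H(M) \otimes \BQ$ via the relation $\mu_P(\CK^{\spe}\cap P)\, r_P(f) = $ image of $\widetilde f^{(P)}$ in $\bar H(M)$, in fact no denominators are needed. The plan is to exhibit, for each $f \in H$, a single function $g \in H(M)$ whose class in $\bar H(M)$ equals $r_P(f)$. The natural candidate comes from the Iwasawa decomposition $G = \CK^{\spe} P$: decompose $\CK^{\spe}$ into cosets of $\CK^{\spe} \cap P$ and express the ``averaging over $\CK^{\spe}$'' operation in $\widetilde f$ as an average over $\CK^{\spe}/(\CK^{\spe}\cap P)$ composed with an average over $\CK^{\spe}\cap P$; the latter averaging, pushed through the $P$-equivariant homomorphism $f \mapsto f^{(P)}$ of \eqref{P-hom}, becomes an averaging over the image of $\CK^{\spe}\cap P$ in $M$, which is a \emph{parahoric} subgroup $\CK^{\spe}_M$ of $M$ and in particular \emph{open compact}. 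Conjugation-averaging by an open compact subgroup is, up to a scalar that is exactly the missing volume factor, harmless in the cocenter by the standard fact $\bar f = \overline{{}^x f}$ (used already in Lemma \ref{l:twist}).

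\textbf{Key steps in order.}
\emph{Step 1.} Fix $f\in H$, and choose $\CK \subseteq \CK^{\spe}$ open compact with $f$ being $\CK$-biinvariant and with $\CK$ small enough that the formula \eqref{e:rP-explicit} applies. Using the Iwasawa decomposition, write $\CK^{\spe} = \bigsqcup_j k_j (\CK^{\spe}\cap P)$ for coset representatives $k_j$, so that averaging $\int_{\CK^{\spe}} f(k^{-1} g k)\,dk$ breaks as $\sum_j \int_{\CK^{\spe}\cap P} f((k_j p)^{-1} g\, k_j p)\, dp$ up to the measure normalization.
\emph{Step 2.} Apply the operation $(-)^{(P)}$ and use the $P$-module homomorphism property \eqref{P-hom}: for $p_0 \in P$, $(L_{p_0} h)^{(P)} = \delta_P^{1/2}(\pi_P(p_0))\, L_{\pi_P(p_0)} h^{(P)}$. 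The upshot is that $\widetilde f^{(P)}$ is, up to the scalar $\mu(\CK^{\spe}\cap P)^{-1}\,\mu_M(\CK^{\spe}_M)$ coming from comparing Haar measures on $P$ and $M$ via $\mu_P = \mu_M \mu_N$, equal to $\sum_j \int_{\CK^{\spe}_M}\, {}^{(\bar k_j \bar m)}\big((\,{}^{k_j}f\,)^{(P)}\big)\, d\bar m$ where $\bar k_j \in \CK^{\spe}$-conjugation descends appropriately; the essential point is that the inner integral is conjugation by the \emph{open compact} parahoric $\CK^{\spe}_M = \pi_P(\CK^{\spe}\cap P) \subseteq M$ (its compactness is exactly \cite[\S ...]{hecke-1}, or simply: $\CK^{\spe}\cap P \to M$ has image the parahoric attached to the image facet, an open compact group).
\emph{Step 3.} In the cocenter $\bar H(M)$, conjugation by any element of $M$ acts trivially on classes, and averaging over the open compact $\CK^{\spe}_M$ with respect to $\mu_M$ multiplies the class by the \emph{scalar} $\mu_M(\CK^{\spe}_M) \in \BZ[\tfrac1p]$. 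Dividing out, the class of $\widetilde f^{(P)}$ in $\bar H(M)$ equals $\mu_P(\CK^{\spe}\cap P)$ times the class of the honest function $g := \sum_j (\,{}^{k_j}f\,)^{(P)} \in H(M)$ (finite sum of elements of $H(M)$, hence in $H(M)$ — well-defined by Lemma \ref{l:well}'s argument). Comparing with the defining relation \eqref{C-r}, we conclude $r_P(f) = \bar g \in \bar H(M)$, so $r_P(\bar H) \subseteq \bar H(M)$.

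\textbf{Main obstacle.} The bookkeeping of Haar-measure normalizations is the delicate part: one must verify that the scalar obtained by comparing the integration over $\CK^{\spe}\cap P$ (with measure $\mu_P = \mu_M\mu_N$) against the integration over its image $\CK^{\spe}_M$ in $M$ (with measure $\mu_M$) is precisely the factor $\mu_P(\CK^{\spe}\cap P)/\mu_M(\CK^{\spe}_M)$ predicted by the fibers being copies of $\CK^{\spe}\cap N$ with its $\mu_N$-measure, and that everything stays in $\BZ[\tfrac1p]$ — this rests on the measure normalizations fixed in Section 1 (volume of the pro-$p$ Iwahori equal to $1$) together with the fact that $\delta_P$ is trivial on $\CK^{\spe}\cap P$, so no stray square-roots appear. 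Once the scalar is pinned down to be exactly $\mu_P(\CK^{\spe}\cap P)$, matching \eqref{C-r}, the integrality is immediate. I would also double-check that the finite set of Iwasawa representatives $k_j$ can be taken inside $\CK^{\spe}$ and that the descent of conjugation by $k_j p$ to the pair (conjugation by a $\CK^{\spe}$-element on $G$-side) $\cdot$ (conjugation by $\pi_P$-image on $M$-side, which dies in $\bar H(M)$) is legitimate — this is routine but is where a sign/normalization slip would hide.
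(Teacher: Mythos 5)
Your reduction has the right shape: decomposing the $\CK^{\spe}$-average along $\CK^{\spe}=\bigsqcup_j k_j(\CK^{\spe}\cap P)$, pushing the inner average through $({}^{p}f)^{(P)}={}^{\pi_P(p)}\bigl(f^{(P)}\bigr)$, and killing the resulting $M$-conjugations in $\bar H(M)$ does show that the class of $\widetilde f^{(P)}$ is a \emph{scalar} multiple of the class of the honest function $g=\sum_j({}^{k_j}f)^{(P)}\in H(M)$ (this is, in effect, what Lemma \ref{l:twist} together with \eqref{e:rP-explicit} accomplish in the paper). But the scalar is not $\mu_P(\CK^{\spe}\cap P)$ on the nose: the average defining $\widetilde f$ is taken with respect to $\mu_G$, so the inner integral over $\CK^{\spe}\cap P$ contributes the $\mu_G$-volume $\mu_G(\CK^{\spe}\cap P)$, and comparison with \eqref{C-r} gives
\[
r_P(f)=\frac{\mu_G(\CK^{\spe}\cap P)}{\mu_M(\CK^{\spe}\cap M)\,\mu_N(\CK^{\spe}\cap N)}\;\bar g .
\]
The entire content of the lemma is that this ratio lies in $\BZ[\tfrac 1p]$, and this is exactly the step you defer (``once the scalar is pinned down\dots the integrality is immediate'', ``I would also double-check\dots''). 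It is not automatic: $\mu_G$ restricted to $\CK^{\spe}\cap P$ and $\mu_M\otimes\mu_N$ are a priori unrelated Haar measures on that compact group — the fibers of $\CK^{\spe}\cap P\to\CK^{\spe}\cap M$ carry $\mu_N$ only up to precisely this scalar — and $\mu_N$ was merely ``fixed'', not normalized. Your claim that the scalar equals $\mu_P(\CK^{\spe}\cap P)$ exactly amounts to the identity $\mu_G(\CK^{\spe}\cap P)=\mu_M(\CK^{\spe}\cap M)\mu_N(\CK^{\spe}\cap N)$, which is neither obvious nor proved in the proposal (and is stronger than what is needed). A smaller point: the intermediate expression with conjugation by ``$\bar k_j\bar m$'' is not meaningful, since $k_j\notin P$ has no image in $M$; only the $\pi_P(p)$-conjugation can be transported to $M$.

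What closes the gap is the volume computation that constitutes the paper's proof: with the normalization $\mu_G(\CI')=1$ and its analogue for $M$, one identifies $\mu_G(\CK^{\spe})/\mu_M(\CK^{\spe}\cap M)$ with the ratio $\#\bar\CK^{\spe}/\#\bar M$ of orders of the reductive quotients, which is an integer because $\bar M$ is a Levi subgroup of the finite reductive group $\bar\CK^{\spe}$; combined with $\mu_N(\CK^{\spe}\cap N)^{-1}\in\BZ[\tfrac 1p]$ (and the integrality of $[\CK^{\spe}:\CK^{\spe}\cap P]$, if you want to pass from $\mu_G(\CK^{\spe})$ to $\mu_G(\CK^{\spe}\cap P)$), this yields the required $p$-integrality of the scalar. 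So your conjugation-unwinding is fine and parallel to the paper, but the finite-reductive-group (or equivalent) argument pinning down the measure ratio is the actual substance of the lemma and is missing from the proposal.
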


\begin{proof}
By Lemma \ref{l:twist}, $\widetilde f=\mu(\CK^\spe) f$ in $\bar H$, hence given (\ref{e:rP-explicit}), it is sufficient to show that $\frac{\mu(\CK^\spe)}{\mu_M(\CK^{\spe}\cap M)\mu_N({\CK^{\spe}\cap N})}\in \BZ[\frac 1p]$. Clearly, $\mu_N({\CK^{\spe}\cap N})^{-1}\in \BZ[\frac 1p]$. Moreover, with our normalization of measures $\frac{\mu(\CK^\spe)}{\mu_M(\CK^{\spe}\cap M)}=\frac{\#\bar {\CK}^\spe}{\#\bar M}$, where $\bar {\CK}^\spe$ and $\bar M$ are the reductive quotients of $\CK^\spe$ and $\CK^{\spe}\cap M$, respectively. These are finite reductive groups and $\bar M$ is a Levi subgroup of $\bar {\CK}^\spe$, hence the ratio is an integer. 
\end{proof}


The normalization scalar $\mu_P(\CK^{\spe}\cap P)$ in the definition is needed in order for $r_P(f)$ to be in duality with parabolic induction. We explain this in an example. Notice first that if $\CK$ is any compact open subgroup of $G$ and $\pi'$ is a representation of $G$, then $\Tr(\delta_\CK,\pi')=\mu(\CK) (\pi')^{\CK}.$ Then for any $\pi \in \fkR_R(M)$, we have $$\Tr(\delta_{\CK^\spe}, i_M^G(\pi))=\mu(\CK^\spe)\dim(i_M^G(\pi))^{\CK^\spe}=\mu(\CK^\spe)\dim\pi^{\CK^\spe\cap M}.$$
We also have that $\widetilde \delta_{\CK^\spe}=\mu(\CK^\spe) \delta_{\CK^\spe}$ and then $\widetilde\delta_{\CK^\spe}^{(P)}=\mu(\CK^\spe) \mu_N(\CK^\spe\cap N) \delta_{\CK^\spe\cap M}$, where we used that $mn\in \CK^\spe$ if and only if $m\in \CK^\spe\cap M$ and $n\in \CK^\spe\cap N$ and that $\delta_P=1$ on $\CK^\spe\cap M$. Therefore $$\Tr({\widetilde \d_{\CK^\spe}}^{(P)}, \pi)=\mu(\CK^\spe) \mu_N(\CK^\spe\cap N)\mu_M(\CK^\spe\cap M) \dim\pi^{\CK^\spe\cap M}$$ and so \[\Tr(\delta_{\CK^\spe}, i_M^G(\pi))=\Tr(r_P(\delta_{\CK^\spe}),\pi).\]

\begin{theorem}\label{vd}
The map $H\to \bar H(M)$, $f \mapsto r_P(f)$ induces a map $\bar r_M: \bar H \to \bar H(M)$. 
\end{theorem}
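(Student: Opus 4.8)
The plan is to show that the map $f \mapsto r_P(f)$ kills the commutator subspace $[H,H]$, so that it descends to the cocenter $\bar H$, and then to check that the resulting map is independent of the choice of parabolic $P$ with Levi $M$ (so that we may legitimately write $\bar r_M$). By construction $r_P(f)$ lives in $\bar H(M)\otimes\BQ$ and is $\mu_P(\CK^\spe\cap P)\i$ times the image of $\widetilde f\,^{(P)}$ in $\bar H(M)$; since $\mu_P(\CK^\spe\cap P)$ is a fixed scalar, it suffices to show that $f\mapsto \widetilde f\,^{(P)} \bmod [H(M),H(M)]$ vanishes on $[H,H]$.

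The key computational input is the $P$-equivariance property \eqref{P-hom}: the assignment $f\mapsto f^{(P)}$ is a homomorphism of $P$-modules $H\to H(M)$, where $P$ acts on $H$ by left translation and on $H(M)$ via $\delta_P^{1/2}\pi_P$. First I would record the elementary fact, already used in the proof of Lemma \ref{l:twist}, that in a cocenter the class of a function is invariant under the ($H$-module) conjugation action of $G$: $\bar f=\overline{{}^x f}$ for all $x\in G$. The composite $f\mapsto \widetilde f$ already symmetrizes $f$ over conjugation by $\CK^\spe$, so that $\widetilde f$ is $\CK^\spe$-conjugation-invariant as a function; combined with \eqref{P-hom} one checks that for $m_0\in M$ one has $({}^{m_0}\widetilde f)^{(P)} = \delta_P^{1/2}(m_0)\,{}^{m_0}(\widetilde f\,^{(P)})$ inside $H(M)$, up to terms in $N$ that integrate away. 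This is the mechanism by which conjugation-invariance in $G$ is transported to conjugation-invariance in $M$.

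Concretely, to see that $[H,H]$ dies, take $f=g*h-h*g$ for $g,h\in H$; equivalently, by density it suffices to treat $f={}^x g - g$ for $x\in G$ and $g\in H$, since $[H,H]$ is spanned by such elements (indeed $\bar H$ is exactly the quotient of $H$ by the span of $\{{}^x g - g\}$). Write $x=km_0n_0$ using the Iwasawa decomposition $G=\CK^\spe P=\CK^\spe MN$. The $\CK^\spe$-part is absorbed by the integral defining $\widetilde{(\cdot)}$, so one reduces to $x=m_0n_0\in P$. Then using \eqref{P-hom} together with the conjugation-invariance of $\widetilde{(\cdot)}^{(P)}$ under $M$ established above, one gets $\widetilde{({}^{x}g)}\,^{(P)} \equiv \widetilde{g}\,^{(P)}\bmod [H(M),H(M)]$, which is what we want. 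The $N$-part $n_0$ contributes nothing because the definition of $f^{(P)}$ already integrates over all of $N$, making the answer insensitive to left translation by $N$.

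The main obstacle is the last point — verifying well-definedness, i.e. independence of the choice of $P$ (only $M$ should matter). Two standard parabolics $P=MN$ and $P'=MN'$ with the same Levi differ by replacing $N$ by $N'=w N w\i$ for a suitable Weyl element $w\in W_M\backslash W_G$ normalizing $M$; one must show $r_P(f)$ and $r_{P'}(f)$ have the same image in $\bar H(M)$. This is handled by conjugating by a representative $\dot w\in\CK^\spe$: such a conjugation is invisible in the cocenter $\bar H(M)$ by the conjugation-invariance principle, and it intertwines the two constructions because $\delta_P$ and $\delta_{P'}$ agree on $M$ and $\CK^\spe$ is $\dot w$-stable. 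I would also remark that the normalization constant $\mu_P(\CK^\spe\cap P)=\mu_M(\CK^\spe\cap M)\,\mu_N(\CK^\spe\cap N)$ does depend on $P$ through the $\mu_N(\CK^\spe\cap N)$ factor, but this is exactly compensated by the corresponding factor appearing in $\widetilde f\,^{(P)}$ (as seen in the displayed computation with $\delta_{\CK^\spe}$ just before the statement), so $r_P(f)$ itself is $P$-independent. The duality identity with parabolic induction verified in that example then pins down $\bar r_M$ uniquely as the adjoint of $i_M$, confirming the notation is consistent.
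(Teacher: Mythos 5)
Your reduction steps coincide with the paper's: $[H,H]$ is spanned by the differences ${}^{x}g-g$, and the Iwasawa decomposition $G=\CK^{\spe}P$ together with $\widetilde{{}^{k}g}=\widetilde{g}$ for $k\in\CK^{\spe}$ reduces everything to $x=p_0\in P$. But at exactly the crucial point there is a genuine gap. What must be compared is $\widetilde{{}^{p_0}g}\,^{(P)}$ with $\widetilde{g}\,^{(P)}$, whereas the ``conjugation-invariance of $\widetilde{(\cdot)}^{(P)}$ under $M$'' you establish concerns $({}^{m_0}\widetilde{g})^{(P)}$ versus ${}^{m_0}(\widetilde{g}\,^{(P)})$. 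Since $p_0$ does not normalize $\CK^{\spe}$, conjugation by $p_0$ does not commute with the average defining $\widetilde{(\cdot)}$: one has $\widetilde{{}^{p_0}g}(y)=\int_{\CK^{\spe}}g\bigl((kp_0)\i\,y\,(kp_0)\bigr)\,dk$, i.e.\ conjugation by the elements $kp_0$ rather than $p_0k$, so $\widetilde{{}^{p_0}g}\neq{}^{p_0}\widetilde{g}$, and the $\CK^{\spe}$-conjugation invariance of $\widetilde{g}$ does not bridge the two. Bridging them is the entire content of the paper's proof: inside the $\CK^{\spe}$-integral one rewrites $kp_0=p_{0,k}k_1$ by the Iwasawa decomposition, uses $({}^{p}h)^{(P)}\equiv h^{(P)}\bmod[H(M),H(M)]$ for each $k$ separately, and then compares the integral over $k$ with the integral over $k_1$ via the map $\CK^{\spe}p_0\to\CK^{\spe}$, $pk'\mapsto k'$; this is also where the normalization $\mu_P(\CK^{\spe}\cap P)$ does its work. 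Your proposal offers no substitute for this step, so the key congruence $\widetilde{({}^{x}g)}\,^{(P)}\equiv\widetilde{g}\,^{(P)}$ is asserted rather than proved. Relatedly, the auxiliary identity you invoke is exact, not ``up to terms in $N$ that integrate away'', and conjugating the $N$-integral by $m_0$ rescales it by a full power of $\delta_P(m_0)$ rather than $\delta_P^{\frac 12}(m_0)$; a nontrivial factor $\delta_P(m_0)$ is not congruent to $1$ modulo $[H(M),H(M)]$, and in the complete argument it is only the interplay with the $k\mapsto k_1$ change of variables that makes the modulus bookkeeping consistent, so it cannot simply be discarded.

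The closing discussion of independence of $P$ is both unnecessary and incorrectly argued. A standard Levi $M$ is the Levi of a unique standard parabolic $P\supset P_0$, so the theorem only requires descent of $r_P$ to $\bar H$ for that $P$. Moreover, conjugation by a representative $\dot w\in\CK^{\spe}$ normalizing $M$ is not ``invisible'' in $\bar H(M)$: classes in $\bar H(M)$ are invariant under conjugation by elements of $M$, not in general under $N_G(M)$, and the identity $\bar r_{M'}=\dot w\circ\bar r_M$ is precisely Proposition \ref{r-w}, which the paper proves via the spectral density theorem over $\BC$, not by a formal cocenter manipulation. Likewise, adjointness with $i_M$ does not pin down $\bar r_M$ uniquely when the trace map is not injective (small characteristic), which is why the paper works with the explicit $r_P$ over $\BZ[\frac 1p]$ in the first place.
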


\begin{proof}
It is sufficient to show that $f\in H$ and ${}^xf$, $x\in G$, have the same image in $\bar H(M)$. It is clear that $\widetilde{f}=\widetilde{{}^k f}$, for all $k\in \CK^{\spe}$, and therefore, by the Iwasawa decomposition, it is sufficient to consider the case $x=p_0\in P$.

Note that for any $p \in P$ and $f \in H$, we have that $({}^p f)^{(P)}={}^{\pi_P(p)} (f^{(P)})$ and thus $$({}^p f)^{(P)} \equiv f^{(P)} \mod [H(M), H(M)].$$ 

Now we have that $$\mu_P(\CK^\spe\cap P)~\widetilde{{}^{p_0} f}^{(P)}= \int_{\CK^{\spe}} ({}^{k p_0}f)^{(P)} dk= \int_{\CK^{\spe}} ({}^{p_{0, k} k_1}f)^{(P)} dk \equiv  \int_{\CK^{\spe}} ({}^{k_1}f)^{(P)} dk.$$ The claim then follows since $\mu$ is an invariant measure and therefore the map $$\pi_2: G \to \CK^{\spe},\quad  p k' \mapsto k'$$ gives a homeomorphism from $\CK^{\spe} p_0$ onto $\CK^{\spe}$ which preserves the Haar measure. 
\end{proof}

\subsection{} We would like to compare the extension of $\bar r_M$ over $R$ with the categorical restriction functor $\bar r^{\cat}_{M,  R}$. To this end, we recall certain basic facts about the Jacquet functor. 

For every open compact subgroup $\mathcal K$ in a group $L$ (which could be $G$, $P$, or $M$), such that $\mu_L(\CK)$ is invertible in $R$, denote by $e_{\mathcal K x\mathcal K}=\mu_L(\mathcal K)^{-1}\delta_{\CK x \CK}$ and $e_{x\mathcal K}=\mu(\mathcal K)^{-1}\delta_{x \CK}$. These are elements of $H(L)_R$. When $\CK$ is a subgroup of $G$ (and similarly for $M$) we will implicitly choose $\CK$ sufficiently small (e.g., a subgroup of the pro-p Iwahori subgroup) so that $\mu_G(\CK)$ is invertible in $R$.

Now suppose $\CK$ is a compact open subgroup of $P$ with a decomposition $\CK=(\CK\cap M)(\CK\cap N)$. Then it is easy to check that $\delta_{p\CK}^{(P)}=\mu_N(\CK\cap N) \delta_P^{\frac 12}\delta_{m(\CK\cap M)}$ and therefore 
\begin{equation}\label{P-proj}
e_{p\CK}^{(P)}=\delta_P^{\frac 12} e_{\pi_P(p\CK)}.
\end{equation}

The map $\pi_P: P\to M$ makes $H(M)_R$ into an $H(P)_R$-bimodule. More precisely, for every $h\in H(P)$, define $\pi_P(h)\in H(M)$ by extending linearly the definition $\pi_P(e_{p\CK})=e_{\pi_P(\CK)}$, see (\ref{P-proj}). Then the right action of $H(P)$ on $H(M)$ is given by
\begin{equation}\label{PM-action}
f_M\cdot h=f_M\star \pi_P(h),\quad \text{for all } h\in H(P)_R, \ f_M\in H(M)_R,
\end{equation}
in the right hand side, the convolution being in $H(M)_R.$ 

Every smooth $G$-representation $V$ gives rise to a smooth $P$-representation by restriction. Using the equivalence of categories, this implies that every $H_R$-module can be viewed as an $H(P)_R$-module. Precisely, if $v\in V$ is fixed by an open compact subgroup $\CK$ of $G$, and $\CK'$ is a compact open subgroup of $P$ such that $\CK'\subset \CK$, then $$\pi(e_{p\CK'})v=\pi(e_{p\CK})v,\  p\in P.$$ 
We will need the following result.
\begin{proposition}[{\cite[III.2.10]{Re}}]\label{p:renard} For every $V\in \mathfrak R_R(G)$, there is a natural isomorphism
\[r_{M,R}(V)\cong \delta_P^{\frac 12} H(M)_R\otimes_{H(P)_R}V.
\]
The isomorphism is induced by the map $v\mapsto e_{\CK}^{(P)}\otimes v,$ $v\in V$, where $\CK$ is a compact open subgroup of $P$ such that $\CK\cdot v=v.$
\end{proposition}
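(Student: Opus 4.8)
The plan is to produce mutually inverse $H(M)_R$-module maps between $H(M)_R\otimes_{H(P)_R}V$ and the (unnormalized) Jacquet module $V_N=V/V(N)$ (with $V(N)$ spanned by the vectors $\pi(n)v-v$, $n\in N$, $v\in V$), after which the statement follows by applying $\delta_P^{\frac12}\otimes-$ to both sides, since $r_{M,R}(V)=\delta_P^{\frac12}\otimes V_N$ by definition of the normalized Jacquet functor. Two preliminary facts carry the argument. (a) For \emph{every} compact open subgroup $\CK\subseteq P$ one has $e_\CK^{(P)}=e_{\pi_P(\CK)}$; this is the computation behind $(\ref{P-proj})$, using that $\delta_P$ is trivial on the compact group $\pi_P(\CK)$ and that the $\pi_P$-pushforward of $\mu_P|_\CK$ equals $\mu_N(\CK\cap N)\,\mu_M|_{\pi_P(\CK)}$. (b) The map $\pi_P: H(P)_R\to H(M)_R$ of the previous section is pushforward of measures along $\pi_P: P\to M$ (because $\mu_P=\mu_M\mu_N$), hence a \emph{surjective algebra homomorphism} with $\pi_P(e_\CK)=e_{\pi_P(\CK)}$; consequently the canonical projection $\mathrm{pr}: V\to V_N$ satisfies $\mathrm{pr}(\pi(h)v)=\bar\pi(\pi_P(h))\,\mathrm{pr}(v)$ for every $h\in H(P)_R$, where $\bar\pi$ denotes the (unnormalized) $M$-action on $V_N$.

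First I would check that $\Phi: V\to H(M)_R\otimes_{H(P)_R}V$, $v\mapsto e_\CK^{(P)}\otimes v$, is well defined, i.e.\ independent of the compact open $\CK\subseteq P$ fixing $v$. For $\CK'\subseteq\CK$ this is the chain $e_{\pi_P(\CK)}\otimes v=(e_{\pi_P(\CK')}\star\pi_P(e_\CK))\otimes v=e_{\pi_P(\CK')}\otimes\pi(e_\CK)v=e_{\pi_P(\CK')}\otimes v$, which uses only $e_{\pi_P(\CK')}\star e_{\pi_P(\CK)}=e_{\pi_P(\CK)}$ (valid as $\pi_P(\CK')\subseteq\pi_P(\CK)$) and $\pi(e_\CK)v=v$; linearity then follows by passing to a common $\CK$. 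Since $\pi_P(n)=e$ for $n\in N$, an analogous manipulation (after shrinking $\CK$ so that it also fixes $\pi(n)v$) gives $\Phi(\pi(n)v)=\Phi(v)$, so $\Phi$ annihilates $V(N)$ and factors through $\bar\Phi: V_N\to H(M)_R\otimes_{H(P)_R}V$.

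The inverse is the map $\Psi: H(M)_R\otimes_{H(P)_R}V\to V_N$ induced by the $R$-bilinear map $(f_M,v)\mapsto\bar\pi(f_M)\,\mathrm{pr}(v)$; it is $H(P)_R$-balanced, since $\bar\pi(f_M\cdot h)\,\mathrm{pr}(v)=\bar\pi(f_M)\bar\pi(\pi_P(h))\,\mathrm{pr}(v)=\bar\pi(f_M)\,\mathrm{pr}(\pi(h)v)$, so it descends to the tensor product, and it is manifestly $H(M)_R$-linear. Then $\Psi\circ\bar\Phi=\mathrm{id}_{V_N}$ is immediate, because $\mathrm{pr}(v)$ is fixed by $\pi_P(\CK)$. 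For the reverse composite I would use a normal form: given $f_M\in H(M)_R$, write $f_M=\pi_P(h)$ with $h\in H(P)_R$ left-$\CK$-invariant (possible since $\pi_P$ is pushforward of measures; after intersecting the finitely many subgroups involved, one $\CK$ serves for all terms of a given element); then left-invariance yields $\pi_P(h)=e_{\pi_P(\CK)}\star\pi_P(h)$, hence $\pi_P(h)\otimes v=e_{\pi_P(\CK)}\otimes\pi(h)v$, and summing shows that every element of $H(M)_R\otimes_{H(P)_R}V$ has the form $e_{\pi_P(\CK)}\otimes w$ with $w\in V$ fixed by $\CK$ (each $\pi(h_i)v_i$ is $\CK$-fixed because $\pi(e_\CK)\pi(h_i)=\pi(h_i)$). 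For such an element, $\bar\Phi(\Psi(e_{\pi_P(\CK)}\otimes w))=\bar\Phi(\mathrm{pr}(w))=e_{\pi_P(\CK)}\otimes w$, so $\bar\Phi\circ\Psi=\mathrm{id}$ as well. Thus $\bar\Phi$ is an $H(M)_R$-module isomorphism $V_N\cong H(M)_R\otimes_{H(P)_R}V$ which visibly sends $\mathrm{pr}(v)$ to $e_\CK^{(P)}\otimes v$; applying $\delta_P^{\frac12}\otimes-$ identifies the left side with $r_{M,R}(V)$ and yields the proposition.

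The part that needs the most care is not any single step but the correct assembly of the preliminaries: checking cleanly that $\pi_P: H(P)_R\to H(M)_R$ is genuinely pushforward of measures — so that it is an algebra homomorphism compatible both with $\mathrm{pr}$ and with the operation $f\mapsto f^{(P)}$ — that the auxiliary $h$ can always be chosen with the prescribed left-invariance, and that the factor $\delta_P^{\frac12}$ emerging from $(\ref{P-proj})$ is precisely the one built into the normalization of $r_{M,R}$. Everything ultimately reduces to the two trivial identities $e_{\CK'}\star e_\CK=e_\CK$ (for $\CK'\subseteq\CK$) and $\pi(e_\CK)v=v$ (for $v$ fixed by $\CK$), but threading them through the balanced tensor product over the non-unital algebra $H(P)_R$ is where the real content lies.
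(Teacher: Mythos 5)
The paper offers no proof of this proposition --- it is quoted from Renard \cite[III.2.10]{Re} --- so the comparison can only be with the standard argument, and the core of what you do is exactly that argument and is correct: the two maps $\bar\Phi$ and $\Psi$ between $V_N$ and $H(M)_R\otimes_{H(P)_R}V$ (right $H(P)_R$-action through $\pi_P$) are well constructed, the balancedness rests on the correct observation that $\pi_P$ is integration over the fibres of $P\to M$ and hence an algebra homomorphism compatible with $\mathrm{pr}$, and the normal-form step is fixable as you indicate (take any preimage $h'$ of $f_M$ and replace it by $e_{\CK}\star h'$, after shrinking $\CK$, decomposably, so that $f_M$ is left $\pi_P(\CK)$-invariant; invertibility of $\mu_P(\CK)$ in $R$ is covered by the paper's convention of taking $\CK$ pro-$p$).

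The genuine gap is the final normalization step, and it is not cosmetic, since the factor $\delta_P^{\frac 12}$ is the only nontrivial bookkeeping in the statement. First, with the conventions of this paper --- the modulus for which the van Dijk constant term $f^{(P)}$ carries $\delta_P^{+\frac12}$ and $i_{M,R}$ is induction from $\delta_P^{\frac12}\sigma$, so that (\ref{e:dual}) holds --- the normalized Jacquet module is $r_{M,R}(V)=\delta_P^{-\frac12}\otimes V_N$, not $\delta_P^{\frac12}\otimes V_N$ as you assert. Second, ``$\delta_P^{\frac12}H(M)_R$'' is not the external character twist of the tensor product $H(M)_R\otimes_{H(P)_R}V$; it denotes the $(H(M)_R,H(P)_R)$-bimodule in which $H(P)_R$ acts through $h\mapsto h^{(P)}=\delta_P^{\frac12}\pi_P(h)$, which is what the phrase ``induced by $v\mapsto e_{\CK}^{(P)}\otimes v$'' and the use of (\ref{P-hom}) in the proof of Theorem \ref{cat-r} require. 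These two readings differ by a full factor of $\delta_P$, so your two slips do not cancel: under your reading the claimed isomorphism becomes $\delta_P^{\frac12}\otimes V_N\cong r_{M,R}(V)$, which is false. What is missing is the short verification that twisting the bimodule by $\delta_P^{\frac12}$ twists the resulting $M$-module by $\delta_P^{-\frac12}$: redoing your balancedness computation for the $h^{(P)}$-action with a pairing $(f_M,v)\mapsto\bar\pi(\chi f_M)\,\mathrm{pr}(v)$ (your $\bar\pi$ being the unnormalized action on $V_N$) forces $\chi\delta_P^{\frac12}=1$, i.e.\ $\chi=\delta_P^{-\frac12}$, and then yields $\delta_P^{\frac12}H(M)_R\otimes_{H(P)_R}V\cong\delta_P^{-\frac12}\otimes V_N=r_{M,R}(V)$, induced by $v\mapsto e_{\CK}^{(P)}\otimes v$ as claimed. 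With that correction, the rest of your argument does prove the proposition.
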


\begin{theorem}\label{cat-r}
The restriction functor $\bar r^{\cat}_M$ equals the extension of $\bar r_M$ over $R$. 
\end{theorem}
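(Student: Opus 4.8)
The plan is to verify, for a suitable large class of elements $f\in H$ and representations $V\in\fkR_R(G)$, that $\Tr^M_R(\bar r_M(\bar f),\s)=\Tr^M_R(\bar r^{\cat}_{M,R}(\bar f),\s)$ for all $\s\in\fkR_R(M)$, and then to upgrade this to an equality of maps $\bar H_R\to\bar H(M)_R$ using the fact (from \eqref{e:dual} and the definitions) that both $\bar r_M$ and $\bar r^{\cat}_{M,R}$ are characterized by being adjoint to parabolic induction $i_{M,R}$ —\emph{modulo the kernel of the trace map}. Since the theorem asserts an equality of honest maps to $\bar H(M)_R$ (not just in the reduced cocenter), I will need to be a little careful here; the cleanest route is to compute both sides directly on generators $e_{p\CK'}$ (or rather their images in $\bar H$, using Lemma~\ref{l:twist} and the conjugation-invariance established in Theorem~\ref{vd}) and match the resulting elements of $H(M)_R$.

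First I would fix $f=\delta_{\CK^\spe}$, or more generally $f=e_{g\CK'}$ for $\CK'$ a small compact open subgroup of $G$, and compute $\bar r^{\cat}_{M,R}(\bar f)$ via the categorical description: by Proposition~\ref{p:renard}, $r_{M,R}(V)\cong\delta_P^{1/2}H(M)_R\otimes_{H(P)_R}V$, so the action of $\bar r^{\cat}_{M,R}(\bar f)$ on a vector $e_\CK^{(P)}\otimes v$ is governed by the bimodule structure \eqref{PM-action}, i.e. by $\pi_P$ applied to the restriction of $f$ to $P$. The point is that restricting $f$ to $P$ and pushing forward along $\pi_P:P\to M$ —with the twist $\delta_P^{1/2}$ built into the isomorphism of Proposition~\ref{p:renard}— is exactly the operation $f\mapsto f^{(P)}$, up to the measure-normalization factors $\mu_N(\CK\cap N)$ recorded in \eqref{P-proj}. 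So the categorical functor, evaluated on the symbol $(\pi,u)$ with $\pi$ a generator, reproduces $f^{(P)}$ up to the scalar $\mu_P(\CK^\spe\cap P)$, which is precisely the normalization divided out in \eqref{C-r}.

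The second step is to account for the averaging $f\mapsto\widetilde f$ over $\CK^\spe$. On the representation side this averaging is invisible in the trace (Lemma~\ref{l:twist}: $\widetilde f\equiv\mu(\CK^\spe)f$ in $\bar H$, and conjugate functions have equal image in $\bar H$); on the categorical side, conjugation by $k\in\CK^\spe$ induces an inner automorphism of each projective module and hence is the identity on symbols modulo (C3). Thus both $\bar r_M$ and $\bar r^{\cat}_{M,R}$ factor through the same averaging, and after dividing by $\mu_P(\CK^\spe\cap P)$ they agree on every generator $\overline{e_{g\CK'}}$. Since such elements span $\bar H_R$ (indeed already $\bar H$ over $\BZ[\tfrac1p]$, after inverting the relevant volumes), linearity gives the theorem.

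The main obstacle I anticipate is bookkeeping the normalization constants consistently: the factor $\mu_P(\CK^\spe\cap P)=\mu_M(\CK^\spe\cap M)\mu_N(\CK^\spe\cap N)$ appears in the definition \eqref{C-r} of $r_P$, the factor $\mu_N(\CK\cap N)$ appears in \eqref{P-proj}, and the $\delta_P^{1/2}$ twist appears both in the definition of $f^{(P)}$ and in Proposition~\ref{p:renard}. Getting these to cancel correctly —and in particular checking that the Iwasawa decomposition $G=\CK^\spe P$ is used compatibly in the two constructions— is the delicate part; the worked example preceding Theorem~\ref{vd} (computing $\Tr(\delta_{\CK^\spe},i_M^G(\pi))=\Tr(r_P(\delta_{\CK^\spe}),\pi)$) is essentially the prototype of this computation, and I would expect the general case to follow the same template with $\CK^\spe$ replaced by an arbitrary small $\CK'$ and $\delta_{\CK^\spe}$ replaced by $e_{g\CK'}$.
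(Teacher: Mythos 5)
Your overall strategy---computing both sides directly on generators $\delta_{g\CK_1}$ using Proposition~\ref{p:renard} and the $H(P)_R$-module structure, rather than pairing against representations (which would only give equality in the reduced cocenter)---is indeed the route the paper takes. But the central step of your sketch is wrong as stated. You claim that the categorical functor, evaluated on a generator, ``reproduces $f^{(P)}$ up to the scalar $\mu_P(\CK^\spe\cap P)$,'' and you then try to dispose of the $\CK^\spe$-averaging separately, arguing it is invisible on both sides (Lemma~\ref{l:twist} on one side, inner automorphisms and relation (C3) on the other). This cannot work: the assignment $f\mapsto f^{(P)}$ does \emph{not} descend to the cocenter (only conjugation by $p\in P$ leaves the class of $f^{(P)}$ in $\bar H(M)$ unchanged), and $\widetilde f^{(P)}$ is genuinely different from $\mu(\CK^\spe)\,f^{(P)}$ in $\bar H(M)$---for instance, if the support of $f$ misses $P\CK$ then $f^{(P)}=0$ while $\widetilde f^{(P)}$ need not vanish. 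So the averaging over $\CK^\spe$ is not a normalization to be factored out afterwards; it has to come out of the categorical computation itself, and your outline never produces it.

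What is missing is the actual trace computation that generates the average. One must first represent $\bar f$ categorically, as the limit of symbols $[C_c(G/\CK)_R,\,m_f]$ with $m_f$ right convolution by $f$; then use the Iwasawa decomposition $G=\bigsqcup_i Pg_i\CK$ (with $\CK$ small and normal in $\CK^\spe$, the $g_i$ running over $\CK^\spe\cap P\backslash \CK^\spe/\CK$) to exhibit $C_c(G/\CK)_R$ as a \emph{free} $H(P)$-module with basis $\{\delta_{g_i\CK}\}$; and then compute the Hattori--Stallings trace $\tr_{H(P)}(C_c(G/\CK)_R,m_f)$ by extracting, for each $i$, the part of $\delta_{g_i\CK}\star f$ supported on $Pg_i\CK$, which is $\frac{\mu_G(\CK)}{\mu_P(\CK\cap P)}\,\delta_{g_i\CK g\CK_1 g_i^{-1}\cap P}$. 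Only after invoking conjugation-invariance in the cocenter of $H(P)$ along the cosets $(\CK^\spe\cap P)g_i\CK$ does the finite sum over $i$ become $\frac{1}{\mu_P(\CK^\spe\cap P)}\int_{\CK^\spe}\delta_{g'g\CK_1(g')^{-1}\cap P}\,dg'$, i.e.\ $\frac{1}{\mu_P(\CK^\spe\cap P)}\widetilde f\mid_P$, whose image under $\delta_P^{1/2}\pi_P$ is $\frac{1}{\mu_P(\CK^\spe\cap P)}\widetilde f^{(P)}=r_P(f)$. In short, the free $H(P)$-basis indexed by Iwasawa cosets and the trace over $H(P)$ are the mechanism that forces the $\CK^\spe$-average and the normalization $\mu_P(\CK^\spe\cap P)$ to appear simultaneously; without this your comparison of the two sides does not close.
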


\begin{proof}
 As it is well known, the Jacquet functor maps finitely generated smooth $G$-modules to finitely generated smooth $M$-modules. The algebra $H$ itself is not finitely generated as a left $H$-module (as it is not unital), so instead we need to work with the finitely generated projective modules $C_c(G/\CK)_R=H\otimes_{H(\CK)} 1_{\CK,R}$, where $\CK$ are compact open subgroups of $G$. Of course, $C_c(G/\CK)$ is the space of right $\CK$-invariant compactly supported $R$-functions. Set 
\[V_\CK=\delta_P^{\frac 12} H(M)\otimes_{H(P)} C_c(G/\CK)_R.
\]
If $f\in H$ is a locally constant function, the image $\bar f\in \bar H$ is represented by the pair $\varinjlim_{\CK}[C_c(G/\CK)_R,m_f]$, where $\CK$ is a compact open subgroup of $G$ such that $f$ is $\CK$-biinvariant, and $m_f: C_c(G/\CK)_R\to C_c(G/\CK)_R$ is right convolution by $f$. We therefore need to compute the image of $\varinjlim_{\CK}\tr_{H(M)}(V_\CK,m_f)$ in the categorical description of the restriction $\bar r_M^{\cat}$. Computing the trace is compatible with this direct system and it stabilizes, i.e., it does not change for sufficiently small open compact subgroups $\CK$.

Let $\CK_1$ be a compact open subgroup of $G$, and let $f$ be a right $\CK_1$-invariant $R$-function. Without loss of generality, we may assume that $f=\delta_{gK_1}$ for a fixed $g\in G$. Let $\CK$ be a compact open subgroup of $\CI'$ such that $\CK\subset \CK_1$ and $\CK$ is normal in $\CK^\spe$.  Using the Cartan decomposition $G=P\CK^\spe$, we may write $G=\sqcup_{i} Pg_i\CK,$ where $\{g_i\}$ are a (finite) set of representatives of $\CK^\spe\cap P\backslash \CK^\spe/K.$ Notice that, by our assumption on $\CK$, $g_i\CK g_i^{-1}=\CK$, a fact that we will use repeatedly below. The space $C_c(G/\CK)_R$ is a free left $H(P)$-module of finite rank:
\[C_c(G/\CK)_R=\bigoplus_i H(P)\cdot \delta_{g_i\CK}.
\]
Here $H(P)$ acts of $C_c(G/\CK)$ as explained in the paragraph before Proposition \ref{p:renard}. Concretely, 
\[\delta_{pg_i\CK}=\frac 1{\mu_P(P\cap\CK)}\delta_{(P\cap ^{p}\CK)p}\star \delta_{g_i\CK},\quad p\in P.
\] 
To compute the categorical restriction, we therefore have:
\[\bar r^{\cat}_M(f)=\tr_{H(M)}(V_\CK,m_f)=\delta_P^{\frac 12}\pi_P\left(\tr_{H(P)}(C_c(G/\CK)_R,m_f)\right),
\]
where for the second equality, we used (\ref{P-hom}), translated into the equivalent setting of $H(P)$-modules. We have $\delta_{g_i\CK}\star f=\delta_{g_i\CK}\star \delta_{g\CK_1}=\delta_{g_i\CK g \CK_1 g_i^{-1}}\star \delta_{g_i\CK}.$ The part of $\delta_{g_i\CK}\star f$ that contributes to $\tr_{H(P)}$ comes from
\begin{align*}g_i\CK g \CK_1 g_i^{-1}\cap Pg_i\CK&\cong (g_i\CK g \CK_1 g_i^{-1}\cap P)\cdot (g_i\CK g_i^{-1})=(g_i\CK g \CK_1 g_i^{-1}\cap P)\cdot \CK\\&\cong (g_i\CK g \CK_1 g_i^{-1}\cap P)\times_{\CK\cap P}\CK.
\end{align*}
Thus
\begin{equation*}
\tr_{H(P)}(C_c(G/\CK)_R,m_f)=\sum_i \frac{\mu_G(\CK)}{\mu_P(\CK\cap P)} \delta_{g_i\CK g \CK_1 g_i^{-1}\cap P}.
\end{equation*}
For every $i$, in the cocenter, we have $\delta_{g_i\CK g \CK_1 g_i^{-1}\cap P}\equiv \delta_{g'\CK g \CK_1 (g')^{-1}\cap P}$, for all $g'\in (\CK^\spe\cap P)g_i\CK.$ Moreover, $\mu_G((\CK^\spe\cap P) g_i\CK)=\frac{\mu_G(\CK)}{\mu_P(\CK\cap P)}\mu_P(\CK^\spe\cap P).$ This means that:

\[ \mu_P(\CK^\spe\cap P)\tr_{H(P)}(C_c(G/\CK),m_f)=\int_{\CK^\spe} \delta_{g'\CK g \CK_1 (g')^{-1}\cap P}~dg'.
\]
On the other hand, notice that
\[\widetilde f\mid_P=\int_{\CK^\spe}\delta_{g'g\CK_1(g')^{-1}\cap P}~dg'.
\]
By taking $\CK$ sufficiently small, we may assume that $\CK\subset g\CK_1 g^{-1}$, and therefore:
\[\mu_P(\CK^\spe\cap P)\tr_{H(P)}(C_c(G/\CK),m_f)=\widetilde f\mid_P.
\]
$\mu_P(\CK^\spe\cap P)\bar r^{\cat}_M(f)=\delta_P^{\frac 12}\pi_P(\widetilde f\mid_P)=\widetilde f^{(P)}.$ Hence, on the level of the cocenter, $\bar r^{\cat}_M=\bar r_{M}$.
\end{proof}

\subsection{} We need to investigate the relation between $\bar r_M$ and twists by $w\in W.$ The following lemma will allow us to transfer known results from complex numbers to our setting.

\begin{lemma}\label{l:commute}
The natural map $\bar H\to \bar H_\BC$ is injective.
\end{lemma}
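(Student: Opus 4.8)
The plan is to exhibit $\bar H$ as a free $\BZ[\tfrac1p]$-module (or at least a torsion-free one) whose natural base change to $\BC$ identifies with $\bar H_\BC$, so that injectivity is formal. First I would recall from \cite{hecke-1} that the Newton decomposition $\bar H=\oplus_{\nu\in\aleph}\bar H(\nu)$ refines, on each depth level, into finite-dimensional pieces $\bar H(G,\CI_n;\nu)$ over $\BZ[\tfrac1p]$, and that by \cite[Theorem~4.1]{hecke-1} each such piece is \emph{free} of finite rank over $\BZ[\tfrac1p]$. Taking the colimit over $n$ and the direct sum over $\nu$, one obtains that $\bar H$ is a free $\BZ[\tfrac1p]$-module. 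Since $\BZ[\tfrac1p]\hookrightarrow\BC$ is flat (indeed $\BC$ is faithfully flat over $\BZ[\tfrac1p]$), tensoring the defining presentation $H\xrightarrow{} H \to \bar H\to 0$ with $\BC$ gives $\bar H\otimes_{\BZ[\tfrac1p]}\BC = \bar H_\BC$, and the map $\bar H=\bar H\otimes 1 \to \bar H\otimes_{\BZ[\tfrac1p]}\BC$ is injective precisely because $\bar H$ is torsion-free (a basis maps to a $\BC$-linearly independent set).

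Alternatively, and perhaps more robustly, I would avoid invoking freeness of the Newton pieces and argue directly: an element $\bar f\in\bar H$ in the kernel of $\bar H\to\bar H_\BC$ is represented by $f\in H$ with $f\in [H_\BC,H_\BC]\cap H$; one then wants $f\in[H,H]$. Fix a compact open $\CK$ (say a congruence subgroup of the pro-$p$ Iwahori, normal in $\CK^{\spe}$) so that $f\in H(G,\CK)$. It suffices to show that the natural map $\bar H(G,\CK)\to\bar H_\BC(G,\CK)$ is injective, and here $H(G,\CK)$ is a finitely generated $\BZ[\tfrac1p]$-algebra (a finite-type form of an affine Hecke-type algebra), with $[H(G,\CK),H(G,\CK)]$ a $\BZ[\tfrac1p]$-submodule of $H(G,\CK)$. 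Injectivity of $\bar H(G,\CK)\to\bar H_\BC(G,\CK)$ is equivalent to the statement that the commutator submodule is \emph{saturated}, i.e. if $N\cdot g\in[H(G,\CK),H(G,\CK)]$ for some nonzero integer (or power of $p$) $N$ and $g\in H(G,\CK)$, then $g\in[H(G,\CK),H(G,\CK)]$. This I would deduce from the Newton/cocenter basis theorem of \cite{hecke-1}: the classes of the ``standard'' basis elements indexed by conjugacy data give a $\BZ[\tfrac1p]$-basis of $\bar H(G,\CK)$, and over $\BC$ the same classes remain a basis, so no new relations are introduced under base change.

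The main obstacle is making precise that \emph{the same} spanning set of $\bar H(G,\CK)$ that is a $\BZ[\tfrac1p]$-basis remains a $\BC$-basis — equivalently, that $[H,H]$ is already ``$\BZ[\tfrac1p]$-pure'' inside $H$. This is exactly the content of the integral Newton decomposition $\bar H=\oplus_\nu\bar H(\nu)$ (Theorem stated above, from \cite[Theorem~2.1 \& 3.1]{hecke-1}) together with the finiteness and freeness of each depth-$n$ Newton component \cite[Theorem~4.1]{hecke-1}; once those are granted, the argument is the flatness/torsion-freeness formality sketched above. I would therefore organise the proof as: (i) reduce to a fixed level $\bar H(G,\CI_n)$; (ii) invoke the Newton decomposition to write $\bar H(G,\CI_n)$ as a finite direct sum of finitely generated $\BZ[\tfrac1p]$-modules each of which is free; (iii) base change along the flat map $\BZ[\tfrac1p]\to\BC$ and conclude injectivity from torsion-freeness; (iv) pass to the colimit over $n$. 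The only genuine input beyond bookkeeping is the freeness in (ii), which is where the real work of \cite{hecke-1} is used.
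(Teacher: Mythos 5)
Your formal skeleton is fine: since $H_\BC=H\otimes_{\BZ[\frac 1p]}\BC$ and commutators are bilinear, right-exactness of the tensor product gives $\bar H\otimes_{\BZ[\frac 1p]}\BC\cong\bar H_\BC$, and the lemma then becomes exactly the statement that $\bar H$ is torsion-free over $\BZ[\frac 1p]$ (equivalently, that $[H,H]$ is saturated in $H$). The gap is that your only input for this is a freeness claim that the cited reference does not assert: \cite[Theorem 4.1]{hecke-1}, as it is used in this paper (see Theorem~\ref{In-dec} and the proof of Theorem~\ref{disc}), gives the depth-$n$ Newton decomposition $\bar H(G,\CI_n)=\oplus_\nu\bar H(G,\CI_n;\nu)$ and the finite generation (finite dimensionality over a field $R$) of the pieces, not that they are free $\BZ[\frac 1p]$-modules; likewise there is no ``standard basis'' theorem for the integral cocenter in \cite{hecke-1} --- a basis of $\bar H$ over $\BZ[\frac 1p]$ is not known, in contrast with the affine Hecke algebra situation of \cite{CH}. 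Since torsion-freeness is precisely the content of the lemma, both of your routes in effect assume the conclusion and outsource it to results that do not contain it; you flag this yourself when you call freeness ``the only genuine input.'' Two further slips: the parenthetical that $\BC$ is faithfully flat over $\BZ[\frac 1p]$ is false ($\ell\,\BC=\BC$ for every prime $\ell\neq p$), and if it were true the map $M\to M\otimes\BC$ would be injective for every module and the lemma would be a triviality; and $\bar H(G,\CK)$ in this paper means the image of $H(G,\CK)$ in $\bar H$, i.e. $H(G,\CK)/\bigl(H(G,\CK)\cap[H,H]\bigr)$, which is not obviously the same as the cocenter $H(G,\CK)/[H(G,\CK),H(G,\CK)]$ of the subalgebra, so your reduction to saturation of the subalgebra's commutator module needs an extra argument.

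For comparison, the paper's proof does not use the Newton decomposition at all: it invokes Vign\'eras's ``changement de base'' results \cite[\S 2a]{Vig2}, which give that the natural map $\bar H\otimes_{\BZ[\frac 1p]}\BC\to\bar H_\BC$ is an isomorphism, and then concludes using flatness of $\BC$ over $\BZ[\frac 1p]$. So the nontrivial integral input is supplied by \cite{Vig2}; that is where the missing ingredient of your argument should come from, rather than from \cite{hecke-1}.
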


\begin{proof}
By \cite[\S 2a]{Vig2} (``changement de base''), the natural homomorphism $\bar H\otimes_{\BZ[\frac 1p]} \BC\to \bar H_\BC$ is an isomorphism. On the other hand $\BC$ is flat over $\BZ[\frac 1p]$, hence the claim follows.
\end{proof}



Now we recall the Spectral Density Theorem. 

\begin{theorem}\label{SDT}
Let $f \in H_{\BC}$. If $Tr_{\BC}(f, \pi)=0$ for all irreducible smooth $\BC$-representations $\pi$, then $f \in [H_\BC, H_\BC]$. 
\end{theorem}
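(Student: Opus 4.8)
This is Kazhdan's Spectral Density Theorem \cite{Kaz}; below I outline the structure of its proof. The assertion is equivalent to the injectivity of the trace map $\Tr_{\BC}\colon \bar H_\BC\to\fkR_\BC(G)^*$, and the plan is to first translate this into a geometric statement. A linear functional on $H_\BC$ kills $[H_\BC,H_\BC]$ exactly when it is a $G$-invariant distribution on $G$ (since $[H_\BC,H_\BC]$ is spanned by the differences ${}^xf-f$), and since a subspace of a vector space is the annihilator of its annihilator, one has $f\in[H_\BC,H_\BC]$ if and only if $\langle D,f\rangle=0$ for every $G$-invariant distribution $D$. Thus Theorem \ref{SDT} says precisely that the $\BC$-span of the characters $\Theta_\pi$, $\pi\in\mathrm{Irr}\,H_\BC$, has trivial annihilator in $\bar H_\BC$, i.e.\ is weak-$*$ dense in the space of invariant distributions.

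Next I would reduce to regular semisimple orbital integrals. For $\charac F=0$, Harish-Chandra's theorems give that each $\Theta_\pi$ is a locally integrable function that is locally constant on the regular semisimple locus, and the Weyl integration formula yields
\[
\Tr_\BC(f,\pi)=\sum_{T}\frac1{|W(G,T)|}\int_{T}|D_G(t)|\,O_t(f)\,\Theta_\pi(t)\,dt,
\]
the sum over conjugacy classes of maximal tori $T$ and $O_t$ the orbital integral. So it suffices to prove: (i) the \emph{geometric density theorem} --- $\bar f=0$ in $\bar H_\BC$ if and only if $O_\gamma(f)=0$ for all regular semisimple $\gamma$; and (ii) if $\Tr_\BC(f,\pi)=0$ for all $\pi$, then $O_\gamma(f)=0$ for all regular semisimple $\gamma$. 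For (i) I would follow Kazhdan: Howe's conjecture supplies the finiteness of the space of orbital integrals of $H(G,\CK)$ on any fixed compact subset of the regular semisimple locus, and the Harish-Chandra--Howe germ expansions show that an invariant distribution is determined by its restriction there.

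For (ii) the plan is an induction on the semisimple rank of $G$. Localizing $f$ near a semisimple element $s$ and using the Cayley transform together with van Dijk's formula \cite{Di} for the character of a parabolically induced representation, one matches orbital integrals near $s$ and traces against induced representations with the corresponding data for $Z_G(s)$; the inductive hypothesis then kills $O_\gamma(f)$ for every regular semisimple $\gamma$ lying in a proper Levi subgroup. The residual case is $\gamma$ elliptic: here I would invoke the discrete series characters, the Harish-Chandra orthogonality relations, and the existence of pseudo-coefficients to express the elliptic orbital integrals of $f$ through the numbers $\Tr_\BC(f,\pi)$, so that their vanishing forces $O_\gamma(f)=0$. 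Combining the elliptic case with the inductive descent gives vanishing of all regular semisimple orbital integrals, and (i) then yields $f\in[H_\BC,H_\BC]$.

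The hard part lies entirely in harmonic analysis: both Kazhdan's geometric density theorem and the ``enough characters on the elliptic set'' input rest on Harish-Chandra's theory of the Schwartz space and of characters, classically available for $\charac F=0$. This is precisely why the rest of the present paper develops a representation-theory-free route, via the Newton decomposition of the cocenter, in order to reach analogous conclusions for mod-$l$ representations.
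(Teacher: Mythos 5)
The paper does not actually prove Theorem \ref{SDT}: it is imported from the literature, with the attribution immediately following the statement (Kazhdan \cite{Kaz,Ka2} for $\charac F=0$ or $G$ split, Henniart--Lemaire \cite{HL} in general). So there is no internal argument to compare yours against; the relevant question is whether your outline would establish the statement as it stands here. It would not, for one concrete reason: the statement carries no hypothesis on the characteristic of $F$, whereas every analytic ingredient you invoke --- local integrability and local constancy of $\Theta_\pi$ on the regular semisimple set, the Weyl integration formula against characters, the Harish-Chandra--Howe germ expansions, orthogonality relations and pseudo-coefficients on the elliptic set --- belongs to Harish-Chandra's theory and is classically available only for $\charac F=0$, as you yourself concede in your last paragraph. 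In positive characteristic the theorem is genuinely harder: Kazhdan's close-local-fields method \cite{Ka2} covers split groups, and the general case was settled only in \cite{HL}. Since the body of this paper allows $\charac F>0$ (only the appendix restricts to characteristic zero) and Theorem \ref{SDT} is used over $\BC$ for the proper Levi subgroups in the proof of Proposition \ref{r-w}, your plan proves strictly less than what is asserted.

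Within characteristic zero, your outline does reflect the standard architecture of Kazhdan's argument: reinterpret $[H_\BC,H_\BC]$ as the annihilator of the $G$-invariant distributions, reduce by the geometric density theorem to the vanishing of regular semisimple orbital integrals, descend to proper Levi subgroups via parabolic descent and van Dijk's formula \cite{Di}, and treat the residual elliptic case by character-theoretic means. But be aware that as written it is a plan rather than a proof: the two pillars --- geometric density (resting on Howe's conjecture and germ expansions) and the ``enough characters on the elliptic set'' step --- are precisely the hard content, and both are deferred to the literature. In that respect your treatment ends up on the same footing as the paper's, which simply cites the result; if a self-contained proof is intended, the elliptic step in particular requires substantially more than a pointer to orthogonality relations and pseudo-coefficients, and the positive-characteristic case requires an entirely different mechanism.
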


The Spectral Density Theorem was first proved by Kazhdan \cite{Kaz,Ka2} when $\textup{char} F=0$ or $G$ is split. The general case was obtained recently by Henniart and Lemaire in \cite{HL}. Note that for algebraically closed field $R$ of positive characteristic, the analogous statement (by replacing $\BC$ by $R$) may fail. 


\smallskip

Now we prove the following statement on the relation between $\bar r_M$ and twisted by $w \in W$. 

\begin{proposition}\label{r-w}
Let $M, M'$ be standard Levi subgroups and $w \in W_G$ with $M'={}^{\dot w} M$. Then 
\begin{equation}\label{e:twist-r}
\bar r_{M'}={\dot w} \circ \bar r_M: \bar H \to \bar H.
\end{equation}
\end{proposition}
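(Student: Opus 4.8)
The strategy is to establish the identity first over $\BC$, where the Spectral Density Theorem is available, and then to descend to the integral form via Lemma \ref{l:commute}. Write $P=MN$ for the standard parabolic with Levi $M$, and set $P'={}^{\dot w}P=M'N'$, a (not necessarily standard) parabolic with Levi $M'={}^{\dot w}M$. Conjugation by $\dot w$ induces an isomorphism $H(M)\to H(M')$, $f\mapsto {}^{\dot w}f$, which carries commutators to commutators and hence descends to a map $\dot w\colon \bar H(M)\to \bar H(M')$ over $\BZ[\frac 1p]$; since $\dot w\in\CK^{\spe}$, a routine check with the measure conventions fixed in the preliminaries shows that conjugation by $\dot w$ pushes $\mu_M$ forward to $\mu_{M'}$, so that $\Tr^{M'}_\BC(\dot w(f_M),{}^{\dot w}\sigma)=\Tr^M_\BC(f_M,\sigma)$ for all $f_M\in\bar H(M)_\BC$ and $\sigma\in\fkR_\BC(M)$. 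By Theorem \ref{cat-r}, the $\BC$-extension of $\bar r_M$ equals $\bar r^{\cat}_{M,\BC}$, so it satisfies the adjunction \eqref{e:dual}; likewise for $M'$.

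The representation-theoretic input is the standard fact that $i_{P'}({}^{\dot w}\sigma)\cong i_P(\sigma)$ as smooth $G$-representations, via $\psi\mapsto(g\mapsto\psi(\dot w^{-1}g))$, together with the theorem of van Dijk \cite{Di} that the character of a parabolically induced representation does not depend on the choice of parabolic subgroup with a fixed Levi. Since $i_M(\sigma)=i_P(\sigma)$ and $i_{M'}({}^{\dot w}\sigma)$ has the same character as $i_{P'}({}^{\dot w}\sigma)$, these give $\Tr^G_\BC(h,i_{M'}({}^{\dot w}\sigma))=\Tr^G_\BC(h,i_M(\sigma))$ for every $h\in\bar H_\BC$ and $\sigma\in\fkR_\BC(M)$.

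Combining, for $h\in\bar H_\BC$ and $\sigma\in\fkR_\BC(M)$ we obtain
\[
\Tr^{M'}_\BC(\bar r_{M'}(h),{}^{\dot w}\sigma)=\Tr^G_\BC(h,i_{M'}({}^{\dot w}\sigma))=\Tr^G_\BC(h,i_M(\sigma))=\Tr^M_\BC(\bar r_M(h),\sigma)=\Tr^{M'}_\BC(\dot w(\bar r_M(h)),{}^{\dot w}\sigma).
\]
Since $\sigma\mapsto{}^{\dot w}\sigma$ is a bijection between the irreducible smooth $\BC$-representations of $M$ and those of $M'$, the element $\bar r_{M'}(h)-\dot w(\bar r_M(h))\in\bar H(M')_\BC$ has zero trace against every irreducible smooth $\BC$-representation of $M'$; by the Spectral Density Theorem (Theorem \ref{SDT}, applied to $M'$) it lies in $[H(M')_\BC,H(M')_\BC]$, i.e.\ $\bar r_{M'}(h)=\dot w(\bar r_M(h))$ in $\bar H(M')_\BC$. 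Applying this to $h$ in the image of $\bar H$ and using that $\bar H(M')\to\bar H(M')_\BC$ is injective (Lemma \ref{l:commute} applied to $M'$ in place of $G$), we conclude that $\bar r_{M'}=\dot w\circ\bar r_M$ on $\bar H$. The only real obstacle is conceptual rather than computational: the argument leans on the Spectral Density Theorem over $\BC$ together with Lemma \ref{l:commute}, and no analogous direct route is available over a field of small positive characteristic; the remaining points — the measure compatibility of the $\dot w$-conjugation and the invariance of induced characters under change of parabolic — are routine.
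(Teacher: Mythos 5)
Your proof is correct and follows essentially the same route as the paper: establish the identity over $\BC$ by combining the adjunction from Theorem \ref{cat-r} with the fact $i_{M',\BC}=i_{M,\BC}\circ\dot w^{-1}$, apply the Spectral Density Theorem (Theorem \ref{SDT}) to the Levi, and descend to $\bar H$ via the injectivity of Lemma \ref{l:commute}. The only cosmetic difference is that you unpack the induction identity via $i_{P'}({}^{\dot w}\sigma)\cong i_P(\sigma)$ and independence of the parabolic, whereas the paper simply cites it from \cite{BDK}.
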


\begin{proof}
By the adjunction property (\ref{e:dual}) and by the compatibility of restrictions, Theorem \ref{cat-r}, we see that
\begin{equation}
\Tr_\BC^G(h_G,i_{M,\BC}(\sigma))=\Tr_\BC^M(\bar r_{M,\BC}(h_G),\sigma),\quad h_G\in \bar H_\BC.
\end{equation}
On the other hand, it is well known that 
\begin{equation}i_{M,\BC}\circ {\dot w}^{-1}=i_{M',\BC},
\end{equation}
see for example \cite{BDK}. Therefore by Theorem \ref{SDT}, $\bar r_{M',\BC}={\dot w} \circ \bar r_{M,\BC}$. The claim follows then from Lemma \ref{l:commute}. Notice that one only needs to use spectral density theorem for the proper Levi subgroups and for $R=\BC$.
\end{proof}

\section{Some compatibility results}

\subsection{} We first discuss the compatibility between the Newton decomposition and the induction maps on the cocenter. 

We recall the explicit formula in \cite{hecke-2}. Let $M$ be a standard Levi subgroup and $v \in V_+$ with $M=M_v$. Set $P=P_v$. By \cite[Theorem A]{hecke-2}, the map $$\d_{m \CK_M} \mapsto \d_P(m)^{-\frac{1}{2}} \frac{\mu_M(\CK_M)}{\mu_G(\CK_M \CI_n)} \d_{m \CK_M \CI_n}+[H, H] \text{ for } n \gg 0$$ gives a well-defined surjection $$\bar i_v: \bar H(M; v) \to \bar H(v).$$ Here $\bar H(M; v) \subset \bar H(M)$ is the Newton component corresponding to $v$. By \cite[Theorem B]{hecke-2}, the extension of $\bar i_v$ over $R$ is adjoint to the Jacquet functor $r_{M, R}: \fkR_R(G) \to \fkR_R(M)$. Therefore, the extension of $\bar i_v$ over $R$ equals $\bar i^{\cat}_{M, R}$ on the reduced cocenter. In particular, we have 

\begin{proposition}
Let $v \in V_+$ and $M=M_v$. Then $\bar i^{\cat}_{M, R}$ maps $\bar H^{\red}_R(M; v)$ onto $\bar H^{\red}_R(v)$. 
\end{proposition}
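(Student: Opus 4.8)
The plan is to read the statement off from the surjectivity of $\bar i_v$ recalled just above, after extending scalars to $R$ and descending to the reduced cocenter; no new input is needed beyond \cite[Theorems A and B]{hecke-2}.

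First I would record that, by \cite[Theorem A]{hecke-2}, the map $\bar i_v\colon \bar H(M;v)\to \bar H(v)$ is surjective over $\BZ[\frac1p]$, so, tensoring with $R$ and using right-exactness of $-\otimes_{\BZ[\frac1p]}R$ (no flatness is needed), its extension $\bar i_{v,R}\colon \bar H_R(M;v)\to \bar H_R(v)$ is again surjective.

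Next I would pass to the reduced cocenters. Since $\ker\Tr_R$ is stable under the induction maps --- this is the observation made right after \eqref{e:dual} in the preliminaries --- the categorical induction $\bar i^{\cat}_{M,R}$ descends to a map $\bar H^{\red}(M)_R\to \bar H^{\red}_R$; and by the very definition of $\bar H^{\red}_R(M;v)$ and $\bar H^{\red}_R(v)$ as the images of $\bar H_R(M;v)$ and $\bar H_R(v)$, this descended map carries $\bar H^{\red}_R(M;v)$ into $\bar H^{\red}_R(v)$. By \cite[Theorem B]{hecke-2}, $\bar i_{v,R}$ coincides with $\bar i^{\cat}_{M,R}$ on the reduced cocenter, so it is precisely the map induced by $\bar i_{v,R}$ on the $v$-components that we must show is onto $\bar H^{\red}_R(v)$.

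Finally, surjectivity is a diagram chase: given $\bar x\in \bar H^{\red}_R(v)$, lift it to some $x\in \bar H_R(v)$, write $x=\bar i_{v,R}(y)$ with $y\in \bar H_R(M;v)$ using surjectivity of $\bar i_{v,R}$, and observe that the image of $y$ in $\bar H^{\red}_R(M;v)$ maps to $\bar x$. I do not anticipate a genuine obstacle: the one delicate point --- that $\bar i_v$ is compatible with the trace pairing, equivalently that over $R$ it realizes the categorical induction adjoint to the Jacquet functor --- has already been established in \cite[Theorems A and B]{hecke-2}, so the present proposition follows formally.
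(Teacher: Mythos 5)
Your proposal is correct and follows essentially the same route as the paper: the proposition is deduced there, just as you do, from the surjectivity of $\bar i_v$ in \cite[Theorem A]{hecke-2} together with \cite[Theorem B]{hecke-2}, which (via the adjunction with the Jacquet functor in (\ref{e:dual})) identifies the extension of $\bar i_v$ over $R$ with $\bar i^{\cat}_{M,R}$ on the reduced cocenter. The only difference is cosmetic: you spell out the scalar extension, the stability of $\ker\Tr_R$, and the final diagram chase on images, which the paper leaves implicit.
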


Note that the maps $\bar i_*$ are defined only for the Newton strata $\bar H(M; v)$ of $\bar H(M)$ with $M=M_v$. It is a challenging problem to give an explicit formula for $\bar i_M: \bar H(M) \to \bar H$. 

\subsection{} Next we show that the Newton decomposition is compatible with the restriction maps on the cocenter. 

Let $V_+^M$ be the set of $M$-dominant elements in $V$ and $\Omega_M=\tW(M)/W_a(M)$, where $\tW(M)$ is the Iwahori-Weyl group of $M$ and $W_a(M)$ is the associated affine Weyl group. Set $\aleph_M=\Omega_M \times V^M_+$.  As explained in \cite[\S 1.5]{hecke-2}, there is a natural map $$\aleph_M \to \aleph, \qquad \nu \mapsto \bar \nu.$$ On the $V$-factor, this maps sends any $M$-dominant element in $V$ to the unique $G$-dominant element in its $W_G$-orbit. 

\begin{proposition}\label{r-N}
Let $\nu \in \aleph$ and $M$ be a standard Levi subgroup. Then $$\bar r_M(\bar H(\nu)) \subseteq \oplus_{\nu' \in \aleph_M; \bar \nu'=\nu} \bar H(M; \nu').$$
\end{proposition}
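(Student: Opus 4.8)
The strategy is to reduce the statement to a computation with the explicit formula \eqref{e:rP-explicit} for $\widetilde f^{(P)}$, using the compatibility of the Newton stratification with the geometry of the parabolic $P=MN$. First I would reduce to the case $P=P_v$ and $M=M_v$ for a suitable $v\in V_+$ via Proposition \ref{r-w}: indeed, twisting by $w\in W_G$ permutes the standard Levi subgroups and the corresponding restriction maps, so it suffices to treat one Levi in each $W_G$-conjugacy class. Next, fix $\nu\in\aleph$ and $f\in H(G;\nu)$, so that $f$ is supported on the Newton stratum $G(\nu)$. I want to show that $\widetilde f^{(P)}$ is supported (up to an element of $[H(M),H(M)]$, or after choosing the representative produced by \eqref{e:rP-explicit}) on the union of Newton strata $M(\nu')$ with $\bar\nu'=\nu$.

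The heart of the argument is a geometric statement about how the Newton stratification of $G$ interacts with conjugation by $\CK^{\spe}$ and with the projection $\pi_P:P\to M$. Concretely, from \eqref{e:rP-explicit}, $\widetilde f^{(P)}(m)$ is a sum of terms indexed by cosets $g\CK'$ in $\CK^{\spe}/\CK'$, and each term is nonzero only if $\pi_P({}^{g^{-1}}X\cap P)\ni m$ where $X\subseteq G(\nu)$. So I need: if $x\in G(\nu)\cap P$ and $m=\pi_P(x)\in M$, then the Newton point of $m$ in $M$ maps to $\nu$ under $\aleph_M\to\aleph$. This should follow from the definition of the Newton strata in \cite[\S2.2]{hecke-1}: the Newton stratum of an element is controlled by the $\sigma$-conjugacy class / the image in $B(G)$ it determines, and for $x=mn\in P$ the element $x$ and $m=\pi_P(x)$ are $P$-conjugate (by elements of $N$), hence have the same image in $B(M)$, which in turn maps to the common image in $B(G)$; on the $V$-factor this is precisely the map ``take the $G$-dominant representative of the $W_G$-orbit'' described just before the Proposition. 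Conjugation by $\CK^{\spe}\subset G$ does not change the $G$-Newton point, which is why all the surviving terms land in strata $M(\nu')$ with $\bar\nu'=\nu$.

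Having established the pointwise support statement, I would conclude as follows. The function $\widetilde f^{(P)}$ lies in $H(M)$ by Lemma \ref{l:well}, and by the above its support meets only the strata $M(\nu')$ with $\bar\nu'=\nu$; decomposing $\widetilde f^{(P)}=\sum_{\nu'} (\widetilde f^{(P)})_{\nu'}$ according to the Newton decomposition $H(M)=\oplus_{\nu'\in\aleph_M} H(M;\nu')$ of \cite{hecke-1}, only finitely many terms with $\bar\nu'=\nu$ are nonzero. Passing to $\bar H(M)$ and dividing by the scalar $\mu_P(\CK^{\spe}\cap P)$, which is invertible in $\BQ$, gives $\bar r_M(\bar f)=r_P(f)\in\oplus_{\bar\nu'=\nu}\bar H(M;\nu')$ as an element of $\bar H(M)\otimes\BQ$; but $\bar r_M(\bar f)\in\bar H(M)$ by the Lemma preceding Theorem \ref{vd}, and the Newton decomposition of $\bar H(M)$ is a direct sum compatible with that of $\bar H(M)\otimes\BQ$, so in fact $\bar r_M(\bar f)\in\oplus_{\bar\nu'=\nu}\bar H(M;\nu')$ already over $\BZ[\frac1p]$. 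Since $\bar H(\nu)$ is spanned by such $\bar f$, the Proposition follows.

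\textbf{Main obstacle.} The delicate point is the geometric claim in the second paragraph: that the $G$-Newton stratum of $x\in P$ determines, and is determined in the expected way by, the $M$-Newton stratum of $\pi_P(x)$, uniformly over all conjugates ${}^{g^{-1}}X$ by $g\in\CK^{\spe}$. This requires unwinding the (technical) definition of $G(\nu)$ from \cite[\S2.2]{hecke-1} and checking that the construction there — which ultimately passes through semisimple parts, $\sigma$-conjugacy classes and the Kottwitz/Newton invariants — is compatible with the maps $B(M)\to B(G)$ and $\aleph_M\to\aleph$. I expect this to be where the real work lies; the rest is bookkeeping with \eqref{e:rP-explicit} and the already-established direct-sum Newton decompositions.
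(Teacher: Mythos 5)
The bookkeeping in your first and third paragraphs is fine and matches the paper, which in fact gets by with less: since the support statement is proved at the level of functions, one simply takes $f=\delta_X$ with $X\subset G(\nu)$ and observes that the support of $\widetilde f^{(P)}$ is contained in $\bigcup_{k\in\CK^{\spe}}\pi_P(kXk^{-1}\cap P)$, so no reduction via Proposition \ref{r-w} is needed (and that reduction is in any case misplaced: the Levi $M$ in the statement is arbitrary and unrelated to $\nu$, so one cannot assume $M=M_v$; fortunately the rest of your argument never uses this). The real issue is the geometric claim you isolate in your second paragraph, which is the entire content of the Proposition, and the justification you offer for it does not work. You assert that $x=mn\in P$ and $m=\pi_P(x)$ are $P$-conjugate by elements of $N$; this is false in general (take $m=1$ and $n\neq 1$ unipotent, which is not conjugate to the identity), and the subsequent phrase ``have the same image in $B(M)$'' is not even meaningful for $x\notin M$. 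So the step from ``$x\in G(\nu)\cap P$'' to ``the $M$-Newton point of $\pi_P(x)$ maps to $\nu$'' is exactly the missing argument, as you yourself flag under ``Main obstacle''; it is not a formal unwinding of the definition of $G(\nu)$.

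The paper proves precisely this statement as a separate Lemma by a short limiting argument rather than by conjugating $mn$ to $m$: write $p=mu$ with $m\in M(\nu')$, and choose $\lambda\in X_*(Z)$ with $\langle\lambda,\alpha\rangle=0$ for all relative roots $\alpha$ of $M$ and $\langle\lambda,\alpha\rangle>0$ for the roots in $N$. By \cite[Proposition 4.2]{hecke-2} one has $m\in G(\bar\nu')$, and by the openness of Newton strata \cite[Theorem 3.2]{hecke-1} there is $n$ with $m\CI_n\subset G(\bar\nu')$; for $l\gg 0$ one has $t^{l\lambda}ut^{-l\lambda}\in\CI_n\cap N$, hence $t^{l\lambda}pt^{-l\lambda}=m\,t^{l\lambda}ut^{-l\lambda}\in m\CI_n\subset G(\bar\nu')$. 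Since the strata are invariant under conjugation and the Newton decomposition $G=\bigsqcup_\nu G(\nu)$ is disjoint, this forces $\nu=\bar\nu'$. Your proposal needs this (or an equivalent) argument to be complete; as written, the key step would fail.
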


\begin{proof}
Let $f=\d_X$, where $X \subset G(\nu)$. By definition, the support of $\widetilde f^{(P)}$ equals $\cup_{k \in \CK^{\spe}} \pi_P (k X k \i \cap P)$. We have $k \i X k \cap P \subset G(\nu) \cap P$. Now the statement follows from the following Lemma. 
\end{proof}

\begin{lemma}
Let $P=M N$ be a standard parabolic subgroup and $\nu \in \aleph$. If $p \in P \cap G(\nu)$, then $\pi_P(p) \in M(\nu')$ for some $\nu' \in \aleph_M$ with $\bar \nu'=\nu$. 
\end{lemma}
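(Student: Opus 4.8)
The plan is to unwind the definitions of the Newton strata $G(\nu)$ and $M(\nu')$ from \cite[\S 2.2]{hecke-1} and track how the ``Newton point'' and ``Kottwitz point'' of an element $p \in P \cap G(\nu)$ compare with those of its image $\pi_P(p)$ in $M$. Recall that the Newton stratification is built by reducing an element $g \in G$ to a $\s$-conjugacy class datum: one passes to the Iwahori-Weyl group, takes the image in $B(G)$ (or its combinatorial shadow $\aleph$), recording the Kottwitz class in $\Omega = \tW/W_a$ and the dominant Newton vector in $V_+$. So first I would recall precisely which invariant of $g$ the stratum $G(\nu)$ records, and the corresponding statement for $M$.

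The key point is that parabolic descent is compatible with these invariants. For the Kottwitz factor this is essentially functoriality of the Kottwitz homomorphism $\kappa_G$ under $M \hookrightarrow G$: the composite $\pi_1(M) \to \pi_1(G)$ matches the natural map $\Omega_M \to \Omega$. For the Newton factor, an element $p = mn$ with $m = \pi_P(p)$ has the property that $p$ and $m$ are ``equivalent'' up to the unipotent direction $N$, and the Newton vector of $p$ computed in $G$ is the $G$-dominant representative of the Newton vector of $m$ computed in $M$ (this is exactly the content of the map $\aleph_M \to \aleph$, $\nu' \mapsto \bar\nu'$, recalled just before the Lemma, on the $V$-factor). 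Concretely, I would either cite the relevant compatibility in \cite{hecke-1} directly (the definition of $G(\nu)$ there is phrased in a way that restricts along $P$), or argue: any $p \in P$ lies in some $M$-Newton stratum $M(\nu')$ by the Newton decomposition for $M$, and then one checks $\pi_P(p) \in M(\nu') \Rightarrow p \in G(\bar\nu')$, which forces $\nu = \bar\nu'$ by uniqueness of the $G$-stratum containing $p$.

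So the skeleton is: (1) recall from \cite[\S 2.2]{hecke-1} the definition of the Newton strata, noting that membership $g \in G(\nu)$ is detected via the image of a suitable power/representative of $g$ in $\tW$, together with the Kottwitz and Newton invariants; (2) for $p \in P$, use the Newton decomposition $M = \sqcup_{\nu' \in \aleph_M} M(\nu')$ to find the unique $\nu'$ with $\pi_P(p) \in M(\nu')$ — note $\pi_P(p)$ differs from $p$ only by an element of $N$, which does not affect the relevant invariants since $N$ is the unipotent radical; (3) verify that the stratum datum of $p$ in $G$ is precisely $\bar\nu'$, using functoriality of Kottwitz and the definition of $\aleph_M \to \aleph$ on the Newton factor; (4) conclude $\nu = \bar\nu'$ by the disjointness of the $G(\nu)$.

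The main obstacle is step (3): the technical definition of $G(\nu)$ in \cite{hecke-1} involves choices (a representative in the Iwahori-Weyl group, passing to a large enough power, straightening to dominant form), and one must check these are made compatibly for $G$ and for $M$ so that the descent map $\aleph_M \to \aleph$ genuinely computes the $G$-invariant of $p$ from the $M$-invariant of $\pi_P(p)$. In practice this should already be implicit in the construction in \cite[\S 1.5]{hecke-2}, where the map $\aleph_M \to \aleph$ is introduced, so the proof may reduce to quoting that compatibility; but I would want to state it carefully, since everything downstream (Proposition \ref{r-N}, and hence the component-wise analysis of $\bar r_M$) rests on it.
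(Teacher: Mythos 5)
Your skeleton agrees with the paper's: locate $m=\pi_P(p)$ in an $M$-stratum $M(\nu')$, show that $p$ itself lies in $G(\bar\nu')$, and conclude $\nu=\bar\nu'$ from the disjointness of the Newton decomposition. But the middle step — which you correctly flag as the main obstacle — is exactly the content of the lemma, and your proposal does not supply it. The assertion that passing from $p=mu$ to $m$ ``does not affect the relevant invariants since $N$ is the unipotent radical'' is precisely what has to be proved; it is not a formal consequence of functoriality of the Kottwitz map or of the definition of $\aleph_M\to\aleph$. The compatibility you hope to quote from \cite{hecke-2} (the relevant statement is Proposition 4.2 there) only says $M(\nu')\subseteq G(\bar\nu')$, i.e.\ it handles elements of $M$ itself; it says nothing about elements $mu$ with $u\neq 1$, so the proof does not ``reduce to quoting that compatibility.''

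The paper closes this gap with a contraction argument that your outline is missing. Choose $\l\in X_*(Z)$ with $\<\l,\a\>=0$ for all relative roots $\a$ of $M$ and $\<\l,\a\>>0$ for all roots in $N$. By \cite[Proposition 4.2]{hecke-2}, $m\in G(\bar\nu')$, and by the openness of Newton strata \cite[Theorem 3.2]{hecke-1} there is $n$ with $m\CI_n\subseteq G(\bar\nu')$. For $l$ large, $t^{l\l}ut^{-l\l}\in\CI_n\cap N$, so $t^{l\l}pt^{-l\l}=m\,t^{l\l}ut^{-l\l}\in m\CI_n\subseteq G(\bar\nu')$; since Newton strata are conjugation-invariant, this conjugate also lies in $G(\nu)$, and disjointness \cite[Theorem 2.1]{hecke-1} forces $\nu=\bar\nu'$. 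Some argument of this kind (or an explicit comparison of the Iwahori double cosets met by conjugates of $p$ and of $m$) is indispensable; without it your step (3) is an unproven assertion rather than a proof.
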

\begin{remark}
The original proof was more complicated. The following simplification was suggested by S. Nie. 
\end{remark}

\begin{proof}
Let $p=m u$ with $m \in M$ and $u \in N$. We assume that $m \in M(\nu')$ for some $\nu' \in \aleph_M$. 

Let $\l \in X_*(Z)$ such that $\<\l, \a\>=0$ for any relative root $\a$ in $M$ and $\<\l, \a\>>0$ for any relative root in $N$. By \cite[Proposition 4.2]{hecke-2}, $m \in G(\bar \nu')$. By \cite[Theorem 3.2]{hecke-1}, there exists $n \in \BN$ such that $m \CI_n \subset G(\bar \nu')$. By our assumption on $\l$, there exists $l \in \BN$ such that $t^{l \l} u t^{-l \l} \in \CI_n \cap N$. Hence $$t^{l \l} p t^{-l \l}=m t^{l \l} u t^{-l \l} \in m \CI_n \subset G(\bar \nu').$$ However, $t^{l \l} p t^{-l \l}$ is conjugate to $p$ and thus is contained in $G(\nu)$. By \cite[Theorem 2.1]{hecke-1}, we must have $\nu=\bar \nu'$. 
\end{proof}

\section{The main result}

We first prove the following weak form of the Mackey formula for the cocenter. 

\begin{proposition}\label{mackey}
Let $M$ be a standard Levi subgroup and $v \in V_+$ with $M=M_v$. Then for any $f \in \bar H(M; v)$, we have $$\bar r_{M} \circ \bar i_v(f) \in f+\sum_{w \in {}^{M} W^M; w \neq 1} \bar H(M; w(v)),$$ where ${}^M W^M$ is the subset of $W_G$ consisting of elements of minimal length in their $W_M \times W_M$-cosets.
\end{proposition}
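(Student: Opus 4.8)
The plan is to transfer the classical Mackey formula for complex representations down to the cocenter, in the same spirit as the proof of Proposition~\ref{r-w}. I would first work over $\BC$. By the Spectral Density Theorem~\ref{SDT}, $\ker\Tr_\BC$ is trivial in $\bar H_\BC$, so $\bar H_\BC=\bar H^{\red}_\BC$ and any element of $\bar H_\BC(M)$ is pinned down by its traces against $\fkR_\BC(M)$. By Theorem~\ref{cat-r} the base change of $\bar r_M$ to $\BC$ is $\bar r^{\cat}_{M,\BC}$, adjoint to $i_{M,\BC}$ via (\ref{e:dual}); and by \cite[Theorem~B]{hecke-2} the base change of $\bar i_v$, restricted to $\bar H_\BC(M;v)$, is adjoint to $r_{M,\BC}$ in the sense of (\ref{e:dual}). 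Hence, for $f\in\bar H(M;v)$, the element $\bar r_M\bar i_v(f)\in\bar H_\BC(M)$ is the unique one satisfying $\Tr^M_\BC(\bar r_M\bar i_v(f),\sigma)=\Tr^M_\BC\bigl(f,(r_{M,\BC}\circ i_{M,\BC})(\sigma)\bigr)$ for all $\sigma\in\fkR_\BC(M)$.

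Next I would feed in the classical Mackey formula (geometric lemma), see e.g.\ \cite{BDK}: there is a direct sum decomposition $r_{M,\BC}\circ i_{M,\BC}\cong\bigoplus_{w\in{}^MW^M}\mathcal F_w$ of endofunctors of $\fkR_\BC(M)$, in which $\mathcal F_1=\id$, while for $w\neq1$ one has $\mathcal F_w=i^M_{A_w}\circ\Ad(\dot w)\circ r^M_{B_w}$ with $A_w={}^{\dot w}M\cap M$ and $B_w={}^{\dot w^{-1}}M\cap M$ standard Levi subgroups of $M$ satisfying ${}^{\dot w}B_w=A_w$. Substituting this into the trace identity and peeling off the adjunctions one functor at a time (using again injectivity of the trace on $\bar H_\BC(M)$) yields, in $\bar H_\BC(M)$, the formula $\bar r_M\bar i_v(f)=f+\sum_{w\in{}^MW^M,\,w\neq1}\bar i^M_{B_w}\bigl({}^{\dot w^{-1}}\bar r^M_{A_w}(f)\bigr)$.

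The substantive step — and the one I expect to be the main obstacle — is to locate the Newton components of the terms $g_w=\bar i^M_{B_w}\bigl({}^{\dot w^{-1}}\bar r^M_{A_w}(f)\bigr)$. Here the key is that $M=M_v$ forces $W_M=\{u\in W_G:u(v)=v\}$, hence $W_M\cdot v=\{v\}$. Then: (i) applying Proposition~\ref{r-N} to the pair $(M,A_w)$, and using $W_M\cdot v=\{v\}$ to collapse the $V$-factor, gives $\bar r^M_{A_w}(f)\in\bar H(A_w;v)$ ($v$ being $A_w$-dominant since it is $G$-dominant); (ii) conjugation-stability of the strata $G(\nu)$ makes $\Ad(\dot w^{-1})$ transport $\bar H(A_w;v)$ into $\bar H(B_w;w^{-1}(v))$, the $V$-factor of the Newton point transforming by $w^{-1}$, and $w^{-1}(v)$ being $M$-dominant because $w^{-1}\in{}^MW^M$ sends every simple root of $M$ to a positive root of $G$, so that $\langle v,w^{-1}(\alpha^\vee)\rangle\ge0$ and a fortiori $w^{-1}(v)$ is $B_w$-dominant; (iii) the Newton decomposition is compatible with parabolic induction on cocenters \cite{hecke-2}, so $\bar i^M_{B_w}$ carries $\bar H(B_w;w^{-1}(v))$ into the $V$-component of $\bar H(M)$ indexed by the $M$-dominant representative of $W_M\cdot w^{-1}(v)$, namely $w^{-1}(v)$ itself. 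Thus $g_w\in\bar H(M;w^{-1}(v))_\BC$; since $w\mapsto w^{-1}$ permutes ${}^MW^M\setminus\{1\}$, reindexing rewrites the sum as $\sum_{w\neq1}\bar H(M;w(v))_\BC$. Keeping the $\aleph$-labels of $A_w$, $B_w$, $M$ straight under $\Ad(\dot w^{\pm1})$ and under induction is the delicate bookkeeping; the rest is formal.

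It then remains to descend from $\BC$ to $\BZ[\tfrac1p]$. Both $f$ and $\bar r_M\bar i_v(f)$ already lie in the integral cocenter $\bar H(M)$, since $\bar i_v$ and $\bar r_M$ preserve the $\BZ[\tfrac1p]$-structure (by \cite{hecke-2} and Theorem~\ref{vd}), hence so does their difference $d$. By Lemma~\ref{l:commute} the map $\bar H(M)\to\bar H_\BC(M)$ is injective, and since $\BC$ is flat over $\BZ[\tfrac1p]$ the integral Newton decomposition $\bar H(M)=\bigoplus_{v'\in V_+^M}\bar H(M;v')$ base-changes to the decomposition over $\BC$, so $\bar H(M;v')_\BC\cap\bar H(M)=\bar H(M;v')$ for each $v'$. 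As the image of $d$ in $\bar H_\BC(M)$ lies in $\bigoplus_{w\neq1}\bar H(M;w(v))_\BC$, this forces $d\in\sum_{w\in{}^MW^M,\,w\neq1}\bar H(M;w(v))$, which is the assertion. (Alternatively, one could attempt a direct proof from the explicit descriptions of $\bar i_v$ and $\bar r_M$ via the Bruhat-type decomposition $(\CK^{\spe}\cap P)\backslash\CK^{\spe}/(\CK^{\spe}\cap P)\cong{}^MW^M$ coming from the finite reductive quotient, but the transfer argument above is shorter and cleaner.)
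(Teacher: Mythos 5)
Your route is genuinely different from the paper's and, with one repair, it works. The paper proves the proposition by a purely integral, explicit computation: writing $f=\d_{m\CK_M}$, using the explicit formula for $\bar i_v$ and the formula $\widetilde f^{(P)}$ for $\bar r_M$, decomposing $\CK^{\spe}$ along $\CK^{\spe}\cap P\dot wP$ for $w\in{}^MW^M$, showing each $w\neq 1$ piece contributes to $\bar H(M;w(v))$, and computing the $w=1$ piece to be exactly $\mu_P(\CK^{\spe}\cap P)f$; no characteristic-zero harmonic analysis enters. You instead run the adjunction/Mackey argument over $\BC$, where the Spectral Density Theorem makes the trace injective, and then descend to $\BZ[\frac 1p]$ via Lemma \ref{l:commute} and flat base change of the Newton decomposition --- the same device the paper uses in Proposition \ref{r-w}. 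This is exactly the route that Remark \ref{remark-mackey} says yields only the reduced-cocenter identity; your point that the descent from $\BC$ upgrades it to the integral cocenter is correct, at the price of relying on Theorem \ref{SDT} (Henniart--Lemaire in general), which the paper's proof of this particular proposition avoids. Your steps (i) and (ii) are sound, though (ii) (that conjugation by $\dot w\in\CK^{\spe}$ carries the Newton strata of $A_w$ to those of $B_w$, with the $V$-factor transported by $w\i$) deserves an explicit sentence rather than being taken for granted.

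The one step whose justification, as written, is not available is (iii). The compatibility of induction with the Newton decomposition established in \cite{hecke-2} (and recalled in Section 3 of the paper) concerns only the maps $\bar i_{v'}$ defined on $\bar H(M_{v'};v')$, i.e. induction from the \emph{full centralizer} Levi of $v'$ on its $v'$-component; the paper explicitly remarks that nothing of this kind is known for $\bar i_M$ from an arbitrary Levi, so a blanket citation of ``compatibility of the Newton decomposition with parabolic induction on cocenters'' overclaims. The gap is repairable, and the repair uses the hypothesis $M=M_v$ once more: since the roots of $M$ are exactly the roots orthogonal to $v$, a root $\a$ of $M$ satisfies $\<w\i(v),\a\>=\<v,w\a\>=0$ if and only if $w\a$ is again a root of $M$, so $B_w=M\cap{}^{\dot w\i}M$ is precisely the centralizer of $w\i(v)$ in $M$. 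Hence the instance of (iii) you need is the centralizer case: over $\BC$, the categorical induction $\bar i^{\cat}_{B_w,\BC}$ restricted to $\bar H_\BC(B_w;w\i(v))$ coincides with the explicit map attached to $w\i(v)$ inside $M$ (both are adjoint to the Jacquet functor, and the trace is injective over $\BC$), and the image of that explicit map lies in $\bar H_\BC(M;w\i(v))$ by \cite[Theorem A]{hecke-2}. With that observation inserted, your argument goes through and gives the stated containment in the integral cocenter.
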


\begin{remark}\label{remark-mackey}
We expect that $$\bar r_{M} \circ \bar i_v(f)=\sum_{w \in {}^{M} W^M} \bar i^{M}_{w(v)} \circ w \circ \bar r^M_{M \cap {}^{w \i} M} (f).$$

In the reduced cocenter $\bar H^{\red}_R$, this can be obtained via the adjunction formula and the Mackey formula in $\fkR_R(G)$ \cite[\S I.5.5]{Vig}. The equality in the reduced cocenter is enough for the application to the trace Paley-Wiener Theorem, but for the application to the abstract Selberg principle, we need an equality in the cocenter. 
\end{remark}

\begin{proof}
It suffices to consider the case where $f=\d_{m \CK_M}$, where $m \in M$ and $\CK_M$ is an open compact subgroup of $M$ such that $m \CK_M \in M(v)$. By definition, $\mu_P(\CK^{\spe} \cap P) \bar r_M \circ \bar i_v(f)$ is represented by $$\frac{\mu_M(\CK_M)}{\mu_G(\CK_M \CK')} \int_{\CK^{\spe}} \d^{(P)}_{k X k \i \cap P} dk,$$ where $X=m \CK_M \CK'$ for some sufficiently small open compact subgroup $\CK'=\CK'_N \CK'_M \CK'_{N^-}$ of $G$. 

By \cite[Proposition 2.3]{hecke-2}, any element in $m \CK_M \CK'$ is conjugate by an element in $\CK^{\spe}$ to an element in $m \CK_M$. 

Let $w \in {}^M W^M$. We first show that \[\tag{a} \pi_P(k m \CK_M k \i \cap P) \subseteq M(w(v)) \quad \text{ for any } k \in P \dot w P.\] Note that $\pi_P(p X' p \i \cap P)=\pi_P(p) \pi_P(X' \cap P) \pi_P(p) \i$ for any $p \in P$ and $X' \subset G$. It suffices to prove (a) for $k \in \dot w P$. 

Let $k=\dot w p$ for $p \in P$. Note that $k m \CK_M k \i \cap P \subseteq \dot w P \dot w \i \cap P$. By \cite[Theorem 2.8.7]{Ca}, we have $$\dot w P \dot w \i \cap P \cong M'  \times (\dot w N \dot w \i \cap M) \times (\dot w M \dot w \i \cap N) \times (\dot w N \dot w \i \cap N),$$ where $M'=\dot w M \dot w \i \cap M$ is a standard Levi subgroup of $G$. 
Then $$\pi_P(k m \CK_M k \i \cap P) \subseteq \pi_P(\dot w \pi_P(p m \CK_M p \i) \dot w \i \cap P) (\dot w N \dot w \i \cap M).$$ 

Note that $\pi_P(\dot w \pi_P(p m \CK_M p \i) \dot w \i \cap P) \subseteq M'$. We have \begin{align*} \pi_P(\dot w \pi_P(p m \CK_M p \i) \dot w \i \cap P) &=\pi_{M'}(\dot w \pi_P(p m \CK_M p \i) \dot w \i \cap P) \subseteq M'(w(v)) \\ & \subseteq M(w(v)).\end{align*} Here the first inequality follows from Proposition \ref{r-N} and the second inequality follows from the fact that $w(v)$ is $M$-dominant. 

Hence we have $\pi_P(k m \CK_M k \i \cap P) \subseteq M'(w(v)) (\dot w N \dot w \i \cap M)$. As $\<w(v), \a\>>0$ for any root $\a$ in $\dot w N \dot w \i \cap M$, by the proof of \cite[Proposition 2.3 (1)]{hecke-2}, any element in $M'(w(v)) (\dot w N \dot w \i \cap M)$ is conjugate in $M$ to an element in $M'(w(v))$. Therefore $\pi_P(k m \CK_M k \i \cap P) \subseteq M(w(v))$ and (a) is proved. 

Now we compute the Newton component $f'$ of $\mu_P(\CK^{\spe} \cap P) \bar r_{M} \circ \bar i_v(f)$ with Newton point $v$. Note that $\CK^{\spe}=\sqcup_{w \in {}^M W^M} \CK^{\spe} \cap P \dot w P$. By (a), $$f'=\frac{\mu_M(\CK_M)}{\mu_G(\CK_M \CK')} \int_Y \d^{(P)}_{k X k \i \cap P} dk,$$ where $Y$ is the subset of $\CK^{\spe}$ consisting of elements of the form $p k'$ with $p \in \CK^{\spe} \cap P$ and $k' \in \CK^{\spe}$ such that $k' X (k') \i \cap m \CK_M \neq \emptyset$. As $\CK'_{N}$ is sufficiently small, for any $u \in \CK^{\spe} \cap N^-$, we have that $u X u \i \subseteq (m \CK_M \CK'_N) N^-$ and if $u X u \i \cap m \CK_M \CK'_N \neq \emptyset$, then $u X u \i \cap m \CK_M \CK'_N=m \CK_M \CK'_N$. By the proof of \cite[Proposition 2.3 (1)]{hecke-2}, $Y=(\CK^{\spe} \cap P) \times Y'$, where $Y'=\{u \in \CK^{\spe} \cap N^-; m \CK_M \CK'_N \subseteq u X u \i\}$ and $\mu_{N^-} (Y')=\mu_{N^-}(\CK'_{N^-})$. 

Note that for any $p=m' u \in P$ with $m_1 \in M$ and $u \in N$, we have that $\pi_P(\d_{p m \CK_M \CK'_N})=\mu_{N}(\CK'_N) \d_{m' m \CK_M (m') \i}$. Thus we have in $\bar H(M)$, \begin{align*} f' &=\frac{\mu_M(\CK_M)}{\mu_G(\CK_M \CK')} \int_Y \d^{(P)}_{k X k \i \cap P} dk \\ &=\frac{\mu_M(\CK_M)}{\mu_G(\CK_M \CK')} \mu_{N^-}(\CK'_{N^-}) \int_{\CK^{\spe} \cap P} \d^{(P)}_{k m \CK_M \CK'_N k \i} dk \\ &=\frac{\mu_M(\CK_M)}{\mu_G(\CK_M \CK')} \mu_{N^-}(\CK'_{N^-}) \mu_N(\CK^{\spe} \cap N) \mu_{N}(\CK'_N) \int_{\CK^{\spe} \cap M} \d_{m' m \CK_M (m') \i} d m' \\ &=\frac{\mu_M(\CK_M)}{\mu_G(\CK_M \CK')} \mu_{N^-}(\CK'_{N^-}) \mu_N(\CK^{\spe} \cap N) \mu_{N}(\CK'_N) \mu_M({\CK^{\spe} \cap M}) f \\ &=\mu_P(\CK^{\spe} \cap P) f \in \bar H(M).
\end{align*}
Here the fourth equality follows from that $\d_{m' m \CK_M (m') \i}$ and $f=\d_{m \CK_M}$ have the same image in $\bar H(M)$ and the last equality follows from the fact that $\mu_G(\CK_M \CK')=\mu_M(\CK_M) \mu_N (\CK'_N) \mu_{N^-}(\CK'_{N^-})$ and $\mu_P(\CK^{\spe} \cap P)=\mu_M(\CK^{\spe} \cap M) \mu_N(\CK^{\spe} \cap N)$.  
\end{proof}

Combining the weak form the Mackey formula with the Newton decomposition, we have the following result. 

\begin{theorem}\label{main'}
Let $v \in V_+$ and $M=M_v$. Then the restriction maps $\bar r_M: \bar H(G; v) \to \bar H(M)$ and $\bar r_M: \bar H_R^{\red}(G; v) \to \bar H_R^{\red}(M)$ are injective. 
\end{theorem}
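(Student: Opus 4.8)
The plan is to deduce the injectivity of $\bar r_M$ on the stratum $\bar H(G;v)=\bar H(v)$ from three facts that are already in place: the surjectivity of the induction map $\bar i_v\colon\bar H(M;v)\to\bar H(v)$ from \cite[Theorem~A]{hecke-2}, the weak Mackey formula of Proposition~\ref{mackey}, and the fact that the Newton decomposition is a \emph{direct} sum. The one genuinely new input is a combinatorial remark about $w\in{}^M W^M$: first, $w(v)$ is again $M$-dominant, since $v$ is $G$-dominant and fixed by $W_M=W_{M_v}$ while $w^{-1}$ carries the simple roots of $M$ into the positive roots of $G$, so $\langle w(v),\alpha\rangle=\langle v,w^{-1}(\alpha)\rangle\ge 0$ for every simple root $\alpha$ of $M$; second, if $w(v)=v$ then $w\in\mathrm{Stab}_{W_G}(v)=W_M$, and since $1$ is the unique element of minimal length in the double coset $W_M=W_M\cdot 1\cdot W_M$ this forces $w=1$. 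Hence $w(v)\neq v$ for every $w\in{}^M W^M$ with $w\neq 1$.

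With this in hand, the integral case runs as follows. Let $f\in\bar H(G;v)$ with $\bar r_M(f)=0$. Using surjectivity of $\bar i_v$, write $f=\bar i_v(g)$ with $g\in\bar H(M;v)$. Proposition~\ref{mackey} then gives
\[
0=\bar r_M(f)=\bar r_M\circ\bar i_v(g)\in g+\sum_{w\in{}^M W^M,\ w\neq 1}\bar H(M;w(v)),
\]
so $g\in\bar H(M;v)\cap\sum_{w\neq 1}\bar H(M;w(v))$. By the combinatorial remark each summand $\bar H(M;w(v))$ with $w\neq 1$ lies in $\bigoplus_{v'\in V^M_+,\ v'\neq v}\bar H(M;v')$, and since $\bar H(M)=\bigoplus_{v'\in V^M_+}\bar H(M;v')$ is a direct sum (the Newton decomposition for $M$), this intersection is $0$. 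Therefore $g=0$, hence $f=\bar i_v(g)=0$, and $\bar r_M\colon\bar H(G;v)\to\bar H(M)$ is injective.

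The reduced statement over $R$ is proved word for word the same way, with each ingredient replaced by its reduced‑over‑$R$ version: the surjectivity $\bar i^{\cat}_{M,R}\colon\bar H^{\red}_R(M;v)\twoheadrightarrow\bar H^{\red}_R(v)$ recalled above from \cite{hecke-2}; the descent of Proposition~\ref{mackey} to $\bar H^{\red}_R(M)$, legitimate because $\ker\Tr_R$ is stable under $\bar i_M$ and $\bar r_M$ and the reduced extension of $\bar i_v$ is $\bar i^{\cat}_{M,R}$; and the directness of $\bar H^{\red}_R(M)=\bigoplus_{v'\in V^M_+}\bar H^{\red}_R(M;v')$, which is Theorem~\ref{t:newton} applied to $M$. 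I expect the only real subtlety (and the reason Theorem~\ref{t:newton} is needed at all) to lie exactly here: over a field of small characteristic the Newton decomposition of the reduced cocenter is known to be direct only after the strata are grouped by their $V$-factor, so it is essential that the error terms produced by Proposition~\ref{mackey} sit in strata whose $V$-factor $w(v)$ is genuinely different from $v$ — which is precisely what the combinatorial remark secures. Everything else is formal bookkeeping; in particular no harmonic analysis (nor the spectral density theorem) enters this final step.
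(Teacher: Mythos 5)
Your proof is correct and follows essentially the same route as the paper: write $f=\bar i_v(f')$ using the surjectivity of $\bar i_v$ from \cite[Theorem A]{hecke-2}, apply the weak Mackey formula of Proposition \ref{mackey}, and use that $w(v)=v$ forces $w=1$ for $w\in{}^M W^M$ (since $W_M=\mathrm{Stab}_{W_G}(v)$) together with the directness of the Newton decomposition of $\bar H(M)$, respectively of $\bar H^{\red}_R(M)$ grouped by the $V$-factor (Theorem \ref{t:newton} for $M$, i.e.\ \cite[Theorem 6.3]{hecke-2}), to conclude $f'=0$ and hence $f=0$. The extra details you supply --- the $M$-dominance of $w(v)$, the stabilizer argument, and the descent of Proposition \ref{mackey} to the reduced cocenter via the stability of $\ker\Tr_R$ under induction and restriction --- are exactly the points the paper's proof leaves implicit.
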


\begin{proof}
Let $f \in \bar H(v)$. By \cite[Theorem A]{hecke-2}, we have $f=\bar i_M(f')$ for some $f' \in \bar H(M; v)$. As $W_M$ is the centralizer of $\nu$ in $W$, for $w \in {}^M W^M$, we have $w (v)=v$ if and only if $w=1$. 

If $\bar r_M(f)=0 \in \bar H(M)$, then by the Newton decomposition on $\bar H(M)$ (see \cite[Theorem B]{hecke-1}) and Proposition \ref{mackey}, we have $f'=0 \in \bar H(M)$. 

If $\bar r_M(f) \in \ker \Tr^M_R$, then by the compatibility between the Newton decomposition and the trace map (see \cite[Theorem 6.3]{hecke-2}), we have $f' \in \ker \Tr^M_R$ and thus $f \in \ker \Tr^G_R$. 
\end{proof}

\subsection{} We define the elliptic part of the cocenter by 
\begin{gather*}
\bar H^{\el}=\{f \in \bar H; \bar r_M(f)=0 \text{ for any proper standard Levi } M\}, \\
\bar H^{\el, \red}_R=\{f \in \bar H^{\red}; \bar r^{\red}_{M, R}(f)=0 \text{ for any proper standard Levi } M\}.
\end{gather*}

Now we compare the elliptic part and rigid part of the cocenter. 

\begin{theorem}\label{main}
We have $$\bar H^{\el} \subset \bar H^\rig, \quad \bar H^{\el, \red}_R \subset \bar H^{\rig, \red}_R.$$
\end{theorem}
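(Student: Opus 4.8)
The plan is to prove the two inclusions by isolating, for each $f$ in the elliptic cocenter, its Newton components $\bar H(v)$ with non-central Newton factor and showing they all vanish. Recall that $\bar H^{\rig}=\bigoplus_{v\in V_+;\,M_v=G}\bar H(v)$, so what must be ruled out is a nonzero component $f_v$ of $f$ in the $v$-part with $M_v=M\subsetneq G$ a proper standard Levi. The key structural input is the compatibility of $\bar r_M$ with the Newton decomposition (Proposition \ref{r-N}): for any standard Levi $M$, $\bar r_M$ sends $\bar H(\nu)$ into $\bigoplus_{\nu'\in\aleph_M;\,\bar\nu'=\nu}\bar H(M;\nu')$. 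Combined with the fact (Theorem \ref{t:newton}, Theorem 1.x) that $\bar H=\bigoplus_{v\in V_+}\bar H(v)$ is an honest direct sum, this lets me work one $v$-component at a time.

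First I would fix $f\in\bar H^{\el}$ and write $f=\sum_{v\in V_+}f_v$ with $f_v\in\bar H(v)$, and argue by (downward, or dominance-order) induction that $f_v=0$ whenever $M_v\neq G$. Take such a $v$ and set $M=M_v$, a proper standard Levi. Since $\bar r_M(f)=0$ and $\bar r_M$ respects the Newton decomposition, one gets that for every Newton class $\nu'\in\aleph_M$ lying over the $G$-orbit of $v$, the corresponding component of $\bar r_M(f)$ vanishes; in particular, projecting to the $v$-Newton component of $\bar H(M)$ (noting $v$ itself is $M$-dominant since $M=M_v$), we learn that the $v$-component of $\bar r_M(f)$ is zero. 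The point now is to compare this with $\bar r_M(f_v)$: the other components $f_{v'}$ of $f$ contribute to the $v$-component of $\bar r_M(f)$ only if $v'$ lies over $v$ under $\aleph_M\to\aleph$, i.e.\ $v'\in W_G\cdot v$ with $v'$ $M$-dominant; since $M=M_v$ is the centralizer of $v$, the only $M$-dominant element in $W_G\cdot v$ is $v$ itself, so $v'=v$. Hence the $v$-Newton component of $\bar r_M(f)$ equals the $v$-component of $\bar r_M(f_v)$, which is therefore $0$.

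Next I would invoke Theorem \ref{main'}: since $M=M_v$, the restriction map $\bar r_M:\bar H(G;v)\to\bar H(M)$ is injective (for the reduced statement, use the injectivity of $\bar r^{\red}_{M,R}:\bar H^{\red}_R(G;v)\to\bar H^{\red}_R(M)$). By the previous paragraph $\bar r_M(f_v)$ has trivial $v$-component; but $f_v\in\bar H(G;v)$ and, again by Proposition \ref{r-N}, $\bar r_M(f_v)$ lies in $\bigoplus_{\nu'\text{ over }v}\bar H(M;\nu')$, and among these the only one with $M$-dominant Newton factor equal to $v$ contributes — so $\bar r_M(f_v)$ is actually concentrated in, or at least determined by, its $v$-component, forcing $\bar r_M(f_v)=0$. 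Injectivity of $\bar r_M$ on $\bar H(G;v)$ then gives $f_v=0$. Running this over all $v$ with $M_v\neq G$ shows $f\in\bigoplus_{v;M_v=G}\bar H(v)=\bar H^{\rig}$, and the identical argument with $\bar r^{\red}_{M,R}$, Theorem \ref{main'}, and the reduced Newton decomposition (Theorem \ref{t:newton}) gives $\bar H^{\el,\red}_R\subset\bar H^{\rig,\red}_R$.

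The main obstacle, and the step I would scrutinize most carefully, is the bookkeeping in the paragraph above: showing that the $v$-Newton component of $\bar r_M(f)$ sees \emph{only} $f_v$ and, within $\bar r_M(f_v)$, that the relevant obstruction really is captured by a single Newton component on which Theorem \ref{main'} applies. This rests entirely on the facts that $W_M=W_{M_v}$ is exactly the $W_G$-stabilizer of $v$ (so no other $W_G$-translate of $v$ is $M$-dominant) and that the fibers of $\aleph_M\to\aleph$ over the class of $v$ are described by $W_G$-translates; one must also make sure the Newton decompositions on $\bar H(M)$ (resp.\ $\bar H^{\red}_R(M)$) used here are the direct-sum statements, not merely spanning statements — Theorem 1.x gives the former for $\bar H(M)$, and the Remark after Theorem \ref{t:newton} warns that at the level of $\aleph$ in the reduced setting directness can fail, so for the reduced inclusion I would be careful to phrase everything in terms of the $V_+$-indexed decomposition $\bar H^{\red}_R(M)=\bigoplus_{v}\bar H^{\red}_R(M;v)$, which is a genuine direct sum. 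Once these compatibilities are pinned down the injectivity statement of Theorem \ref{main'} does the rest.
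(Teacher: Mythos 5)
Your overall strategy is the paper's own: decompose $f=\sum_{v\in V_+}f_v$, use Proposition \ref{r-N} together with the direct-sum Newton decomposition of $\bar H(M)$ to isolate the contribution of each $f_v$, and then kill $f_v$ for $M_v\neq G$ via Theorem \ref{main'}. However, there is a genuine gap at the step where you pass from ``the $v$-component of $\bar r_M(f_v)$ vanishes'' to ``$\bar r_M(f_v)=0$''. Your justification rests on the claim that, because $W_M=W_{M_v}$ is the stabilizer of $v$, the element $v$ is the \emph{only} $M$-dominant element of $W_G\cdot v$. That claim is false: the stabilizer being $W_M$ only says that $w(v)=v$ iff $w\in W_M$; the $M$-dominant elements of $W_G\cdot v$ are in bijection with the $W_M$-orbits on $W_G\cdot v$, i.e.\ with $W_M\backslash W_G/W_M$, and there are in general several of them. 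Concretely, for $G=\GL_3$, $v=(1,1,0)$ and $M=M_v=\GL_2\times\GL_1$, both $(1,1,0)$ and $(1,0,1)$ are $M$-dominant elements of $W_G\cdot v$. Consequently $\bar r_M(f_v)$ is \emph{not} concentrated in its $v$-component; indeed the Mackey-type formula (Proposition \ref{mackey}) shows it typically has nonzero components in $\bar H(M;w(v))$ for $w\in{}^MW^M$, $w\neq 1$. So, as written, the vanishing of the single $v$-component does not let you invoke the injectivity statement of Theorem \ref{main'}, which needs the full vanishing of $\bar r_M(f_v)$. (In your second paragraph the same false claim appears, but there the conclusion $v'=v$ is still correct for the simpler reason that $v'$ and $v$ are both $G$-dominant and lie in the same $W_G$-orbit.)

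The repair is small and is exactly what the paper does: do not restrict to the $v$-component alone. Since $\bar r_M(f)=0$ and distinct $v_1\neq v_2$ in $V_+$ have disjoint $W_G$-orbits, \emph{every} Newton component of $\bar r_M(f)$ indexed by an $M$-dominant element of $W_G\cdot v$ receives contributions only from $f_v$; by Proposition \ref{r-N} these components exhaust the support of $\bar r_M(f_v)$, so the direct-sum decomposition of $\bar H(M)$ gives $\bar r_M(f_v)=0$ outright, and Theorem \ref{main'} (applied with $M=M_v$) yields $f_v=0$. Alternatively, if you insist on using only the $v$-component, you must bring in Proposition \ref{mackey} together with the surjectivity of $\bar i_v$ (\cite[Theorem A]{hecke-2}): writing $f_v=\bar i_v(f')$ with $f'\in\bar H(M;v)$, the $v$-component of $\bar r_M(f_v)$ is precisely $f'$ modulo the components $\bar H(M;w(v))$, $w\neq 1$, all of which have $w(v)\neq v$; hence its vanishing forces $f'=0$ and so $f_v=0$ --- but this is essentially re-proving Theorem \ref{main'} rather than applying it. With either correction, your argument, including the reduced case phrased via the $V_+$-indexed decomposition, coincides with the paper's proof.
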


\begin{proof}
We prove the first inclusion. The second one is proved in the same way. 

Let $f \in \bar H$. We may write $f$ as $f=\sum_{v \in V_+} f_{v}$, where $f_{v} \in \bar H(v)$ is the corresponding Newton component of $f$. By Proposition \ref{r-N}, $$\bar r_{M}(f_v) \in \oplus_{v' \in W_G(v); v'  \text{ is $M$-dominant}} \bar H(M; v').$$ Note that $W_G(v_1) \cap W_G(v_2)=\emptyset$ for distinct $v_1, v_2 \in V_+$. By the Newton decomposition on $\bar H(M)$ (see \cite[Theorem B]{hecke-1}), we have that $\bar r_M(f_v)=0$ for all proper Levi subgroups $M$. By Theorem \ref{main'}, $f_{v}=0$ for all $v \in V_+$ with $M_v \neq G$. Therefore $f \in \bar H^{\rig}$. 
\end{proof}

\section{Application: the trace Paley-Wiener Theorem}

\subsection{}\label{good} Let $M$ be a standard Levi subgroup. Denote by $M^0$ the subgroup of $M$ generated by all the compact subgroups of $M$. Then $M^0$ is open in $M$ and $M/M^0$ is an abelian group of finite rank \cite[I.1.4]{Vig}. An {\it unramified character} of $M$ over $R$ is a group homomorphism $G\to R^\times$ which is trivial on $M^0$. We denote by $\Psi(M)_R$ the set of all the unramified characters of $M$ over $R$. This is a group under multiplication. In fact, as $M/M^0\cong \mathbb Z^n$ for $n\ge 0$, we have $\Psi(M)_R\cong (R^\times)^n$. The group $\Psi(M)_R$ acts by multiplication on $\mathfrak R_R(M)$.

\smallskip

We recall the definition of good forms introduced in \cite{BDK}.  Let $f \in \fkR_R(G)^*$. We say that $f$ is {\it good} if: 

\begin{itemize}
\item There exists an open compact subgroup $\CK$ such that $f(\pi)=0$ if $\pi$ has no nonzero $\CK$-fixed points. 

\item For every standard Levi subgroup $M$ and $\s \in \fkR_R(M)$, the function $$\Psi(M)_R \to R, \quad \psi \mapsto f(i_M(\s \otimes \psi))$$ is a regular function. 
\end{itemize}

It is easy to see that $\Tr_R(f)$ is a good form for any $f \in \bar H$. 

In the rest of this section, we show that if $\sharp W_G$ is invertible in $R$, then the converse is also true, i.e., any good form comes from the cocenter of $H$. 

\begin{theorem}\label{trace-p-w}
Assume that $\sharp W_G$ is invertible in $R$. Then the trace map $\Tr_R: \bar H_R \to \fkR_R(G)^*_{\good}$ is surjective. 
\end{theorem}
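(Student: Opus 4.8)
The plan is to follow the classical Bernstein--Deligne--Kazhdan strategy, adapted to our integral/modular setting. Let $f \in \fkR_R(G)^*_{\good}$. First I would fix a level: since $f$ is good, there is an open compact subgroup $\CK$ so that $f$ vanishes on all representations with no nonzero $\CK$-fixed vectors; we may take $\CK = \CI_n$ to be a congruence subgroup of the Iwahori, so $f$ factors through the finitely generated module $\fkR_R(G, \CI_n)$ of representations generated by their $\CI_n$-fixed vectors. The target is then to realize $f$ on this piece by $\Tr_R(\bar h)$ for some $\bar h \in \bar H_R(G, \CI_n)$. The induction is on the semisimple rank of $G$ (or on the poset of standard Levi subgroups): for every \emph{proper} standard Levi $M$, the restriction of $f$ (in the appropriate adjoint sense) is a good form on $\fkR_R(M)$, so by the inductive hypothesis --- applied to $M$, whose relative Weyl group order also divides $\sharp W_G$ and is hence invertible in $R$ --- it is realized by an element $\bar h_M \in \bar H(M)_R$. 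Using the induction maps $\bar i^{\cat}_{M,R}$, subtract off $\sum_M \bar i^{\cat}_{M,R}(\bar h_M)$, suitably symmetrized, to reduce to the case where $f$ is \emph{elliptic}, i.e. $f(i_M(\sigma)) = 0$ for all proper $M$ and all $\sigma$. This is exactly the step that uses the $A$-operator combinatorics of \cite{BDK} and where the hypothesis $\sharp W_G \in R^\times$ enters: one averages over $W_G$-orbits to extract the elliptic projector, and this requires dividing by $\sharp W_G$.

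The heart of the matter is then the \emph{elliptic} trace Paley--Wiener statement: an elliptic good form at level $\CI_n$ lies in the image of the trace map restricted to $\bar H_R(G, \CI_n)$. Here I would use the $A$-operator description of the elliptic part together with the finiteness input described in the introduction. Concretely, the elliptic good forms at level $\CI_n$ form a space dual to the image of $\bar A$ on $\fkR_R(G, \CI_n)$, and the image of $\bar A$ on the cocenter $\bar H_R(G, \CI_n)$ is the elliptic reduced cocenter $\bar H^{\el,\red}_R(G, \CI_n)$. By the main result Theorem \ref{main}, this elliptic cocenter is contained in the reduced rigid cocenter, hence in $\bigoplus_{v : M_v = G} \bar H^{\red}_R(G, \CI_n; v)$ by the Newton decomposition for depth $n - \e$; and each Newton component $\bar H(G, \CI_n; v)$ is finite-dimensional by \cite[Theorem 4.1]{hecke-1}. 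Thus both the source and target of the elliptic trace pairing at level $\CI_n$ are finite-dimensional, and one checks the pairing is perfect (non-degenerate on each side), which gives surjectivity. Non-degeneracy on the cocenter side is the spectral-density-type input; over $\BC$ it is Theorem \ref{SDT}, and the finite-rank reduction lets one bootstrap to general $R$ by a base-change/specialization argument in the spirit of Lemma \ref{l:commute}, using that $\sharp W_G$ is invertible.

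I expect the main obstacle to be precisely the elliptic finiteness and perfectness argument: establishing that the elliptic good forms at a fixed depth span a finite-dimensional space \emph{and} that they are exactly matched by the finite-dimensional elliptic cocenter at that depth, over an arbitrary field $R$ of characteristic $\ne p$ (where one has no spectral density theorem and the irreducible representations are not classified). The key leverage is Theorem \ref{main} converting "elliptic" into "rigid," after which \cite[Theorem 4.1]{hecke-1} supplies the finite-dimensionality; the remaining work is a careful dimension count and a non-degeneracy check for the trace pairing between $\bar H^{\el,\red}_R(G,\CI_n)$ and the elliptic good forms, plus the bookkeeping to glue the inductively-constructed $\bar h_M$ and the elliptic correction into a single element of $\bar H_R$ whose trace is $f$. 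The reduction step via the $A$-operator, though standard over $\BC$, also needs the invertibility of $\sharp W_G$ to be used cleanly, and checking that this is the only place it is needed is part of the argument.
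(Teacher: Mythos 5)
Your overall architecture --- fixing the level $\CI_n$ via goodness, inducting on proper standard Levi subgroups, reducing to the elliptic case with the $A$-operator (the only place $\sharp W_G\in R^\times$ is used), and then exploiting Theorem \ref{main}, the Newton decomposition at level $\CI_n$, and the finite dimensionality of the Newton components --- is the same as the paper's. The genuine gap is in your elliptic step. You propose to show that the trace pairing between $\bar H^{\el,\red}_R(G,\CI_n)$ and the elliptic good forms is \emph{perfect}, obtaining non-degeneracy on the cocenter side from the spectral density theorem over $\BC$ plus a base-change/specialization argument ``in the spirit of Lemma \ref{l:commute}''. This does not work: the spectral density theorem can fail for $R$ of positive characteristic (the paper says so explicitly after Theorem \ref{SDT}), and indeed $\ker\Tr_R\neq 0$ in small characteristic --- which is precisely why the reduced cocenter $\bar H^{\red}_R=\bar H_R/\ker\Tr_R$ is introduced. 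Lemma \ref{l:commute} only gives injectivity of $\bar H\to\bar H_\BC$ over $\BZ[\frac 1p]$; it cannot transfer non-degeneracy of the trace pairing to $\bar H_R$, since extension of scalars to characteristic $l$ creates a kernel. So the non-degeneracy you plan to ``check'' on the cocenter side is in general false, and the bootstrap has no proof.

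The good news is that surjectivity does not need it. The paper's elliptic statement (Theorem \ref{disc}) uses non-degeneracy only on the \emph{representation} side (linear independence of characters: if $\Tr_R(f',\pi)=0$ for all $f'\in\bar H_R(G,\CI_n)$, then $\pi=0$ in $\fkR(H_R(G,\CI_n))$), combined with a finite-dimensional separation argument: for $\pi$ in the image of $A_R$ one has $\Tr_R(f',\pi)=\frac 1a\Tr_R(\bar A^{\red}_R(f'),\pi)$, and by depth preservation, Theorem \ref{main} and Theorem \ref{In-dec}, $\bar A^{\red}_R(f')$ lies in the finite-dimensional space $\bar H_R(G,\CI_n)^{\rig,\red}$; hence the image of $\Tr_R$ restricted to $A_R(\fkR(H_R(G,\CI_n)))$ lies in the span of the rigid traces, and if a good form $f$ were not in that span one could choose $\pi$ in the image of $A_R$ killed by all rigid traces with $f(\pi)\neq 0$, forcing $\pi=0$ and a contradiction. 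Relatedly, your assertion that the space of elliptic good forms at level $\CI_n$ is finite dimensional is neither needed nor established in your sketch; what is needed (and suffices) is finite dimensionality of the rigid cocenter at level $\CI_n$. If you replace the perfect-pairing/SDT step by this separation argument, your plan coincides with the paper's proof, and your Levi-induction bookkeeping is exactly the BDK manipulation carried out there.
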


\subsection{} We first recall the $A$-operator introduced in \cite[\S 5.5]{BDK}. 

For any standard Levi subgroup $M$, let $d(M)=\dim Z(M)$, where $Z(M)$ is the center of $M$. Let $N_M=\{w \in W_M \backslash W_G/W_M; w W_M w \i=W_M\}$. For any $l \in \BN$, we define $A_{l, R}=\Pi_{M; d(M)=l} (i_{M, R} \circ r_{M, R}-\sharp N_M)$. We define $$A_R=A_{d(G), R} A_{d(G)-1, R} \cdots A_{1, R}: \fkR_R(G) \to \fkR_R(G).$$

We have the Mackey formula \cite[\S I.5.5]{Vig} $$r_{M, R} \circ i_{M', R}=\sum_{w \in {}^M W^{M'}} i^M_{M \cap {}^{\dot w} M', R} \circ \dot w \circ r^{M'}_{M' \cap {}^{\dot w \i} M, R}: \fkR_R(M') \to \fkR_R(M).$$ 

Combining Proposition \ref{r-w} with Theorem \ref{cat-r}, we have

\begin{lemma}\label{con-w}
Let $M, M'$ be standard Levi subgroups of $G$ and $w \in W$ with ${}^{\dot w} M=M'$. Then $$i_{M, R}=i_{M', R} \circ \dot w: \fkR_R(M) \to \fkR_R(G).$$ 
\end{lemma}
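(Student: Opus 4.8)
The plan is to deduce the equality of these two functors by comparing their traces against the cocenter, and then to invoke the linear independence of characters on the representation side.

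First I would combine Proposition~\ref{r-w} with Theorem~\ref{cat-r}. Theorem~\ref{cat-r} identifies the extension of $\bar r_M$ over $R$ with the categorical restriction $\bar r^{\cat}_{M,R}$ (and likewise for $M'$), so Proposition~\ref{r-w} yields, as maps $\bar H_R\to\bar H(M')_R$,
\[
\bar r^{\cat}_{M',R}=\dot w\circ\bar r^{\cat}_{M,R},
\]
where $\dot w$ denotes the isomorphism $\bar H(M)_R\to\bar H(M')_R$ induced by conjugation by $\dot w$ (recall $M'={}^{\dot w}M$).

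Next I would unwind the adjunction~(\ref{e:dual}). Fix $\sigma\in\fkR_R(M)$ and an arbitrary $h\in\bar H_R$. By~(\ref{e:dual}) applied to $M'$ and $\dot w\sigma$, followed by the displayed identity,
\[
\Tr^G_R(h,i_{M',R}(\dot w\sigma))=\Tr^{M'}_R\bigl(\bar r^{\cat}_{M',R}(h),\dot w\sigma\bigr)=\Tr^{M'}_R\bigl(\dot w\,\bar r^{\cat}_{M,R}(h),\dot w\sigma\bigr).
\]
Twisting a representation and the corresponding cocenter element by the same group isomorphism $M\cong M'$ does not change the trace, so the last expression equals $\Tr^M_R\bigl(\bar r^{\cat}_{M,R}(h),\sigma\bigr)$, which by~(\ref{e:dual}) for $M$ equals $\Tr^G_R(h,i_{M,R}(\sigma))$. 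Hence $i_{M,R}(\sigma)$ and $i_{M',R}(\dot w\sigma)$ induce the same linear form on $\bar H_R$ under $\Tr_R$.

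Finally I would conclude that they are already equal in $\fkR_R(G)$. Both are finite combinations of irreducible constituents, all admitting nonzero fixed vectors for a single open compact subgroup $\CK$ with $\mu_G(\CK)\in R^\times$; working at level $\CK$ and passing to the finite-dimensional quotient of $H(G,\CK)_R$ by the common annihilator of these pairwise non-isomorphic simple modules, one sees that their trace functions are $R$-linearly independent, so a virtual $R$-representation of $G$ with vanishing trace against all of $\bar H_R$ must be $0$. This last step is where I expect the only real subtlety: over a field of positive characteristic the trace map $\Tr_R\colon\bar H_R\to\fkR_R(G)^*$ is \emph{not} injective (the spectral density theorem may fail over $R$), so one has to be careful that this degeneracy lies entirely on the Hecke-algebra side and does not disturb the linear independence of the characters of distinct irreducibles. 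An alternative that bypasses this point is a direct argument: conjugation by $\dot w$ gives a $G$-equivariant isomorphism $i_{M',R}^{\,\dot wP\dot w^{-1}}(\dot w\sigma)\cong i_{M,R}^{\,P}(\sigma)$, after which one only needs the independence of normalized parabolic induction from the choice of parabolic subgroup with a prescribed Levi, which is available over $R$.
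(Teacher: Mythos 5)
Your main argument is the paper's own, fully written out: Proposition \ref{r-w} combined with Theorem \ref{cat-r} gives $\bar r^{\cat}_{M',R}=\dot w\circ \bar r^{\cat}_{M,R}$, and then the adjunction (\ref{e:dual}) plus linear independence of characters of pairwise non-isomorphic irreducible admissible $R$-representations (which does hold over $R$; the failure of injectivity is only on the Hecke-algebra side of the pairing, as you correctly note) yields $i_{M,R}=i_{M',R}\circ\dot w$ — the paper simply leaves these unwinding steps implicit. Only your closing aside deserves caution: the independence of normalized parabolic induction from the choice of parabolic with a given Levi is not an off-the-shelf fact over $R$ of positive characteristic (over $\BC$ it goes back to \cite{BDK} via the Langlands classification), and in the mod-$l$ setting it is essentially what the cocenter argument is supplying, so it should not be invoked as an independent shortcut.
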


\begin{remark}
This result is first proved in \cite[Lemma 5.4 (iii)]{BDK} for complex representations. The proof in {\it loc.cit.} is based on the Langlands classification. Notice that here, this follows from the cocenter results. Therefore Lema \ref{con-w} is valid for arbitrary algebraically closed field of characteristic not equal to $p$. 
\end{remark}

By the argument in \cite[\S 5.4 \& 5.5]{BDK}, we have

\begin{proposition}\label{A^2}
We have $A_R^2=a A_R$ for some positive integer $a$ whose prime factors divides $\sharp W_G$ and $$\ker A_R=\sum_{M \subsetneqq G} i_{M, R} (\fkR_R(M)).$$
\end{proposition}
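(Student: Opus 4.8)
\textbf{Proposal for the proof of Proposition \ref{A^2}.}
The plan is to follow the combinatorial argument of Bernstein--Deligne--Kazhdan \cite[\S 5.4 \& 5.5]{BDK} verbatim, checking only that every ingredient it uses is available in our generality (arbitrary algebraically closed $R$ of characteristic $\neq p$, and $\sharp W_G$ invertible in $R$). Recall that $A_R = A_{d(G),R} \cdots A_{1,R}$, where each $A_{l,R} = \prod_{M;\, d(M)=l}(i_{M,R}\circ r_{M,R} - \sharp N_M)$. The key structural input is that the operators $i_{M,R}\circ r_{M,R}$, for $M$ ranging over standard Levi subgroups of a fixed semisimple rank, do not quite commute, but satisfy an ``almost idempotent'' relation modulo contributions from smaller Levi subgroups; this is governed by the Mackey formula $r_{M,R}\circ i_{M',R} = \sum_{w \in {}^M W^{M'}} i^M_{M\cap{}^{\dot w}M',R}\circ \dot w \circ r^{M'}_{M'\cap{}^{\dot w^{-1}}M,R}$, which the excerpt records, together with Lemma \ref{con-w}, which tells us that conjugate standard Levi subgroups induce the same map and so the number of ``diagonal'' terms in the Mackey sum is exactly $\sharp N_M$.

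The concrete steps I would carry out are as follows. First, fix a semisimple rank $l$ and set $B_l := \prod_{M;\, d(M)=l}(i_{M,R} r_{M,R} - \sharp N_M)$ acting on $\fkR_R(M_1)$ for one of the Levi subgroups $M_1$ with $d(M_1)=l$; using the Mackey formula and Lemma \ref{con-w} one shows that on $\fkR_R(M_1)$ the operator $i_{M_1,R}r_{M_1,R}$ acts, modulo the sum of images of $i^{M_1}_{L,R}$ over proper Levi $L \subsetneq M_1$, as multiplication by $\sharp N_{M_1}$; hence $i_{M_1,R}r_{M_1,R} - \sharp N_{M_1}$ annihilates $\fkR_R(M_1)$ modulo those images, and iterating over all $M$ of rank $l$ shows $B_l$ kills $\fkR_R(G)$ modulo $\sum_{d(M')<l} i_{M',R}(\fkR_R(M'))$. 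Second, assemble these: since each $A_{l,R}$ maps $\sum_{d(M')<l}i_{M',R}(\fkR_R(M'))$ into itself (by the Mackey formula again, induction from a small Levi composed with $i_{M}r_{M}$ stays in the span of inductions from Levi subgroups of rank $\le$ that of the small one), applying $A_{d(G),R}\cdots A_{1,R}$ in order produces an operator with $A_R(\fkR_R(G)) \subseteq$ ... and conversely $\ker A_R \supseteq \sum_{M\subsetneq G} i_{M,R}(\fkR_R(M))$; the reverse inclusion $\ker A_R \subseteq \sum_{M\subsetneq G}i_{M,R}(\fkR_R(M))$ follows because $A_R$ is, up to the scalar $a$, a projection onto a complement of that sum. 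Third, track the scalars: at each stage the ``error'' one divides out is an integer built from cardinalities $\sharp N_M$ and from structure constants in the Mackey formula, all of which are integers dividing a power of $\sharp W_G$; multiplying through the $d(G)$ stages yields $A_R^2 = a A_R$ with $a$ a positive integer whose prime factors divide $\sharp W_G$. (Here one uses $\sharp W_G$ invertible in $R$ only to know $a \neq 0$ in $R$, so that $\frac{1}{a}A_R$ is the actual projector; but the identity $A_R^2 = aA_R$ itself is an identity of integral operators and needs no invertibility.)

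The main obstacle, and the only place that genuinely requires care beyond transcribing \cite{BDK}, is verifying that the combinatorial identities of \cite[\S 5.4]{BDK} — which in {\it loc.\ cit.} are phrased for complex representations and at one point invoke the Langlands classification — hold over an arbitrary $R$. The potential trouble spot is precisely Lemma \ref{con-w} ($i_{M,R} = i_{M',R}\circ\dot w$ for conjugate standard Levi subgroups), since the original proof uses Langlands; but the excerpt has already supplied a characteristic-free proof of this via the cocenter (Proposition \ref{r-w} combined with Theorem \ref{cat-r}), so this gap is closed. Beyond that, the arguments in \cite[\S 5.4 \& 5.5]{BDK} are purely formal manipulations with the exact functors $i_{M,R}, r_{M,R}$ and the Mackey formula \cite[\S I.5.5]{Vig} — all of which are valid for smooth $R$-representations for any algebraically closed $R$ with $\charac R \neq p$ — so the proof of \cite{BDK} goes through unchanged, and we simply cite it.
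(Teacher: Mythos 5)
Your proposal follows exactly the route the paper takes: Proposition \ref{A^2} is proved by invoking the combinatorial argument of \cite[\S 5.4 \& 5.5]{BDK} and observing that its only essential inputs, the Mackey formula \cite[\S I.5.5]{Vig} and Lemma \ref{con-w} (the latter now available over any $R$ via Proposition \ref{r-w} and Theorem \ref{cat-r}), hold for arbitrary algebraically closed $R$ with $\charac R\neq p$. Your additional unwinding of the BDK induction and your remark on where invertibility of $\sharp W_G$ enters are consistent with this, so the proposal is correct and matches the paper's proof.
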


Note that \cite{BDK} only consider the case where $R=\BC$. The essential ingredients used in {\it loc.cit.} are the Mackey formula and Lemma \ref{con-w}. As both the ingredients are known now for any algebraic closed field $R$ of characteristic not equal to $p$, Proposition \ref{A^2} is valid in this general situation as well. 

\subsection{}\label{bar-A-def} Let $\bar r_{M, R}^{\red}: \bar H^{\red}_R \to \bar H^{\red}_R(M)$ be the map induced from the map $\bar r_M$. Let $\bar i^{\red}_{M, R}: \bar H^{\red}_R(M) \to \bar H^{\red}_R$ be the map adjoint to the Jacquet functor $r_{M, R}$. Let $\bar A^{\red}_R: \bar H^{\red}_R \to \bar H^{\red}_R$ be the map adjoint to the $A$-operator on $\fkR_R(G)$, i.e. for $f \in \bar H^{\red}_R$ and $\pi \in \fkR_R(G)$, we have $$\Tr^G_R(f, A_R(\pi))=\Tr^G_R(\bar A^{\red}_R(f), \pi).$$

We also have the following description of the elliptic cocenter.

\begin{proposition}\label{im-A}
Assume that $\sharp W_G$ is invertible in $R$. Then $\bar H^{\el, \red}_R=\Im \bar A_R^{\red}$.
\end{proposition}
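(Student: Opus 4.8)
The plan is to establish the two inclusions $\bar H^{\el,\red}_R \subseteq \Im \bar A^{\red}_R$ and $\Im \bar A^{\red}_R \subseteq \bar H^{\el,\red}_R$ separately, using the adjointness between $\bar A^{\red}_R$ and $A_R$ together with Proposition \ref{A^2}, transported from the representation side to the cocenter side via the trace pairing on the reduced cocenter. The key point is that by definition of $\bar H^{\red}_R$ the trace pairing $\Tr^G_R: \bar H^{\red}_R \times \fkR_R(G) \to R$ is nondegenerate in the first variable, so statements about elements of $\bar H^{\red}_R$ can be checked by pairing against all $\pi \in \fkR_R(G)$.

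For the inclusion $\Im \bar A^{\red}_R \subseteq \bar H^{\el,\red}_R$: take $f \in \bar H^{\red}_R$ and a proper standard Levi $M$; I must show $\bar r^{\red}_{M,R}(\bar A^{\red}_R(f)) = 0$. Pairing with an arbitrary $\s \in \fkR_R(M)$ and using the adjunction (\ref{e:dual}) together with Theorem \ref{cat-r}, this equals $\Tr^G_R(\bar A^{\red}_R(f), i_{M,R}(\s)) = \Tr^G_R(f, A_R(i_{M,R}(\s)))$. By Proposition \ref{A^2}, $i_{M,R}(\s) \in \ker A_R$ for every proper $M$, so $A_R(i_{M,R}(\s)) = 0$ and the pairing vanishes; since $\s$ was arbitrary and $\bar r^{\red}_{M,R}(\bar A^{\red}_R(f))$ lives in $\bar H^{\red}_R(M)$, nondegeneracy of the trace pairing for $M$ forces it to be zero. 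Hence $\bar A^{\red}_R(f) \in \bar H^{\el,\red}_R$.

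For the reverse inclusion $\bar H^{\el,\red}_R \subseteq \Im \bar A^{\red}_R$: here the hypothesis that $\sharp W_G$ is invertible in $R$ enters. Let $a$ be the positive integer from Proposition \ref{A^2}, so that $A_R^2 = a A_R$ with $a$ invertible in $R$; then $e = a^{-1} A_R$ is an idempotent on $\fkR_R(G)$, and dually $\bar e = a^{-1}\bar A^{\red}_R$ is an idempotent on $\bar H^{\red}_R$. Thus $\Im \bar A^{\red}_R = \Im \bar e = \ker(1 - \bar e)$. Given $f \in \bar H^{\el,\red}_R$, I must show $f = \bar e(f)$, i.e. $(1-\bar e)(f) = 0$; pairing against $\pi \in \fkR_R(G)$ and using adjointness, $\Tr^G_R((1-\bar e)(f), \pi) = \Tr^G_R(f, (1-e)(\pi))$. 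Now $(1-e)(\pi) \in \ker A_R = \sum_{M \subsetneq G} i_{M,R}(\fkR_R(M))$ by Proposition \ref{A^2} (since $A_R(1-e) = A_R - a^{-1}A_R^2 = 0$), so write $(1-e)(\pi) = \sum_M i_{M,R}(\s_M)$ with $M$ proper. Then $\Tr^G_R(f, (1-e)(\pi)) = \sum_M \Tr^G_R(f, i_{M,R}(\s_M)) = \sum_M \Tr^M_R(\bar r^{\red}_{M,R}(f), \s_M) = 0$ because $f \in \bar H^{\el,\red}_R$ means $\bar r^{\red}_{M,R}(f) = 0$ for all proper $M$. Since $\pi$ was arbitrary, nondegeneracy gives $(1-\bar e)(f)=0$, so $f = \bar e(f) \in \Im \bar A^{\red}_R$.

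The main obstacle is purely formal bookkeeping: one must be careful that $\bar A^{\red}_R$, a priori only defined by its adjointness relation against $A_R$, genuinely satisfies $\bar A^{\red}_R{}^2 = a\bar A^{\red}_R$ as an operator on $\bar H^{\red}_R$ — this follows from $A_R^2 = aA_R$ together with the nondegeneracy of the trace pairing on $\bar H^{\red}_R$, but it should be stated explicitly. One should also confirm that $\bar A^{\red}_R$ decomposes as the adjoint of the composition $A_{d(G),R}\cdots A_{1,R}$ compatibly with $\bar i^{\red}_{M,R}$ and $\bar r^{\red}_{M,R}$ via (\ref{e:dual}) and Theorem \ref{cat-r}, so that the kernel description in Proposition \ref{A^2} transfers; but given those two theorems this is immediate. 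No harmonic analysis or characteristic-zero input is needed — everything runs through the adjunction and the algebraic identities already in hand.
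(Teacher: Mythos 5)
Your argument is correct and follows essentially the same route as the paper: the inclusion $\Im \bar A_R^{\red} \subseteq \bar H^{\el, \red}_R$ is exactly the paper's dualization of $A_R \circ i_{M,R}=0$ via (\ref{e:dual}) and Theorem \ref{cat-r}, and the hypothesis on $\sharp W_G$ enters only through the invertibility of $a$. The only (harmless) difference is in the reverse inclusion, where the paper notes directly that $\bar A_R^{\red}$ acts on an elliptic element by the invertible scalar $\pm a$ (using its factorization into terms $\bar i_{M}\circ\bar r_{M}-\sharp N_M$), whereas you deduce $f=a^{-1}\bar A_R^{\red}(f)$ by pairing against all of $\fkR_R(G)$ and invoking $A_R^2=aA_R$ together with $\ker A_R=\sum_{M\subsetneqq G} i_{M,R}(\fkR_R(M))$; both versions rest on Proposition \ref{A^2} and the nondegeneracy of the trace pairing on the reduced cocenter, so your proof is a valid variant of the paper's.
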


\begin{proof}
We have $A_R \circ i_{M, R}=0$ on $\fkR_R(M)$. Therefore $\bar r^{\cat}_M \circ \bar A_R^{\cat}=0$ on $\bar H_R^{\red}$. Hence $\Im \bar A_R^{\red} \subset \bar H^{\el, \red}$. On the other hand, let $f \in \bar H^{\el, \red}$, then by definition $\bar A_R^{\cat}(f)=\pm a (f)$ and thus $f \in \Im(\bar A_R^{\red})$. 
\end{proof}

\smallskip

\subsection{} 
For any open compact subgroup $\CK$ of $G$, let $H(G, \CK)$ be the space of compactly supported $\CK \times \CK$-biinvariant $\BZ[\frac{1}{p}]$-valued functions on $G$ and $H_R(G, \CK)=H(G, \CK) \otimes_{\BZ[\frac{1}{p}]} R$. Then we have $H=\varinjlim\limits_{\CK} H(G, \CK)$ and $H_R=\varinjlim\limits_{\CK} H_R(G, \CK)$. The trace map $\Tr_R: H_R \to \fkR(H_R)^*$ induces $$\Tr_R: \bar H_R(G, \CK) \to \fkR(H_R(G, \CK))^*.$$

Let $\CK$ be an open compact subgroup of $G$. For any $\nu \in \aleph$, let $H(G, \CK; \nu)=H(G, \CK) \cap H(\nu)$ and $\bar H(G, \CK; \nu)$ be its image in $\bar H$. Then $H(G, \CK) \supsetneqq \oplus_\nu H(G, \CK; \nu)$. However, by \cite[Theorem 4.1]{hecke-1} and \cite[Theorem 6.3]{hecke-2}, we have that

\begin{theorem}\label{In-dec}
Let $\CI_n$ be the $n$-th congruence subgroup of the Iwahori subgroup of $G$. Then $$\bar H(G, \CI_n)=\oplus_{\nu \in \aleph} \bar H(G, \CI_n; \nu), \qquad \bar H^{\red}_R(G, \CI_n)=\oplus_{v \in V_+} \bar H^{\red}_R(G, \CI_n; v).$$
\end{theorem}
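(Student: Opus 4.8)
The plan is to deduce the statement from the level-wise structure of the cocenter established in the prequels: the $\BZ[\frac1p]$-part is essentially \cite[Theorem 4.1]{hecke-1}, and the reduced part over $R$ then follows formally using Theorem~\ref{t:newton} (which is \cite[Theorem 6.3]{hecke-2}). Only one step carries genuine content.

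\emph{Directness.} By construction $\bar H(G,\CI_n)$ is the image of $H(G,\CI_n)$ in $\bar H$, and $\bar H(G,\CI_n;\nu)\subseteq\bar H(\nu)$ for each $\nu$. Since the global Newton decomposition $\bar H=\oplus_{\nu\in\aleph}\bar H(\nu)$ is a direct sum of $\BZ[\frac1p]$-submodules of $\bar H$, the internal sum $\sum_\nu\bar H(G,\CI_n;\nu)$ inside $\bar H(G,\CI_n)$ is automatically direct. The same formal argument, with Theorem~\ref{t:newton} in place of the integral decomposition and using $\bar H^{\red}_R(G,\CI_n;v)\subseteq\bar H^{\red}_R(v)$, shows that $\sum_{v\in V_+}\bar H^{\red}_R(G,\CI_n;v)$ is direct in $\bar H^{\red}_R(G,\CI_n)$. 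Note one cannot instead appeal to flatness of $R$ over $\BZ[\frac1p]$: this fails when $\charac R>0$, so directness of the reduced sum genuinely uses Theorem~\ref{t:newton}.

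\emph{Spanning, integral case.} Let $f\in H(G,\CI_n)$ and write $f=\sum_\nu f_\nu$, where $f_\nu$ is the restriction of $f$ to $G(\nu)$; each $f_\nu$ lies in $H$ because the strata $G(\nu)$ are open and closed (\cite[Theorem 3.2]{hecke-1}), and only finitely many are nonzero since $\supp f$ is compact. By the same local structure of the stratification, for $m$ sufficiently large (depending only on $n$ and $\supp f$) one has $\CI_m g\CI_m\subseteq G(\nu)$ for every $g\in\supp f\cap G(\nu)$, so each $f_\nu$ is in fact $\CI_m$-biinvariant, i.e.\ $f_\nu\in H(G,\CI_m;\nu)$, and $\bar f=\sum_\nu\overline{f_\nu}$ in $\bar H$. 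The crux is then to show that each Newton component $\overline{f_\nu}$ of $\bar f$ already lies in $\bar H(G,\CI_n;\nu)$ --- that is, is represented by a $\CI_n$-biinvariant function supported on $G(\nu)$ --- even though $f_\nu$ itself is only $\CI_m$-biinvariant for a much deeper congruence subgroup $\CI_m$. This is precisely the content of \cite[Theorem 4.1]{hecke-1}: one uses the explicit combinatorial model of $\bar H(G,\CI_n)$ (the conjugation relations $\bar h=\overline{{}^{x}h}$ together with the finiteness of the twisted conjugacy classes meeting $G(\nu)$ at a fixed level, which also yields $\dim\bar H(G,\CI_n;\nu)<\infty$) to replace $f_\nu$ modulo $[H,H]$ by a representative in $H(G,\CI_n;\nu)$. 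Granting this, $\bar f\in\sum_\nu\bar H(G,\CI_n;\nu)$, which with directness gives the first decomposition.

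\emph{Reduced case over $R$.} Tensoring the integral decomposition of $\bar H(G,\CI_n)$ with $R$ and using that tensor products commute with direct sums, the image $\bar H_R(G,\CI_n)$ of $H_R(G,\CI_n)$ in $\bar H_R$ equals $\sum_\nu\bar H_R(G,\CI_n;\nu)$; projecting to $\bar H^{\red}_R=\bar H_R/\ker\Tr_R$ gives $\bar H^{\red}_R(G,\CI_n)=\sum_\nu\bar H^{\red}_R(G,\CI_n;\nu)=\sum_{v\in V_+}\bar H^{\red}_R(G,\CI_n;v)$, which is direct by the first paragraph; hence the second decomposition. The only nonformal input throughout is the ``descent of level'' used in the integral case: a priori the Newton pieces of a $\CI_n$-biinvariant function are bi-invariant only under a deeper congruence subgroup, and the fact that this loss of level evaporates in the cocenter is the technical heart, supplied by \cite[Theorem 4.1]{hecke-1}. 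Everything else is bookkeeping with the global Newton decomposition and with $\ker\Tr_R$.
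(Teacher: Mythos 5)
Your proposal is correct and matches the paper's treatment: the paper gives no independent argument for Theorem \ref{In-dec}, simply citing \cite[Theorem 4.1]{hecke-1} for the integral level-$\CI_n$ decomposition and \cite[Theorem 6.3]{hecke-2} (Theorem \ref{t:newton}) for the reduced statement, which is exactly where you place the nonformal content. The surrounding bookkeeping you supply (directness from the global Newton decompositions, spanning via restriction to strata, base change and projection to $\bar H^{\red}_R$) is sound, apart from the inessential aside about flatness, since direct sum decompositions are preserved by any base change and the real obstruction in the reduced case is the quotient by $\ker\Tr_R$.
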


\subsection{}\label{depth} We recall the relevant properties of the Moy-Prasad filtration. The original references are \cite{MP0,MP} where the notion of depth of a representation is defined for irreducible smooth complex representations. This is extended in \cite{Vig} to the case of mod-$l$ representations. Let $\bar F^{u}$ be a maximal unramified extension of $F$ and $\Gamma=\bar F^u/F$ be the Galois group. Let $\CB(\BG,\bar F^u)$ be the Bruhat-Tits building of $\BG/\bar F^u$ and let $\CB(\BG,F)=\CB(\BG,\bar F^u)^\Gamma$ be the building of $\BG/F$ as in \cite[\S3.1]{MP}. For every $x\in \CB(\BG,F)$, let $\BG_x$ be the parahoric subgroup defined by $x$, a subgroup of finite index in $\BG_x^\dagger=\{g\in \BG\mid g\cdot x=x\}$. For every $r\ge 0$, $\BG_{x,r}$ denotes the Moy-Prasad filtration subgroup of $\BG_x$. These subgroups are defined using the affine root subgroups $\{U_{\psi}\mid \psi(x)\ge r\}$, where $\psi$ are the affine roots of $\BG$ defined with respect to a maximal $\bar F^u$-split torus $T$. Each $\BG_{x,r}$ is normal in $\BG_x$, $\BG_{x,s}\subseteq \BG_{x,r}$ if $s\ge r$, and $[\BG_{x,r},\BG_{x,s}]\subseteq \BG_{x,r+s}.$ Define
\[\BG_{x,r^+}=\cup_{s>r} \BG_{x,s}.
\]
If $r>0$, then $\BG_{x,r}/\BG_{x,r^+}$ is abelian. Since $\BG_{x,r}$ is $\Gamma$-stable, set $G_{x,r}=G_x\cap \BG_{x,r}$ and $G_{x,r^+}=G_x\cap \BG_{x,r^+}.$ Then $G_{x,r}$ and $G_{x,r^+}$ are open normal subgroups of the parahoric subgroup $G_x$. 

Following \cite[Theorem 3.5]{MP} and \cite[II.5]{Vig} for every representation $(\pi,V)\in \textup{Irr}_R(G)$, define the {\it depth} of $\pi$ to be the rational number $r(\pi)\ge 0$ such that for some $x\in \CB(\BG,F)$, the space $V^{G_{x,r(\pi)^+}}$ of $G_{x,r^+}$-fixed vectors is nonzero and $r(\pi)$ is the smallest number with this property. The next result relates this notion to parabolic induction and restriction.

\begin{theorem}[{\cite[Theorems 4.5 and 5.2]{MP}, \cite[II.5.12]{Vig}}]\label{t:MP-ind} The functors of parabolic induction $i_{M,R}$ and parabolic restriction $r_{M,R}$ preserve the depth of representations.
\end{theorem}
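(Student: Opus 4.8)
The plan is to deduce the theorem from the compatibility of the Moy--Prasad filtrations of $G$ and of $M$ along the inclusion $\CB(\BM,F)\subseteq\CB(\BG,F)$, together with an induction on the semisimple rank that reduces the general case to the supercuspidal one. Fix a standard parabolic $P=MN$ with opposite $\bar P=M\bar N$. The structural input I would use is that for $x\in\CB(\BM,F)$, viewed as a point of $\CB(\BG,F)$, and every $r\ge 0$, one has an Iwahori factorization $G_{x,r^+}=(G_{x,r^+}\cap\bar N)(G_{x,r^+}\cap M)(G_{x,r^+}\cap N)$ with $G_{x,r^+}\cap M=M_{x,r^+}$, the Moy--Prasad subgroup of $M$ at $x$ (and likewise without the ${}^+$); that $\delta_P$ is trivial on compact subgroups of $M$; and, from \cite{MP,Vig}, that an irreducible smooth $R$-representation $\pi$ has depth $\le r$ exactly when $V_\pi^{G_{x,r^+}}\neq 0$ for some $x$ --- a property closed under subquotients and extensions, so that the depth of a finite-length representation may be defined as the maximum of the depths of its irreducible subquotients.

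First I would establish the two ``depth does not increase'' estimates. For $r_{M,R}$: Casselman's lemma provides, for $x\in\CB(\BM,F)$, a surjection $V_\pi^{G_{x,r^+}}\twoheadrightarrow (V_\pi)_N^{M_{x,r^+}}$ induced by the Jacquet projection $p_N$; writing an irreducible $\pi$ of depth $r$ as a $G$-module generated by a nonzero $V_\pi^{G_{x_0,r^+}}$, every vector of $(V_\pi)_N$ is a finite sum of vectors $p_N(v)$ with $v\in V_\pi^{G_{gx_0,r^+}}$, each fixed by $M\cap G_{gx_0,r^+}$, a group that contains $M_{y,r^+}$ for a suitable $y\in\CB(\BM,F)$; hence $\mathrm{depth}(r_{M,R}\pi)\le r$. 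For $i_{M,R}$: applying Mackey's formula to $\mathrm{Res}_{G_{x,r^+}}\,i_{M,R}(\sigma)$ over $P\backslash G/G_{x,r^+}$, the summand at the trivial double coset is $\mathrm{Ind}_{P\cap G_{x,r^+}}^{G_{x,r^+}}(\sigma\otimes\delta_P^{1/2})$, whose space of $G_{x,r^+}$-invariants is $\sigma^{M_{x,r^+}}$ because $G_{x,r^+}\cap N$ acts trivially and $\delta_P$ is trivial on $M_{x,r^+}$; taking $x\in\CB(\BM,F)$ that realizes $\mathrm{depth}(\sigma)$ exhibits a nonzero invariant vector of that depth in $i_{M,R}(\sigma)$, and a closer examination of the remaining Mackey summands upgrades this to $\mathrm{depth}(i_{M,R}\sigma)\le\mathrm{depth}(\sigma)$ in the ``every subquotient'' sense.

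Next I would obtain the reverse inequalities, which convert these into ``preserves depth'', simultaneously with the reduction to supercuspidal support, by induction on the semisimple rank. Every irreducible representation of $G$ is a subquotient of some $i_{L,R}(\varrho)$ with $\varrho$ supercuspidal on a Levi $L$. Given an irreducible subquotient $\tau$ of $r_{M,R}(\pi)$, the geometric lemma together with the supercuspidality of $\varrho$ identifies the supercuspidal support of $\tau$ with that of $\pi$, so when $M\subsetneq G$ the inductive hypothesis for $M$ gives $\mathrm{depth}(\tau)=\mathrm{depth}(\varrho)$; meanwhile Frobenius reciprocity and Bernstein's second adjointness realize $\pi$ as a subobject of $i_{M,R}\tau$ or a quotient of the parabolic induction of $\tau$ along $\bar P$, whence $\mathrm{depth}(\pi)\le\mathrm{depth}(\tau)$ by the previous paragraph; combining the two gives equality, and the symmetric argument handles the subquotients of $i_{M,R}(\sigma)$.

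The main obstacle is not any individual estimate but the structural lemma underlying all of them: the precise compatibility of the Moy--Prasad filtrations of $M$ and $G$ along the embedding of buildings, including the assertion that $M\cap G_{x,r^+}$ still contains a Moy--Prasad subgroup of $M$ for an arbitrary point $x\in\CB(\BG,F)$, together with the bookkeeping that promotes ``there exists an invariant vector of the expected depth'' to ``$i_{M,R}$ and $r_{M,R}$ preserve the Serre subcategory of representations of depth $\le r$'' --- which is exactly where the induction on rank and the supercuspidal-support argument are genuinely required. Over a field $R$ of positive characteristic one must also invoke, from \cite{Vig}, that normalized parabolic induction of an admissible representation has finite length and that Frobenius reciprocity and second adjointness remain valid; apart from these inputs, the argument is the same as in the case $R=\BC$ treated in \cite{MP}.
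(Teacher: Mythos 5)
This statement is not proved in the paper at all: it is quoted verbatim from \cite{MP} (Theorems 4.5 and 5.2) and \cite{Vig} (II.5.12), so there is no internal argument to compare with; what can be assessed is whether your sketch would reconstruct the cited proof, and there it has a genuine gap at exactly the hard step. Your treatment of $r_{M,R}$ rests on two claims that do not hold. First, Jacquet's (Casselman's) lemma gives a surjection $V_\pi^{G_{x,r^+}}\twoheadrightarrow (V_\pi)_N^{M_{x,r^+}}$ for $x\in\CB(\BM,F)$, but a surjection onto a possibly zero space proves nothing about the target: the inequality it yields is $\mathrm{depth}(\pi)\le \mathrm{depth}(r_{M,R}\pi)$, the \emph{opposite} of the one you assert. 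Second, the patch you propose --- that every translate $M\cap G_{gx_0,r^+}$ contains some $M_{y,r^+}$ of the same depth --- is false. Take $G=\SL_2$, $M=T$ the diagonal torus, so $M_{y,r^+}=T_{r^+}$ for every $y$; for $z=u\cdot x_0$ with $u$ a unipotent element far from $G_{x_0,0}$, one has $G_{z,r^+}=uG_{x_0,r^+}u^{-1}$, and $u^{-1}tu$ for $t=\mathrm{diag}(a,a^{-1})\in T_{r^+}$ has off-diagonal entry $x(a-a^{-1})$, which leaves the level-$r^+$ congruence subgroup once $|x|>1$; hence $T_{r^+}\not\subseteq G_{z,r^+}$. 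So the vectors $p_N(\pi(g)v)$ carry no depth-$r$ invariance in general, and the inequality $\mathrm{depth}(r_{M,R}\pi)\le\mathrm{depth}(\pi)$ --- which is precisely the content of \cite[Theorem 4.5]{MP} --- is not obtained.

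The same gap propagates to the induction half: your Mackey computation only exhibits one nonzero $G_{x,r^+}$-fixed vector, i.e.\ \emph{some} irreducible subquotient of $i_{M,R}(\sigma)$ of depth $\le\mathrm{depth}(\sigma)$, while the theorem asserts this for every subquotient; the ``closer examination of the remaining Mackey summands'' and the supercuspidal-support induction both end up invoking the missing inequality for $r_{M,R}$ again, so the argument is circular at that point. What actually closes the loop in \cite{MP} is the theory of unrefined minimal $K$-types: every irreducible $\pi$ of depth $r$ contains a minimal $K$-type $(G_{x,r},\chi)$ with $\chi$ nondegenerate (cuspidal in depth zero), and the nondegeneracy/associativity analysis is what transfers such a $K$-type to one for $M$ inside the Jacquet module and back; \cite[II.5]{Vig} is the verification that this machinery survives with mod-$l$ coefficients. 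Your sketch correctly lists the soft ingredients (Iwahori factorization at points of $\CB(\BM,F)$, triviality of $\delta_P$ on compact subgroups, adjunctions), but without the minimal-$K$-type input, or a substitute for it, the decisive inequality is unproved.
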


As a corollary, we relate this to the case of representations with $\CI_n$-fixed vectors.

\begin{corollary} Let $M$ be a Levi subgroup of a parabolic subgroup $P$ and let $\CI_n$ be the $n$-th level Iwahori filtration subgroup of $G$.
\begin{enumerate}
\item The functor $i_{M,R}$ maps $\mathfrak R_R(H(M,\CI_n\cap M))$ to $\mathfrak R_R(H(G,\CI_n))$ and $r_{M,R}$ maps $\mathfrak R_R(H(G,\CI_n))$ to $\mathfrak R_R(H(M,\CI_n\cap M))$.
\item The reduced parabolic functor $i_{M,R}^\red$ maps $\bar H^{\red}_R(M, \CI_n\cap M)$ to $\bar H^{\red}_R(G, \CI_n)$ and $r_{M,R}^\red$ maps $\bar H^{\red}_R(G, \CI_n)$ to $\bar H^{\red}_R(M, \CI_n\cap M)$. 
\item In particular, $\bar A_R^\red$ maps $\bar H^{\red}_R(G, \CI_n)$ to itself.
\end{enumerate}
\end{corollary}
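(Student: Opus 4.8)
The three assertions are proved in the order (1), (2), (3): part (1) carries the representation-theoretic content, and (2)--(3) follow by duality and a formal computation.

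For (1), the plan is to convert Theorem~\ref{t:MP-ind} into the language of $\CI_n$-fixed vectors. The key input is the dictionary --- implicit in the discussion of depth $n-\e$ preceding Theorem~\ref{In-dec}, and going back to \cite[II.5]{Vig}, \cite{MP} --- that $\CI_n$ is a Moy--Prasad filtration subgroup attached to the fundamental alcove and that an irreducible smooth $R$-representation $\pi$ of $G$ satisfies $\pi^{\CI_n}\neq 0$ if and only if $\mathrm{depth}(\pi)<n$; moreover, by the Iwahori factorization $\CI_n=(\CI_n\cap N^-)(\CI_n\cap M)(\CI_n\cap N)$, the group $\CI_n\cap M$ is the $n$-th congruence subgroup of an Iwahori of $M$, so the same dictionary holds for $M$ with $\CI_n\cap M$ in place of $\CI_n$. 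Granting this, (1) is immediate: if $\s\in\fkR_R(H(M,\CI_n\cap M))$, so all its irreducible constituents have depth $<n$, then by Theorem~\ref{t:MP-ind} every constituent of $i_{M,R}(\s)$ has depth $<n$, hence has nonzero $\CI_n$-fixed vectors, so $i_{M,R}(\s)\in\fkR_R(H(G,\CI_n))$; symmetrically for $r_{M,R}$.

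For (2), I would transport (1) across the trace pairing. By Theorem~\ref{cat-r} the extension of $\bar r_M$ over $R$ agrees with $\bar r^\cat_{M,R}$, and $\bar i^\red_{M,R}$ is adjoint to $r_{M,R}$, so \eqref{e:dual} identifies $\Tr^M_R(\bar r^\red_{M,R}(h),\s)$ with $\Tr^G_R(h,i_{M,R}(\s))$ and $\Tr^G_R(\bar i^\red_{M,R}(h_M),\pi)$ with $\Tr^M_R(h_M,r_{M,R}(\pi))$. If $h\in\bar H^\red_R(G,\CI_n)$ and $\s^{\CI_n\cap M}=0$, then by (1) the representation $i_{M,R}(\s)$ has no constituent with $\CI_n$-fixed vectors, so (since $h$ is represented by a function in $H_R(G,\CI_n)$, which acts on any $\pi$ through $\pi^{\CI_n}$) the pairing vanishes; thus $\bar r^\red_{M,R}(h)$ is annihilated by every $M$-representation without $\CI_n\cap M$-fixed vectors, and one deduces $\bar r^\red_{M,R}(h)\in\bar H^\red_R(M,\CI_n\cap M)$. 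The implication ``annihilates the representations without $\CI_n\cap M$-fixed vectors $\Rightarrow$ lies in $\bar H^\red_R(M,\CI_n\cap M)$'' is the delicate point; I would establish it via a Peirce decomposition with respect to the idempotent $e_{\CI_n\cap M}$, using Lemma~\ref{l:commute} to reduce the needed statement to $R=\BC$, or, for the restriction map, directly from the explicit formula $f\mapsto\widetilde f^{(P)}$ of Section~2 together with the Newton decomposition of Theorem~\ref{In-dec}. The statement for $\bar i^\red_{M,R}$ is the mirror image, with $r_{M,R}$ in place of $i_{M,R}$.

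Part (3) is then formal: by its definition the $A$-operator $A_R$ is a polynomial, with a fixed ordering of factors, in the operators $i_{M,R}\circ r_{M,R}$ (together with scalars $\sharp N_M$), so its adjoint $\bar A^\red_R$ is the corresponding polynomial in $\bar i^\cat_{M,R}\circ\bar r^\cat_{M,R}$; each factor maps $\bar H^\red_R(G,\CI_n)$ into $\bar H^\red_R(M,\CI_n\cap M)$ and then back into $\bar H^\red_R(G,\CI_n)$ by (2), and scalars preserve the subspace, so $\bar A^\red_R(\bar H^\red_R(G,\CI_n))\subseteq \bar H^\red_R(G,\CI_n)$. The principal difficulty throughout is thus concentrated in the two dictionaries: $\CI_n$-fixed vectors versus depth (for (1), uniformly over $G$ and its Levi subgroups), and cocenter level versus the representations detecting it (for (2)).
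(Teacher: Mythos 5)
Your proof follows the same route as the paper's: part (1) from Theorem \ref{t:MP-ind} together with the identification of $\CI_n$ (and, via the Iwahori factorization, $\CI_n\cap M$) as the minimal open compact subgroup of depth $(n-\epsilon)^+$ up to conjugacy, part (2) from (1) via the adjunction (\ref{e:dual}), and part (3) formally from (2) since $\bar A_R^{\red}$ is a linear combination of compositions $\bar i^{\red}_{M,R}\circ\bar r^{\red}_{M,R}$. The only place you go beyond the paper is the ``delicate point'' you flag in (2), which the paper treats as immediate; if you do want to close it, the workable patch is the depth (level) decomposition of the category of smooth $R$-representations from \cite[II.5]{Vig}, which splits $H(M)_R$ into two-sided ideals annihilated respectively by the irreducibles of depth $\ge n$ and depth $<n$, combined with the observation that $H(M)_R\,e_{\CI_n\cap M}\,H(M)_R$ and $e_{\CI_n\cap M}H(M)_R\,e_{\CI_n\cap M}$ have the same image in the cocenter. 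By contrast, a bare Peirce decomposition with respect to $e_{\CI_n\cap M}$ does not suffice (the corner supported away from the idempotent can still have nonzero trace on depth-$<n$ irreducibles), Lemma \ref{l:commute} cannot reduce the question to $\BC$ since the statement lives in the reduced cocenter over $R$ rather than the integral cocenter, and the explicit function $\widetilde f^{(P)}$ need not be $\CI_n\cap M$-biinvariant, so that route would also require the level decomposition in the end.
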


\begin{proof}
Claim (3) follows from (2) since $\bar A_R^\red$ is the linear combination of composition of maps of the form $i_{M,R}^\red\circ r_{M,R}^\red$. Claim (2) follows from (1) by the adjunction properties (\ref{e:dual}). It remains to justify Claim (1). Let $x_0\in  \CB(\BG,F)$ be such that $\CI=G_{x,0^+}$. We have $\CI_n=G_{x_0,(n-\epsilon)^+}$ for $\epsilon>0$ infinitesimally small. But then Claim (1) follows from Theorem \ref{t:MP-ind}, since for every $x\in \CB(\BG,F)$, there exists $g\in G$ such that $\CI_n\subseteq {}^gG_{x,(n-\epsilon)^+}$. (In other words, up to associates, $\CI_n$ is the smallest open compact subgroup of depth $(n-\epsilon)^+$.)
\end{proof}

\

Now we prove the elliptic trace Paley-Wiener theorem.

\begin{theorem}\label{disc}
Assume that $\sharp W_G$ is invertible in $R$. Let $n$ be a positive integer. Then the trace map $$\Tr_R: \bar H_R(G, \CI_n) \to A_R(\fkR(H_R(G, \CI_n)))^*_{\good}$$ is surjective.  

In particular,  the trace map $\Tr_R: \bar H_R \to A_R(\fkR_R(G))^*_{\good}$ is surjective. 
\end{theorem}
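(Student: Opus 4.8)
The plan is to run the combinatorial argument of \cite[\S\S5.4--5.5]{BDK} through the $A$-operator, feeding in the finiteness of the rigid cocenter at a fixed level coming from Theorem \ref{main} and the Newton decomposition for depth $n-\e$ in place of the ``finiteness of discrete central characters'' used in {\it loc.\ cit.}

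\emph{Reduction to the elliptic cocenter.} Fix $n$ and work with the level-$n$ pieces $\bar H_R(G,\CI_n)$ and $\fkR(H_R(G,\CI_n))$, which are preserved by $A_R$, $\bar A^{\red}_R$ and all the $i_{M,R}, r_{M,R}, \bar i^{\red}_{M,R},\bar r^{\red}_{M,R}$ by the Corollary following Theorem \ref{t:MP-ind}. Since $\sharp W_G$ is invertible in $R$, so is the integer $a$ of Proposition \ref{A^2}; hence $\tfrac1a A_R$ is an idempotent with $\ker A_R=\sum_{M\subsetneqq G} i_{M,R}\big(\fkR(H(M,\CI_n\cap M))\big)$, and $\tfrac1a\bar A^{\red}_R$ is an idempotent on $\bar H^{\red}_R(G,\CI_n)$ whose image, by the level-$n$ form of Proposition \ref{im-A}, is $\bar H^{\el,\red}_R(G,\CI_n)$. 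By (\ref{e:dual}), $\Tr_R$ sends $\bar H^{\el,\red}_R(G,\CI_n)$ into the good forms vanishing on $\ker A_R$, i.e.\ into $A_R\big(\fkR(H_R(G,\CI_n))\big)^*_{\good}$, injectively (we are on the reduced cocenter); and the image of $\Tr_R$ on all of $\bar H_R(G,\CI_n)$, restricted to $A_R\big(\fkR(H_R(G,\CI_n))\big)$, is precisely $\Tr_R\big(\bar H^{\el,\red}_R(G,\CI_n)\big)$. So one is reduced to showing this injection is surjective.

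\emph{Finiteness and the matching.} By Theorem \ref{main}, $\bar H^{\el,\red}_R(G,\CI_n)\subseteq\bar H^{\rig,\red}_R(G,\CI_n)$, which by Theorem \ref{In-dec} is $\bigoplus_{\nu}\bar H^{\red}_R(G,\CI_n;\nu)$ over $\nu\in\aleph$ with central Newton factor, each summand finite-dimensional by \cite[Theorem 4.1]{hecke-1}. For semisimple $G$ only finitely many such $\nu$ occur, so $\bar H^{\el,\red}_R(G,\CI_n)$ is finite-dimensional; in general the central translations (convolution by $\d_{z\CK}$, $z\in Z(G)$) act on the rigid cocenter shifting the Kottwitz factor, and as there are only finitely many orbits of central Newton strata, $\bar H^{\el,\red}_R(G,\CI_n)$ is finitely generated over the resulting Laurent polynomial ring $R[X_1^{\pm1},\dots,X_d^{\pm1}]$. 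Dually, since every $\pi\in A_R\big(\fkR(H_R(G,\CI_n))\big)$ pairs to zero with $\ker\bar A^{\red}_R$, the trace pairing embeds $A_R\big(\fkR(H_R(G,\CI_n))\big)$, modulo the virtual representations pairing trivially with the whole cocenter, into $\big(\bar H^{\el,\red}_R(G,\CI_n)\big)^*$, so that this ``elliptic Grothendieck group at level $n$'' is again finitely generated over the same ring, on which a linear form is good exactly when it is regular for this action. The two finitely generated modules are paired by $\Tr_R$ with each embedding into the dual of the other; hence both embeddings are isomorphisms, $\Tr_R$ is onto, and the first assertion follows.

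\emph{The last assertion.} A good form on $\fkR_R(G)$ vanishing on all $i_{M,R}(\fkR_R(M))$ factors through $\fkR(H_R(G,\CI_n))$ for some $n$ by the compact-support clause in the definition of good forms, and $\bar H_R=\varinjlim_n\bar H_R(G,\CI_n)$; applying the level-$n$ case and passing to the limit gives it. The main obstacle is the finiteness in the previous step together with its representation-side counterpart: realizing both the elliptic cocenter and the elliptic part of the Grothendieck group at a fixed level as finitely generated modules over the coordinate ring of $\Psi(G)_R$, paired non-degenerately modulo the ``null'' virtual representations. This is exactly where the classical proofs invoke the structure theory of irreducible representations, and routing everything through the rigid cocenter and the direct-sum Newton decomposition is what makes the argument go through mod $l$.
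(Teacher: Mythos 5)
Your first paragraph is essentially the paper's reduction: the level-$n$ spaces are stable under the parabolic functors and the $A$-operators, the integer $a$ of Proposition \ref{A^2} is invertible, and the identity $\Tr_R(f',\pi)=\tfrac1a\Tr_R(\bar A^{\red}_R(f'),\pi)$ for $\pi$ in the image of $A_R$, combined with Theorem \ref{main} and Theorem \ref{In-dec}, puts everything into the level-$n$ rigid reduced cocenter. The genuine gap is in your ``finiteness and matching'' step for non-semisimple $G$. First, the trace pairing is $R$-valued, so embedding the ``elliptic Grothendieck group at level $n$'' into the $R$-linear dual $\bigl(\bar H^{\el,\red}_R(G,\CI_n)\bigr)^*$ gives no finite generation over $R[X_1^{\pm1},\dots,X_d^{\pm1}]$: the $R$-linear dual of a finitely generated module over that ring is essentially a product of copies of $R$ and is not finitely generated, so ``again finitely generated over the same ring'' does not follow. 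Second, the principle you invoke --- two finitely generated modules, each embedding into the dual of the other, must both map isomorphically onto those duals --- is valid only for finite-dimensional vector spaces; over a Laurent polynomial ring it fails (take $M=N=R[X^{\pm1}]$ paired by $(f,g)\mapsto$ constant term of $(X-1)fg$: both maps are injective, neither is surjective). Finally, the identification ``a linear form is good exactly when it is regular for this action'' is asserted, not proved. So the argument does not yield surjectivity onto $A_R(\fkR(H_R(G,\CI_n)))^*_{\good}$ for general $G$.

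The paper sidesteps all of this by reducing \emph{at the outset} to semisimple $G$ using twists by unramified central characters; then only finitely many $\nu\in\aleph$ with $M_\nu=G$ and $G(\nu)\neq\emptyset$ occur, so $\bar H_R(G,\CI_n)^{\rig,\red}$ is literally finite dimensional, and the rest is a separation argument: if a good form $f$ on $A_R(\fkR(H_R(G,\CI_n)))$ were not a trace, finite dimensionality of the rigid piece produces $\pi$ in that space with $\Tr_R\bigl(\bar H_R(G,\CI_n)^{\rig,\red},\pi\bigr)=0$ and $f(\pi)\neq 0$; the identity $\Tr_R(f',\pi)=\tfrac1a\Tr_R(\bar A^{\red}_R(f'),\pi)$ together with $\bar A^{\red}_R(f')\in\bar H_R(G,\CI_n)^{\rig,\red}$ then forces $\Tr_R(f',\pi)=0$ for \emph{all} $f'\in\bar H_R(G,\CI_n)$, hence $\pi=0$ by linear independence of characters of simple $H_R(G,\CI_n)$-modules --- a contradiction. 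Note that this last non-degeneracy on the representation side (which you elide as ``modulo the virtual representations pairing trivially with the whole cocenter'') is needed even in your scheme to identify forms on the quotient with forms on $A_R(\fkR(H_R(G,\CI_n)))$. If you want to keep your module-theoretic picture for general $G$, you would have to replace the $R$-valued pairing by a genuinely $R[\Psi(G)_R]$-linear one and prove a duality statement there; the paper's central-character reduction is the short way around.
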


\begin{proof}
Using the action of the unramified central characters of $G$, we may reduce to the case where $G$ is semisimple. In this case, there are only finitely many $\nu \in \aleph$ with $M_\nu=G$ and $G(\nu)\neq \emptyset$. By \cite[Theorem 5.3]{hecke-1}, $\bar H_R(G, \CI_n; \nu)$ is finite dimensional for each $\nu$. Hence $\bar H_R(G, \CI_n)^\rig$ is finite dimensional. 

Let $f \in A_R(\fkR(H_R(G, \CI_n)))^*_{\good}$. Suppose that $f$ is not contained in the image of $\Tr_R$. Then there exists $$\pi \in A_R(\fkR(H_R(G, \CI_n))) \subset \fkR(H_R(G, \CI_n))$$ such that $\Tr_R(\bar H_R(G, \CI_n)^{\rig, \red}, \pi)=0$ and $f(\pi) \neq 0$. 

Let $f' \in \bar H^{\red}_R(G, \CI_n)$. Then $$\Tr_R(f', \pi)=\Tr_R(f', \frac{1}{a} A_R(\pi))=\frac{1}{a} \Tr_R(f', A_R(\pi))=\frac{1}{a} \Tr_R(\bar A^{\red}_R(f'), \pi).$$

By \S\ref{depth} and the Newton decomposition on $\bar H^{\red}_R(G, \CI_n)$ (see Theorem \ref{In-dec}), we have $$\bar A^{\red}_R(f') \in \bar H^{\red}_R(G, \CI_n) \cap \bar H^{\rig, \red}_R=\bar H_R(G, \CI_n)^{\rig, \red}.$$ Thus by the assumption, $\Tr_R(f', \pi)=0$ for any $f' \in \bar H_R(G, \CI_n)$. Therefore $\pi=0 \in 
\fkR(H_R(G, \CI_n))$ and $f(\pi)=0$. This is a contradiction. 

The ``in particular'' part follows from the fact that $H_R=\varinjlim\limits_{\CI_n} H_R(G, \CI_n)$.
\end{proof}

\subsection{} Now we explain how to deduce Theorem \ref{trace-p-w} from Theorem \ref{disc}. The argument is the same as in \cite{BDK}. We sketch it here for the convenience of the reader. 

We assume that the trace Paley-Wiener theorem holds for all proper Levi subgroups. Let $f \in R(G)^*_{\good}$. Then $f \mid_{A_R(\fkR_R(G))} \in A_R(\fkR_R(G))^*_{\good}$. By Theorem \ref{disc}, there exists $h \in \bar H_R$ such that $f-\Tr_R(h)$ vanishes on $A_R(\fkR_R(G))$. Therefore $A_R^*(f-\Tr_R(h))=0$ as a linear form on $\fkR_R(G)$. We have $A_R^* \Tr_R(h)=\Tr_R(\bar A^{\red}_R(h))$ as a linear form on $\fkR_R(G)$. Hence $$A_R^*(f)=a f-\sum_{M \text{ proper}} c_M r^*_{M, R} i^*_{M, R} (f) \in \Tr_R(\bar H_R).$$ By \cite[Proposition 3.2]{BDK}, $i_{M, R}^*(f)$ is a good form on $\fkR_R(M)$. By the inductive hypothesis on $M$, we have $$r^*_{M, R} i^*_{M, R} (f) \in r^*_{M, R} \fkR_R(M)^*_{\good}=r^*_{M, R} \Tr_R(\bar H_R(M)) \subset \Tr_R(\bar H_R).$$ Therefore $a f\in \Tr_R(\bar H_R)$ and $f \in \Tr_R(\bar H_R)$. 

\subsection{} We can now prove the rigid trace Paley-Wiener theorem. Define the rigid quotient of the Grothendieck group of representations
$$\fkR_R(G)_\rig=\fkR_R(G)/\fkR_R(G)_{\mathsf{diff}},$$ where 
$\fkR_R(G)_{\mathsf{diff}} \subset \fkR_R(G)$ is spanned by $i_M(\s)-i_M(\s \otimes \psi)$. Here $M$ ranges over the set of standard Levi subgroups, $\s\in\fkR_R(M)$ and $\psi$ is an unramified character of $M$ over $R$ which is trivial on $Z(G)^0$, the identity component of the center of $G$.

This definition is motivated by \cite{CH}, where the analogous notion for affine Hecke algebras was studied. For affine Hecke algebras, it is proved in \cite{CH} that for generic parameters, the trace map gives a perfect pairing between the the rigid cocenter of the affine Hecke algebras and the rigid quotient of Grothendieck group of representations. The advantage of considering the rigid cocenter and the rigid quotient instead of the whole cocenter and the Grothendieck group is that for affine Hecke algebras with semisimple root data, both the rigid cocenter and the rigid quotient are finite dimensional and the dimension can be computed explicitly. This allows us to have a good understanding of the relation between the cocenter and representations. 

We expect a similar phenomenon for the Hecke algebra and representations of $p$-adic groups. In the rest of this section, we establish the trace Paley-Wiener for the rigid cocenter. 

Let $(\fkR_R(G))^*_{\rig}$ be the $R$-linear functions on $\fkR_R(G)_\rig$ and $(\fkR_R(G))_{\rig, \good}^*=(\fkR_R(G))^*_\rig\cap (\fkR_R(G))^*_{\good}.$

\begin{proposition}
The trace map $\Tr_R: \bar H^{\rig, \red}_R \to (\fkR_R(G))_{\rig, \good}^*$ is surjective. 
\end{proposition}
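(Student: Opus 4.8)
The plan is to derive the statement from the unrestricted trace Paley--Wiener theorem. Since $\Tr_R$ is injective on $\bar H^\red_R$ by construction, Theorem~\ref{trace-p-w} makes $\Tr_R\colon\bar H^\red_R\xrightarrow{\sim}\fkR_R(G)^*_{\good}$ an isomorphism, and under it the assertion becomes the identity
\[
\bar H^{\rig,\red}_R=\bigl\{\,h\in\bar H^\red_R\ :\ \Tr_R(h)\ \text{vanishes on}\ \fkR_R(G)_{\mathsf{diff}}\,\bigr\},
\]
whose inclusion $\subseteq$ says that $\Tr_R$ does land in $(\fkR_R(G))^*_{\rig,\good}$ and whose inclusion $\supseteq$ is the desired surjectivity. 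Throughout, for a standard Levi $M$ I write $\Psi^G(M)_R$ for the group of unramified characters of $M$ trivial on $Z(G)^0$ (a torus of dimension $d(M)-d(G)$; equivalently, the characters of $M$ factoring through the maximal torsion--free quotient of $M/M^0Z(G)^0$), and I use the identity $\Tr^M_R(h_M,\s\otimes\psi)=\Tr^M_R(\psi\cdot h_M,\s)$, where $\psi\cdot$ is pointwise multiplication --- an algebra automorphism of $H(M)$, hence descending to $\bar H(M)$ and to $\bar H^\red(M)_R$ and respecting the Newton decomposition.

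For ``$\subseteq$'': let $h\in\bar H^{\rig,\red}_R$. Combining the adjunction~(\ref{e:dual}) with Theorem~\ref{cat-r} gives $\Tr_R\bigl(h,\,i_M(\s)-i_M(\s\otimes\psi)\bigr)=\Tr^M_R\bigl(\bar r_M(h)-\psi\cdot\bar r_M(h),\,\s\bigr)$. By Proposition~\ref{r-N} and the Lemma following it, $\bar r_M$ carries the part of $\bar H$ with $W_G$-fixed (i.e.\ $Z(G)^0$-central) Newton point into the analogous part of $\bar H(M)$; as $\bar H^{\rig,\red}_R=\bigoplus_{v;\,M_v=G}\bar H^\red_R(v)$ is precisely this part, $\bar r_M(h)$ is represented by functions supported on elements of $M$ that are compact modulo $Z(G)^0$. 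Any such element has torsion image in $M/M^0Z(G)^0$, hence is killed by every $\psi\in\Psi^G(M)_R$, so $\psi\cdot\bar r_M(h)=\bar r_M(h)$ and the displayed difference vanishes; thus $\Tr_R(h)$ kills $\fkR_R(G)_{\mathsf{diff}}$.

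For ``$\supseteq$'': let $f\in\fkR_R(G)^*_{\good}$ vanish on $\fkR_R(G)_{\mathsf{diff}}$ and let $h\in\bar H^\red_R$ be the unique element with $\Tr_R(h)=f$. Decompose $h=\sum_{v\in V_+}h_v$ by the (honest) Newton decomposition of $\bar H^\red_R$ (Theorem~\ref{t:newton}); fix $v$ with $M:=M_v\subsetneq G$, so that $v$ is $M$-central but not $G$-central, and we must show $h_v=0$. By \cite[Theorem~A]{hecke-2}, $h_v=\bar i_v(g)$ for some $g\in\bar H^\red_R(M;v)$; since distinct $v'\in V_+$ give the pairwise disjoint $W_G$-orbits $W_Gv'$, Proposition~\ref{r-N} shows that only $h_v$ contributes to the $v$-graded piece of $\bar r_M(h)$, and by the weak Mackey formula (Proposition~\ref{mackey}) that piece equals $g$. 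On the other hand, applying the hypothesis to the relations $i_M(\s)-i_M(\s\otimes\psi)$ with $\psi\in\Psi^G(M)_R$ gives, via~(\ref{e:dual}), Theorem~\ref{cat-r} and injectivity of $\Tr^M_R$ on $\bar H^\red(M)_R$, that $\psi\cdot\bar r_M(h)=\bar r_M(h)$; hence $g$ is fixed by all of $\Psi^G(M)_R$. Now refine using the honest decomposition $\bar H_R(M;v)=\bigoplus_{\tau\in\Omega_M}\bar H_R(M;(\tau,v))$: $\psi\cdot$ acts on the $\tau$-summand by the scalar $\psi(\gamma_\tau)$, $\gamma_\tau\in M(\tau,v)$, because $\psi$ factors through the Kottwitz quotient $M/M^0$ and is therefore constant on $M(\tau,v)$. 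Since $v$ is $M$-central, $v$ is the image of the Kottwitz class of $\gamma_\tau$ in $\pi_1(\mathbf M)_\Gamma\otimes\BR$, and being non--$G$-central it is nonzero in the direction transverse to $Z(G)^0$; hence the image of $\gamma_\tau$ in $M/M^0Z(G)^0$ has infinite order, so (as $R$ is algebraically closed, $R^\times$ is divisible) the character $\psi\mapsto\psi(\gamma_\tau)$ of $\Psi^G(M)_R$ is non-trivial. Consequently $\bar H^\red_R(M;v)$, being a quotient --- as a $\Psi^G(M)_R$-module --- of the semisimple module $\bigoplus_\tau\bar H_R(M;(\tau,v))$ on which $\Psi^G(M)_R$ acts through non-trivial characters, has no nonzero $\Psi^G(M)_R$-invariant vector. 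Therefore $g=0$, so $h_v=0$; thus $h\in\bar H^{\rig,\red}_R$ and $f=\Tr_R(h)$ lies in its image.

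The heart of the matter is the last step: having isolated a single non--$G$-central Newton component $g$ of $\bar r_M(h)$ (for which the disjointness of the $W_G$-orbits, Proposition~\ref{r-N} and the weak Mackey formula already do real work), one must force $g=0$ purely from its invariance under all unramified twists trivial on $Z(G)^0$. The decisive input is that a non--$G$-central Newton point makes the relevant Kottwitz classes of infinite order modulo $M^0Z(G)^0$, so the corresponding twist characters are non-trivial; here algebraic closedness of $R$ is essential, since in positive characteristic a twist character can fail to separate a torsion class but never moves an infinite--order one. The remaining care is bookkeeping --- keeping the honest $\aleph_M$-refinement of the Newton decomposition of $\bar H(M)$ separate from the passage to the reduced cocenter, handled above by treating $\bar H^\red_R(M;v)$ simply as a quotient $\Psi^G(M)_R$-module.
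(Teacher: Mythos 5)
Your surjectivity argument is essentially the paper's own proof: both invoke Theorem \ref{trace-p-w} to write $f=\Tr_R(h)$, decompose $h=\sum_v h_v$ by Theorem \ref{t:newton}, write $h_v=\bar i_v(g)$, and use Proposition \ref{r-N}, the disjointness of the $W_G$-orbits and the weak Mackey formula (Proposition \ref{mackey}) to isolate $g$ as the $v$-component of $\bar r_{M_v}(h)$. The only difference is the last step: the paper kills $g$ by observing that $\chi\mapsto\Tr_R(h_v,i_{M_v}(\sigma\otimes\chi))$ is a regular function of $\chi$ with nonzero leading term $\Tr_R(g,\sigma)\langle\chi,v\rangle$, whereas you phrase the same mechanism as the absence of $\Psi^G(M)_R$-invariants in a quotient of $\bigoplus_\tau\bar H_R(M;(\tau,v))$; your version is, if anything, more careful about the Kottwitz torsion in the fibre over $v$, and correctly keeps the honest $\aleph_M$-decomposition separate from the reduced cocenter.

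The one point to repair is in your ``$\subseteq$'' direction (which the paper disposes of in a single unproved sentence). The parenthetical ``equivalently, the characters of $M$ factoring through the maximal torsion-free quotient of $M/M^0Z(G)^0$'' is not an equivalence: $(M/M^0)/\mathrm{im}\,Z(G)^0$ can have torsion (already for $M=G=\mathrm{GL}_2$, where the image of $Z(G)^0$ in $G/G^0\cong\BZ$ is $2\BZ$), and a character trivial on $Z(G)^0$ then takes root-of-unity values, not necessarily $1$, on classes that are merely torsion. So ``torsion image $\Rightarrow$ killed by every $\psi$'' fails with the paper's literal definition of $\fkR_R(G)_{\mathsf{diff}}$; indeed with that definition $\Tr_R(h,i_M(\sigma)-i_M(\sigma\otimes\psi))$ need not vanish for rigid $h$ (take $G=\mathrm{GL}_2$, $h$ supported on $\left(\begin{smallmatrix}0&1\\ \varpi&0\end{smallmatrix}\right)\CI$ and $\psi$ the unramified quadratic character of the determinant). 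The fix is to let $\psi$ range over characters factoring through the maximal torsion-free quotient of $M/M^0Z(G)^0$: your ``$\supseteq$'' argument (and the paper's leading-term argument) only ever uses such $\psi$, since an infinite-order class still admits a nontrivial character of this kind, while your ``$\subseteq$'' argument then becomes correct as written. As it stands, the two halves of your proof quietly use two different groups of twists, and this mismatch should be flagged or the definition adjusted.
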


\begin{proof}
Note first that $\fkR_R(G)_{\mathsf{diff}}$ vanished on $\bar H^\rig$. Hence the trace map induces a map $\bar H^\rig_R \to (\fkR_R(G))_{\rig, \good}^*$. 

Now let $f \in (\fkR_R(G))_{\rig, \good}^*$. By Theorem \ref{trace-p-w}, $f=\Tr_R(h)$ for some $h \in \bar H_R^{\red}$. We write $h$ as $h=\sum_{v \in V_+} h_v$, where $h_v \in \bar H(v)_R^{\red}$. 

By \cite[Theorem 4.1 and Theorem 6.1]{hecke-2}, for any $v \in V_+$, $\bar i_v: \bar H_R(M_v; v)^{\red} \to \bar H_R(G; v)^{\red}$ is bijective. Thus $h_v=\bar i_v(h'_v)$ for some $h'_v \in \bar H_R(M_v; v)^{\red}$. If $h_v \neq 0$, then $h'_v \neq 0$ as an element in $\bar H_R(M_v)^{\red}$. Thus by the definition of reduced cocenter, there exists $\s \in \fkR_R(M_v)$ such that $\Tr_R(h'_v, \s) \neq 0$. By Theorem \ref{cat-r} and the Mackey-type formula (Proposition \ref{mackey}), we have that $$\Tr_R(h_v, i_{M_v}(\sigma \circ \chi))=\Tr_R(\bar r_{M_v} \bar i_v(h'_v), \s \circ \chi)$$ is a regular function on $\chi$, with leading term $\Tr_R(h'_v, \s) \<\chi, v\>$. 

By the definition of $\fkR(G)_{\mathsf{diff}}$, we have that $h_v=0$ for any $v \in V_+$ such that $M_v \neq G$. Therefore $h \in \bar H^{\rig, \red}_R$. 
\end{proof}

\section{Application: the abstract Selberg principle}

\subsection{} Define $$\aleph_c=\{\nu=(\t, v) \in \aleph; \t \text{ is of finite order in } \Omega, M_v=G\}.$$ Let $G^c \subset G$ be the subset of compact elements. Then we have that $$G^c=\sqcup_{\nu \in \aleph_c} G(\nu).$$ Let $H^c=\oplus_{\nu \in \aleph_c} H(\nu)$ be the subset of $H$ consisting of functions supported in $G^c$ and $\bar H^c=\oplus_{\nu \in \aleph_c} \bar H(\nu)$ be the image of $H^c$ in $\bar H$. Note that by definition, the rigid cocenter $\bar H^\rig$ consists of elements in the cocenter represented by functions with support in the subset of compact-modulo-center elements of $G$. Thus $\bar H^c \subseteq \bar H^\rig$ and the equality holds if $G$ is semisimple. 

Let $\fkK_R(G)$ be the $R$-vector space with a basis given by the isomorphism classes of indecomposable finitely generated projective $G$-modules over $R$. Let $$\Rk_R: \fkK_R(G) \to \bar H_R$$ be the rank map, which sends a finitely generated projective module to the image in $\bar H$ of the trace of the idempotent of $M_n(H)$ defining it as a quotient of $H^n$. The main result of this section is the following abstract Selberg principle. 

\begin{theorem}\label{Sel}
The image of the rank map $\Rk_R$ is contained in $\bar H_R^c$. 
\end{theorem}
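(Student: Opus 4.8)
The plan is to prove this by induction on the semisimple rank of $G$, following the strategy sketched in the introduction for the abstract Selberg principle. Let $f=\Rk_R(\pi)$ be the image under the rank map of a finitely generated projective $G$-module $\pi$ over $R$. Since the rank map is compatible with the Jacquet functor (the Jacquet functor sends finitely generated projective modules to finitely generated projective modules, and the induced map on symbols $(\pi,u)$ is precisely $\bar r^{\cat}_{M,R}$), we have $\bar r^{\red}_{M,R}(f)=\Rk_R(r_{M,R}(\pi))$ for every proper standard Levi subgroup $M$. By the inductive hypothesis applied to $M$, this lies in $\bar H(M)^c_R$, hence in $\bar H(M)^{\rig,\red}_R=\bigoplus_{v'\in V^M_+;\,M_{v'}=M}\bar H^{\red}_R(M;v')$ — in fact in the smaller subspace corresponding to $\nu'$ with finite-order Kottwitz factor.

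Next I would decompose $f=\sum_{v\in V_+}f_v$ with $f_v\in\bar H^{\red}_R(v)$ via the Newton decomposition of Theorem \ref{t:newton}. By Proposition \ref{r-N} (extended to the reduced cocenter), $\bar r^{\red}_{M,R}(f_v)$ lies in $\bigoplus_{v'\in W_G(v);\,v'\text{ is }M\text{-dominant}}\bar H^{\red}_R(M;v')$, and these subspaces are disjoint for distinct $v\in V_+$. Therefore the constraint "$\bar r^{\red}_{M,R}(f)\in\bar H(M)^{\rig,\red}_R$ for all proper $M$" forces, for each $v\in V_+$ with $M_v\neq G$, that $\bar r^{\red}_{M,R}(f_v)$ vanishes for every proper Levi — because if $v$ is $M$-dominant and conjugate to $v'$ with $M_{v'}=M$, then $v'$ would be central in $M$, i.e. $M\subseteq M_v$, contradicting $M$ proper unless $v$ is already $G$-dominant central. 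More carefully: if $v$ is non-central (so $M_v\subsetneq G$), one checks that for any proper standard $M$, every $M$-dominant conjugate $v'$ of $v$ has $M_{v'}\supsetneq M$ is impossible to be exactly $M$, so the $\bar H^{\rig,\red}_R(M)$-component of $\bar r^{\red}_{M,R}(f_v)$ is zero; combined with the inductive conclusion this gives $\bar r^{\red}_{M,R}(f_v)=0$. Then Theorem \ref{main'} (injectivity of $\bar r_M$ on $\bar H^{\red}_R(G;v)$ for $M=M_v$) shows $f_v=0$ whenever $M_v\neq G$. Hence $f\in\bar H^{\rig,\red}_R$, i.e. $f$ is supported (mod commutators) on compact-modulo-center elements.

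Finally, to upgrade from "compact-modulo-center" to "compact", one must kill the contribution of Newton components $\bar H(\nu)$ with $\nu=(\t,v)$, $M_v=G$, but $\t$ of infinite order in $\Omega$. Here one uses that $\Rk_R$ is compatible with twisting by an unramified character $\psi$ of $G$ trivial on $Z(G)^0$: on one hand $\Rk_R(\pi\otimes\psi)=\Rk_R(\pi)$ since tensoring by $\psi$ does not change the idempotent up to the relations (C1)–(C3) — more precisely, the classical argument of Dat via triviality of the unramified-twist action on $\fkK_R(G)$, which is available integrally; on the other hand, on the Newton component indexed by $\nu=(\t,v)$ with $v$ central, twisting by $\psi$ acts by the scalar $\psi(\t)$ (reading $\t\in\Omega=\tW/W_a$ against the cocharacter group). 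Since one can choose $\psi$ with $\psi(\t)\neq 1$ precisely when $\t$ has infinite order, invariance forces the corresponding component of $f$ to vanish, leaving only $\nu\in\aleph_c$. I expect the main obstacle to be this last step: establishing the integral (not merely over $R$) compatibility of the rank map with unramified twists and the precise action of unramified characters on each Newton component — the analogue of Dat's input, which must be redone without the harmonic-analytic tools used in characteristic zero, presumably by a direct computation on the level of idempotents together with the explicit description of $\bar H(\nu)$ from \cite{hecke-1}.
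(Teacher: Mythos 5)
Your overall strategy (induction on the semisimple rank, compatibility of the rank map with the Jacquet functor, Newton decomposition, Proposition \ref{r-N}, Theorem \ref{main'}) is the one the paper uses, but two steps are genuinely defective. First, you run the whole argument in the reduced cocenter: you decompose $f$ via Theorem \ref{t:newton} and aim at $f\in\bar H^{\rig,\red}_R$. The theorem, however, is a statement in $\bar H_R$ itself, and your argument can at best yield $\Rk_R(\varPi)\in \bar H^{c}_R+\ker\Tr_R$, which is strictly weaker. This is exactly the point of Remark \ref{remark-mackey}: for the Selberg principle one needs the Mackey-type formula and the injectivity in Theorem \ref{main'} at the level of the honest cocenter, and correspondingly the paper decomposes $\Rk_G(\varPi)$ using the full $\aleph$-indexed Newton decomposition of $\bar H_R$, which is a direct sum there but not in $\bar H^{\red}_R$ (the sum of the $\bar H_R^{\red}(\nu)$, $\nu\in\aleph$, need not be direct). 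Working reduced therefore also removes the Kottwitz-factor bookkeeping that your final step depends on.

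Second, your justification of the key vanishing is incorrect as stated: it is not true that a non-central $v$ admits no $M$-dominant conjugate $v'$ with $M\subseteq M_{v'}$; for $M=M_0$ every $v'$ is central in $M_0$, so the constraint coming from $\bar H(M_0)^{\rig}$ is vacuous and your ``contradiction'' evaporates. What actually closes the induction in the paper is the stronger inductive containment $\bar r_{M,R}\Rk_G(\varPi)\in\bar H_R(M)^{c}$ (central Newton factor \emph{and} finite-order Kottwitz factor --- you mention this refinement parenthetically but never use it), combined with the fact that the map $\aleph_M\to\aleph$ carries the compact strata of $M$ into $\aleph_c$, since a compact element of $M$ is compact in $G$ and $M(\nu')\subseteq G(\bar\nu')$. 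Together with Proposition \ref{r-N} and the directness of the Newton decomposition of $\bar H_R(M)$, this gives $\bar r_{M,R}(f_\nu)=0$ for every $\nu\notin\aleph_c$ and every proper $M$, and the paper then concludes with Theorem \ref{main'}; all strata outside $\aleph_c$ are handled inside this one componentwise argument, with no unramified twists. Your last ingredient, the invariance $\Rk_R(\pi\otimes\psi)=\Rk_R(\pi)$, is Dat's characteristic-zero theorem on the triviality of the unramified action on $K_0$, proved with Bernstein-decomposition tools that are not available mod $l$; as you yourself concede, you cannot supply it here, so your passage from ``compact-modulo-center'' to ``compact'' remains a conditional sketch rather than a proof.
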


\begin{proof}
We argue by induction on the semisimple rank of $G$. 

If the semisimple rank of $G$ is $0$, then $G$ is compact modulo center. The statement is easy to prove in this case. 

Now we assume that the semisimple rank of $G$ is positive and that the statement holds for all the proper Levi subgroups of $G$ (which have smaller semisimple rank). Let $\varPi \in \fkK_R(G)$. By Frobenius reciprocity, we have that $r_{M, R}(\varPi) \in \fkK_R(M)$ for all Levi subgroup $M$. By the compatibility of restriction with rank map, we have $\Rk_M r_{M, R}(\varPi)=\bar r_{M, R} \Rk_G(\varPi) \in \bar H_R(M)$. By the inductive hypothesis, $\bar r_{M, R} \Rk_G(\varPi) \in \bar H_R(M)^c$ for all proper Levi $M$. 

We write $\Rk_G(\varPi)$ as $\Rk_G(\varPi)=\sum_{\nu \in G} f_{\nu}$, where $f_{\nu}$ is the corresponding Newton component of $\Rk_G(\varPi)$. Then by the Newton decomposition on $\bar H_R(M)$ and Proposition \ref{r-N}, if $\nu \notin \aleph_c$, then $\bar r_{M, R} f_{\nu}=0$ for all proper Levi $M$. Now by Theorem \ref{main'}, we have $f_{\nu}=0$ for $\nu \notin \aleph_c$. Therefore $\Rk_G(\varPi) \in \bar H_R^c$. 
\end{proof}

\subsection{} We make some comments about the abstract Selberg principle. The classical statement \cite{BB} of the abstract Selberg principle is as follows.

Let $f \in \bar H_\BC$ be the image of the rank map of a finitely generated projective representation of $G$ over $\BC$. Then the orbital integral of $f$, relative to a non-compact element of $G$ (in case $\text{char}(F)=0$) and a non-compact semisimple element of $G$ (in case $\text{char}(F)>0$), vanishes. 

A subtle issue here is that if $\text{char}(F)>0$, then it is not known in general whether the orbital integral of an unipotent element converges, although most cases are settled by McNinch in \cite{Mc}. 

The statement we have in this paper is somehow different. The statement says that the element $f \in \bar H_R$ (for arbitrary algebraically closed field $R$ of characteristic not equal to $p$) can be represented by a function on $G$ supported in compact elements. In characteristic $0$, or more generally, if the characteristic $l$ of $R$ is not in a certain finite set of primes $P$ \cite[Th\'eor\`eme C.1]{VW}, it is known that the following two conditions are equivalent: 

(1) An element $f$ of the cocenter is represented by a function supported in compact elements;

(2) The orbital integral of any regular semisimple non-compact element on $f$ vanishes. 

It is obvious that (1) implies (2).  The equivalence of (1) and (2) follows from the geometric density theorem, which is known under the assumption that $l\notin P$ \cite[Th\'eor\`eme C.2]{VW}. We remark that the exact set $P$ is not known in general, and that it is not sufficient to assume that $l$ is banal in the sense of \cite[II.3.9]{Vig}. This is related to the question of characterization of the primes that divide the Assem number of orbital integrals, see \cite[section B]{VW}.

\appendix 

\section{A different proof in characteristic zero}\label{appendix}

In the appendix, we give a different proof that the elliptic reduced cocenter is contained in the rigid reduced cocenter, under the assumption that $F$ is of characteristic $0$. 
The proof is based on Clozel's integration formula \cite[Proposition 1]{clozel}.

\subsection{} We recall several foundational results regarding characters of admissible representations motivated by \cite[Proposition 1]{clozel}. Let $G_{\text{rs}}$ denote the set of regular semisimple elements of $G$. A (weak) $R$-analogue of the classical result of Harish-Chandra, see \cite[E.3.4.4]{VW}, says that for each admissible $G$-representation $(\pi,V)$, there exists a locally constant function 
\[\tr_V: G_{\text{rs}}\to R,
\]
the character of $(\pi,V)$, such that, for every $f\in \bar H$,
\[\Tr_R(f,V)=\int_{G_{rs}} \tr_V(g) f(g) dg.
\]
For every semisimple element $s\in G$, recall the parabolic (Deligne's parabolic) $P_s=M_s N_s$ contracted by $s$, see \cite{deligne,cass}.
Let $G_c$ be the ``compact part'' of $G$ as defined in \cite[\S 1]{clozel}, i.e. the set of semisimple elements $s$ such that $P_s=G$. Then $G_c$ is the set of semisimple elements in $G^\rig$. We first have the generalization of Casselman's formula \cite{cass}, see \cite[II.3.7]{Vig} and \cite[Theorem 7.4]{MS}:

\begin{theorem}For every admissible $G$-representation $(\pi,V)$ and $s\in G_{\text{rs}}$, $$\tr_V(s)=\tr_{r_{M_s}(\pi)}(s).$$ 
\end{theorem}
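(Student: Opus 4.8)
The plan is to recognize this statement as Casselman's character formula in the general form due to Deligne. I would prove it by combining Casselman's classical result for elements that strictly contract a unipotent radical with Deligne's construction of the parabolic $P_s$ (which is designed precisely so that $s$ is such an element for $P_s$), and then observing that the whole argument is insensitive to whether the coefficients are $\BC$ or $R$.

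First I would recall the classical input (\cite{cass}; over a general $R$, see \cite[II.3.7]{Vig} and \cite[Theorem 7.4]{MS}): if $Q=LU$ is a parabolic subgroup of $G$ and $m\in L$ is such that $\Ad(m)$ acts topologically nilpotently on $\Lie U$, then for every admissible $R$-representation $(\pi,V)$ one has $\tr_V(m)=\tr_{r_L(\pi)}(m)$, where $r_L$ denotes the Jacquet functor along $U$. The mechanism is Jacquet--Casselman's lemma: for a compact open subgroup $\CK$ small enough and decomposed as $(\CK\cap U^-)(\CK\cap L)(\CK\cap U)$ relative to $Q$, the trace of $\pi(e_{\CK m\CK})$ on $V^{\CK}$ equals the trace of the operator it induces on $(V_U)^{\CK\cap L}$, the point being that the strict contraction of $U$ by $\Ad(m)$ annihilates the complementary directions; letting $\CK$ shrink to the identity then gives $\tr_V(m)=\tr_{r_L(\pi)}(m)$. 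This argument uses only admissibility and the Bruhat--Tits combinatorics, hence remains valid over any $R$ with $\charac R\neq p$.

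Next I would bring in Deligne's construction \cite{deligne,cass}: for a semisimple $s\in G$ one decomposes $\Lie G$ into the generalized $\Ad(s)$-eigenspaces and lets $\Lie M_s$ (resp.\ $\Lie N_s$) be the span of those on which $\Ad(s)$ has eigenvalue of absolute value $1$ (resp.\ $>1$), giving $P_s=M_sN_s$ normalized precisely so that $s\in M_s$ and $\Ad(s)$ is topologically nilpotent on $\Lie N_s$. For $s\in G_{\text{rs}}$, $C_G(s)$ is a maximal torus of $G$ lying inside $M_s$, so $s$ is regular semisimple in $M_s$ as well, $\tr_{r_{M_s}(\pi)}(s)$ is meaningful, and $r_{M_s}(\pi)$ is again admissible; applying the classical input with $(Q,L,m)=(P_s,M_s,s)$ then yields $\tr_V(s)=\tr_{r_{M_s}(\pi)}(s)$. (When $s$ is compact one has $P_s=G$ and the identity is a tautology, a convenient sanity check.) The step that requires genuine care is matching conventions between the two inputs: a priori $s$ satisfies only $|\alpha(s)|\le 1$ with respect to $P_s$, and one must use that this inequality is strict on the roots of $N_s$ while the equality locus is exactly $\Lie M_s$ --- this is what makes Casselman's lemma apply to $s$ itself, and it pins down that the functor in the formula is $r_{M_s}$ along $N_s$ rather than the opposite one. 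The mod-$l$ points --- that $\tr_V$ is a genuine locally constant function on $G_{\text{rs}}$ (recorded just above the statement), that $r_{M_s}(\pi)$ is admissible, and that Jacquet's lemma holds with $R$-coefficients --- are exactly those addressed in \cite[II.3.7]{Vig} and \cite[Theorem 7.4]{MS}; alternatively, since $\tr_V$ and $r_{M_s}$ are defined over $\BZ[\tfrac{1}{p}]$ and compatible with extension of scalars, the identity over $R$ can be deduced from the one over $\BC$.
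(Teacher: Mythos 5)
The paper gives no proof of this theorem: it is quoted as Casselman's formula in Deligne's generality, with the mod-$l$ case supplied by \cite[II.3.7]{Vig} and \cite[Theorem 7.4]{MS}, and your sketch (Jacquet--Casselman's lemma for an element contracting the unipotent radical, applied to Deligne's parabolic $P_s$, plus the observation that the argument only uses admissibility and hence works over $R$) is exactly the standard argument behind those citations. One sign slip to fix: you define $\Lie N_s$ by the generalized eigenvalues of absolute value $>1$ and in the same breath say $\Ad(s)$ is topologically nilpotent there --- for consistency with your stated classical input and with the paper's ``contracted by $s$'', $\Lie N_s$ must be the part where the eigenvalues have absolute value $<1$.
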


\subsection{} Let $T\subset G$ be a Cartan subgroup and let $W(G,T)$ is the Weyl group of $G$ with respect to $T$. For every $t\in T\cap G_{\text{rs}}$ and $f\in \bar H$, let 
\begin{equation}
\mathcal O_t(f)=\int_{G/T} f(x t x^{-1}) d(xT)
\end{equation}
denote the orbital integral. Suppose $T\subset M$. Then for $t\in T\cap G_{\text{rs}}$, we have the known descent formula for orbital integrals:
\begin{equation}
\mathcal O_t(f)=\mathcal O_t^M(\bar r_M(f)),
\end{equation}
where $\mathcal O_t^M$ is the orbital integral taken in $M$.

We will use a variant of  Weyl's integration formula as in \cite[page 241]{clozel}. Since $T\cap G_{\text{rs}}$ is totally disconnected with a free action of $W(G,T)$, one may find an open and closed subset $T^0_{\text{rs}}$ of  $T\cap G_{\text{rs}}$ such that $T\cap G_{\text{rs}}=\bigsqcup_{w\in W(G,T)} wT^0_{\text{rs}}$. Then the morphism
\[G/T\times T^0_{\text{rs}}\to G_{\text{rs}},\ (gT,t)\mapsto \text{Ad}(g)t
\]
is one-to-one. This is because $\text{Ad}(g)t=t'$ implies $\text{Ad}(g)T=T$ hence there exists $w\in W(G,T)$ such that $\text{Ad}(g)=w$ as homomorphisms of $T$. But since $T^0_{\text{rs}}$ is a fundamental domain for the action of $W(G,T)$, it follows that $w=1$. Then the form of Weyl's integration formula for a locally constant, compactly supported function $F$ takes the form:
\[
\int_G F(g) dg=\sum_{T\subset G/_\sim} \int_{T^0_{\text{rs}}} |D_G(t)| \int_{G/T} F(x t x^{-1}) d(xT) dt.
\]
Here $T$ runs over a set of representatives of Cartan subgroups of $G$ (modulo $G$-conjugation) and $D_G(t)$ is the coefficient of $x^{\text{rank}(G)}$ in $\det(x+1-\text{Ad}(t))$. 
If $f\in \bar H_R$, specialize $F=f\cdot \tr_V$, which is compactly-supported locally constant (since $\tr_V$ is locally constant on $G_{\text{rs}}$), and arrive at the formula  \cite[page 241]{clozel}:
\begin{equation}\label{Weyl}
\Tr_R(f,V)=\sum_{T\subset G/_\sim} \int_{T^0_{\text{rs}}} |D_G(t)| \tr_V(t) \mathcal O_t(f) dt.
\end{equation}
Following \cite[page 240]{clozel}, denote 
\[\Tr_{R, c}(f,V)=\int_{G_c\cap G_{\text{rs}}} f(g) \tr_V(g) dg.
\]

Now we can show that the elements of $\bar A_R^{\cat}(\bar H)$ are essentially supported on $G^{\text{rig}}$.

\begin{proposition}\label{p-rig} For every $f\in\bar A_R^{\cat}(\bar H)$ and $(\pi,V)$ a smooth admissible $G$-representation, 
\[\Tr_R(f,V)=\Tr_{R, c}(f,V).\]
\end{proposition}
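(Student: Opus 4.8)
The plan is to compute $\Tr_R(f,V)$ by Weyl's integration formula and to show that it differs from $\Tr_{R,c}(f,V)$ only through the contributions of the non-compact regular semisimple conjugacy classes, each of which vanishes because it is controlled by $\bar r_{M}(f)$ for a proper Levi subgroup $M$, and such a restriction of an element of $\bar A_R^{\cat}(\bar H)$ is annihilated by every character of $M$.

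First I would invoke (\ref{Weyl}) to write
\[\Tr_R(f,V)=\sum_{T\subset G/_\sim}\int_{T^0_{\text{rs}}}|D_G(t)|\,\tr_V(t)\,\mathcal O_t(f)\,dt .\]
Since $G_c$ is conjugation-stable (and, as used by Clozel, $G_c\cap G_{\text{rs}}$ is open and closed in $G_{\text{rs}}$), applying the same integration formula to $f\cdot\tr_V\cdot\mathbf 1_{G_c}$ and using $\tr_V(gtg^{-1})=\tr_V(t)$ identifies the ``compact'' trace as the partial sum
\[\Tr_{R,c}(f,V)=\sum_{T\subset G/_\sim}\int_{T^0_{\text{rs}}\cap G_c}|D_G(t)|\,\tr_V(t)\,\mathcal O_t(f)\,dt .\]
Subtracting, it remains to prove that $\mathcal O_t(f)=0$ for every regular semisimple non-compact $t$, i.e. whenever the Deligne parabolic $P_t=M_tN_t$ is proper.

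For such a $t$, after replacing it by a $G$-conjugate (which changes neither $\mathcal O_t(f)$ nor the compactness of $t$) I may assume $M_t$ is a standard proper Levi subgroup $M$, and then the descent formula for orbital integrals gives $\mathcal O_t(f)=\mathcal O_t^{M}(\bar r_{M}(f))$. Writing $f=\bar A_R^{\cat}(h)$ with $h\in\bar H$, the identity $A_R\circ i_{M,R}=0$ on $\fkR_R(M)$ from Proposition \ref{A^2}, together with the adjunction (\ref{e:dual}) and Theorem \ref{cat-r}, yields $\Tr^M_R(\bar r_{M}(f),\sigma)=\Tr^G_R(h,A_R(i_{M,R}(\sigma)))=0$ for all $\sigma\in\fkR_R(M)$. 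Feeding this into Weyl's integration formula for $M$ in place of $G$ then forces $\mathcal O_t^{M}(\bar r_{M}(f))=0$ on the regular semisimple set, hence $\mathcal O_t(f)=0$, and the two displayed sums coincide, giving $\Tr_R(f,V)=\Tr_{R,c}(f,V)$.

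The hard part will be this last transfer: passing from ``$\bar r_M(f)$ has vanishing trace against every smooth admissible representation of $M$'' to ``$\bar r_M(f)$ has vanishing regular semisimple orbital integrals''. This is exactly the density theorem for $M$ (Kazhdan over $\BC$, and its $R$-analogue in characteristic $0$), and it is the only place where the hypothesis $\charac F=0$ enters. A subsidiary technical point to check is the local constancy of $t\mapsto\mathcal O_t(f)$ on $G_{\text{rs}}$ and the openness and closedness of $G_c\cap G_{\text{rs}}$ there, which guarantee that all the integrals above — and the partial Weyl integration identifying $\Tr_{R,c}$ — are legitimate.
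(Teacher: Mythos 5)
Your skeleton---Weyl's integration formula, identification of $\Tr_{R,c}(f,V)$ with the partial integral over the compact part, and elimination of the non-compact contributions through the descent formula $\mathcal O_t(f)=\mathcal O_t^{M}(\bar r_{M}(f))$---is the same route the paper takes (there organized via the Deligne stratification of the Cartan subgroups, which also disposes of your technical point that $G_c\cap G_{\text{rs}}$ is open and closed in $G_{\text{rs}}$). The genuine gap is in your last step. The paper uses that for $f$ in the image of $\bar A_R^{\cat}$ one has $\bar r_M(f)=0$ \emph{in the cocenter} $\bar H(M)$ for every proper Levi $M$, so the descent formula kills $\mathcal O_t(f)$ outright, with no spectral input over $R$ whatsoever. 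You instead extract only the weaker statement $\Tr^M_R(\bar r_M(f),\sigma)=0$ for all $\sigma\in\fkR_R(M)$ (that derivation via $A_R\circ i_{M,R}=0$, (\ref{e:dual}) and Theorem \ref{cat-r} is fine) and then try to upgrade trace-vanishing to vanishing of regular semisimple orbital integrals by ``the density theorem for $M$ \dots and its $R$-analogue.'' That upgrade is precisely a spectral/geometric density theorem over the \emph{coefficient field} $R$, and it is not available in the stated generality: the proposition (and the point of the whole paper) concerns arbitrary algebraically closed $R$ with $\charac R=l\neq p$; the paper warns right after Theorem \ref{SDT} that the spectral density theorem may fail when $\charac R>0$, and the geometric density theorem is known only for $l$ outside a finite, not explicitly determined, set of primes (\cite[Th\'eor\`eme C.2]{VW}; see the discussion at the end of Section 6). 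So your argument only proves the proposition for $\charac R=0$ (or for good $l$), not as stated.

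Relatedly, you misplace where $\charac F=0$ enters: it is not consumed by a density theorem for $M$, but by the Clozel--Harish-Chandra framework underlying the appendix (characters as locally constant functions on $G_{\text{rs}}$, Clozel's form of the Weyl integration formula, the Deligne stratification). The repair is to use the hypothesis on $f$ at full strength, as the paper does: membership in $\bar A_R^{\cat}(\bar H)$ (equivalently, in the elliptic cocenter in the intended application) gives $\bar r_M(f)=0$ in $\bar H(M)_R$ itself, and then $\mathcal O_t(f)=\mathcal O_t^M(\bar r_M(f))=0$ for every non-compact $t\in G_{\text{rs}}$ with no density statement over $R$ needed anywhere.
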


\begin{proof} 
Following the proof of \cite[Proposition 1]{clozel}, rewrite (\ref{Weyl}) according to the Deligne stratification of Cartan subgroups $T=\cup S$, where $t,t'\in T$ are in the same stratum $S$ if $P_t=P_{t'}$. In this case, write $P_S=P_t$ for $t\in S$:
\[ \Tr_R(f,V)=\sum_{S\subset G/_\sim} I(S),\quad I(S)=\int_{S\cap G_{\text{rs}}/_\sim} |D_G(t)| \tr_V(t) \mathcal O_t(f) dt.
\]
Since $f\in\bar A_R^{\cat}(\bar H),$ we have $\bar r_M(f)=0$ for all proper Levi subgroups $M$ of $G$. The descent formula for orbital integrals implies therefore that if $T\subset M$, then $\mathcal  O_t(f)=0$ for all $t\in T\cap G_{\text{rs}}.$ This means that
\[ \Tr_R(f,V)=\sum_{S\subset G_c/_\sim} I(S),\]
and for each such $S$, $P_S=G$. The claim follows.
\end{proof}

\subsection{} Now we give an alternative proof that $\bar H_R^{\el, \red} \subset \bar H_R^{\rig, \red}$ under the assumption that $F$ is of characteristic $0$.

Let $f \in \bar H^{\el, \red}$. 
Recall that $\bar H^{\rig, \red}_R=\bigoplus_{v \in V_+; M_v=G}\bar H^{\red}_R(v)$ and set  $\bar H^{\text{nrig}, \red}_R=\bigoplus_{v \in V_+; M_v\neq G}\bar H^{\red}_R(v)$. Using Theorem \ref{t:newton}, we write $f$ as $f=f_\rig+f_{\text{nrig}}$, where $f_\rig \in \bar H^{\rig, \red}$ and $f_{\text{nrig}} \in \bar H^{\text{nrig}, \red}$.  Since $G_c \subset G^\rig$, we have \begin{align*} \Tr_R(f, V) & = \Tr_{R, c}(f,V)= \Tr_{R, c}(f_\rig,V)+\Tr_{R, c}(f_{\text{nrig}},V)\\
&=\Tr_{R, c}(f_\rig,V)=\Tr_R(f_\rig,V).
\end{align*} 
Here the first equality follows from Proposition \ref{p-rig}, the second equality follows from the fact that the support of $f_{\text{nrig}}$ does not intersects $G_c$ and the third equality follows from the fact that the support of $f_\rig$ is contained in $G^\rig$. 

Therefore $f-f_\rig \in \ker \Tr_R$.

\end{document}